\definecolor{med-gray}{gray}{0.5}
\definecolor{gray1}{gray}{0.85}
\definecolor{gray2}{gray}{0.90}
\definecolor{gray3}{gray}{0.64}
\definecolor{gray4}{gray}{0.55}
\definecolor{verylight-yellow}{rgb}{1,1,0.7}
\definecolor{yellow}{rgb}{1,1,0.2}
\definecolor{vivid-blue}{rgb}{0.2,0,1}
\definecolor{light-pink}{rgb}{1,0.8,1}
\definecolor{med-pink}{rgb}{1,0.6,1}
\definecolor{aqua}{rgb}{0.0, 1.0, 1.0}
\definecolor{light-gray}{rgb}{0.5, 0.9, 0.5}
\theoremstyle{definition}
\theoremstyle{plain}
\newtheorem{theorem}{Theorem}[section]
\newtheorem{corollary}[theorem]{Corollary}
\newtheorem{lemma}[theorem]{Lemma}
\theoremstyle{definition}
\newtheorem{definition}[theorem]{Definition}
\newtheorem{remark}[theorem]{Remark}
\newtheorem{example}[theorem]{Example}
\newtheorem{question}[theorem]{Question}
\newtheorem*{ack}{Acknowledgment}
\newtheorem{claim}[theorem]{Claim}
\newtheorem{thm}{Theorem}
\newtheorem{note}[theorem]{Note}
\numberwithin{equation}{section}
\numberwithin{table}{section} 
\DeclareMathOperator{\Gor}{Gor}
\DeclareMathOperator{\Grass}{\rm{Grass}}
\newcommand{\AAA}{\mathcal{A}}
\newcommand{\D}{\mathbf{D}}
\newcommand{\Hom}{\ensuremath{\mathit{Hom}}}
\newcommand{\kk}{\ensuremath{\mathsf{k}}}
\newcommand{\maxA}{\ensuremath{\mathfrak{m}_{A}}}
\newcommand{\maxR}{\ensuremath{\mathsf{m}_R}}
\DeclareMathOperator{\Ann}{\mathrm{Ann}}
\def\F{\mathcal{F}}
\def\Gr{\mathrm{Gr}}
\def\cha{\mathrm{char}\ }
\def\Hom{\mathrm{Hom}}
\def\Pgl{\mathrm{PGL}}
\def\Hilb{\mathrm{Hilb}}
\def\Grass{\mathrm{Grass}}
\def\<{\left<}
\def\>{\right>}
\def\B{\mathcal{B}}
\def\C{\mathcal{C}}
\def\ns{\footnotesize \it}
\def\max{\mathrm{max}}
\def\GGor{\mathrm{GGor}}
\def\Gr{\mathrm{Gr}}
\def\Gor{\mathrm{Gor}}
\def\A{A}
\def\D{\mathcal{D}}
\def\m{\mathfrak{m}}
\newlength\distvertices\setlength{\distvertices}{1cm}
\title{Reducibility of a family of local Artinian Gorenstein algebras\footnote{\textbf{Keywords}: Artinian Gorenstein, local algebra, symmetric decomposition,  components,  deformation, Hilbert function, Jordan type,  Lefschetz property, Macaulay dual generator,  parametrization. \textbf{2020 Mathematics Subject Classification}: Primary: 13H10;  Secondary: 13E10, 13M05, 14B07, 14C05}}
\author{
Anthony Iarrobino\\[.05in]
{\ns Department of Mathematics, Northeastern University, Boston, MA 02115,
USA.
}\\[.2in] Pedro Macias Marques\\[.05in]
{\ns Departamento de Matem\'{a}tica, Escola de Ci\^{e}ncias e Tecnologia, Centro de Investiga\c{c}\~{a}o}\\[-.05in]
{\ns  em Matem\'{a}tica e Aplica\c{c}\~{o}es, Instituto de Investiga\c{c}\~{a}o e Forma\c{c}\~{a}o Avan\c{c}ada,}\\[-.05in]
{\ns Universidade de \'{E}vora, Rua Rom\~{a}o Ramalho, 59, P--7000--671 \'{E}vora, Portugal}\\
}
\date{December 27, 2021}
\begin{document}

\maketitle
\begin{abstract}
The Jordan type of an Artinian algebra is the Jordan block partition associated to multiplication by a generic element of the maximal ideal.  We study the Jordan type for Artinian Gorenstein (AG) local (non-graded) algebras $\A$, and the interaction of Jordan type with the symmetric decomposition of the Hilbert function $H(\A)$. We give  examples of Gorenstein sequences $H$ for which the family $\Gor(H)$ of AG algebras having Hilbert function $H$ has three irreducible components, each corresponding to a symmetric decomposition of $H$.  The component structure results from the intersection of two opposing filtrations of the family $\Gor(H)$ of AG algebras: that by Jordan type satisfies the usual dominance property; the second filtration, by symmetric decomposition, satisfies a known semicontinuity property. Our examples are in codimension three -- the lowest codimension of such an example, as $\Gor(H)$ is irreducible in codimension two.
We also compare the Jordan type of the AG algebra $\A$ and of its associated graded algebra $\A^\ast$, showing that they can be different. We explore a problem of realizability of partial symmetric decompositions.
\end{abstract}
\tableofcontents
\section{Introduction.}
Let $R={\mathsf{k}}\{x_1,\ldots, x_r\}$ be the regular local ring in $r$ variables over an infinite field $\mathsf{k}$; for simplicity we will assume in this paper that $R$ has standard grading.
We assume that $A$ is an Artinian Gorenstein (AG) local algebra $\A=R/I$ with maximal ideal $\maxA$ (or $\m$, when $A$ is understood) of socle degree $j=j(\A)$, such that $\A/\mathfrak{m}=\mathsf{k}$. Recall that the Hilbert function $H(\A)$ satisfies $H(\A)=(h_0,h_1,\ldots,h_j)$ where $h_i=H(\A)_i=\dim_{\mathsf{k}}(\m^i/\m^{i+1})$ and $h_j>0$.  We will say that $H=(1,r,\ldots , 1_j)$ is a \emph{Gorenstein sequence} of codimension $r$ if there is an AG local algebra $\A$, quotient of $R$ such that $H(\A)=H$. The algebras we study are in general non-homogeneous and our Gorenstein sequences $H$ will in general not be symmetric.\footnote{In the literature, a Gorenstein sequence $H$ is often assumed to be symmetric, in which case there is a homogeneous AG algebra with Hilbert function $H$, as $H(A)$ symmetric implies $A^\ast$ is AG (\cite[Proposition~9]{Wat} or \cite[Proposition 1.7]{I2}).} For $H=(1,r,\ldots, 1)$ a Gorenstein sequence, we will denote by $\Gor(H)$ the reduced variety whose closed points are the Gorenstein quotients $\A$ of $R$ having Hilbert function $H$.  In this paper we exhibit a new method for showing that certain
varieties $\Gor(H)$ for $H$ of codimension three, have several irreducible components -- each corresponding to a \emph{symmetric decomposition} of $H$.\par
\subsection{Basic Notions.}
The first author showed that the associated graded algebra $\A^\ast=\Gr_{\mathfrak{m}}(\A)$ has a filtration by ideals $C_{\A}(a)$ whose successive quotients $Q_A(a)$ are reflexive ${\kk}$ modules. We first define these ideals and subquotients of $A^\ast$, that we term the \emph{symmetric decomposition} for $A$. This is our first basic ingredient.
\begin{definition}[Symmetric Decomposition]\label{SymmDecompdef}Let $\A$ be an AG algebra. 
We define the ideal $C(a)\subset \A^\ast$ by
\begin{equation}
C(a)_i= \m^{\,i}\cap (0:\m^{\,j+1-a-i})/\bigl( \m^{\,i+1}\cap 
(0:\m^{\,j+1-a-i})\big).
\end{equation}
We let $Q(a)_i=C(a)_i/C(a+1)_{i}$.
\end{definition}
Then
\begin{equation}\label{Qaeqn}
Q(a)_i\cong
\frac{\m^{\,i}\cap (0:\m^{\,j+1-a-i})}
{\m^{\,i}\cap (0:\m^{\,j-a-i})+
\m^{\,i+1}\cap (0:\m^{\,j+1-a-i})}.
\end{equation}
Recall from \cite[Theorem 1.5]{I2} or \cite[\S 1.1]{IM}, 
\begin{lemma}[Symmetric Decomposition of $\A^\ast$]\label{symmdecomplem}
Let $\A$ be an Artinian Gorenstein algebra, then we have a filtration
\begin{equation}\label{strataeq}\A^\ast=C_{\A}(0)\supset C_{\A}(1)\supset C_{\A}(2)\supset\cdots  \supset C_{\A}(j-2)\supset C_{\A}(j-1)= 0,
\end{equation}
whose successive quotients $Q_{\A}(a)=C_{\A}(a)/C_{\A}(a+1)$, satisfy, $Q_{\A}(0)$ is a standard-graded Artinian Gorenstein algebra, while each $Q_{\A}(a)$, $a\ge 1$ satisfies 
$\Hom_{\kk}\bigl(Q(a)_i,\kk\bigr)\cong Q(a)_{j-a-i}$, that is, the double dual of $Q(a)$ as graded $\kk$-module is isomorphic to $Q(a)$, after the shift; the center of symmetry of $Q(a)$ is $(j-a)/2$. \footnote{We will show in Example \ref{non-reflexex} that $Q(a)$ is not necessarily reflexive as an $A^\ast$-module.}
\end{lemma}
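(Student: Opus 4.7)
The plan is to combine two tools: elementary manipulations of the interaction between the $\m$-adic filtration and the annihilator filtration $\{0:\m^k\}_k$, and the nondegenerate Macaulay pairing on the Artinian Gorenstein algebra $\A$. First I would verify the filtration (\ref{strataeq}). The inclusion $C(a+1)_i\subseteq C(a)_i$ inside $A^\ast_i$ follows, termwise in the numerator and denominator of Definition~\ref{SymmDecompdef}, from $(0:\m^{j-a-i})\subseteq (0:\m^{j+1-a-i})$, which itself holds because $\m^{j-a-i}\supseteq\m^{j+1-a-i}$. For the endpoints: since $\A$ has socle degree $j$, we have $\m^{j+1}=0$, so $\m^i\subseteq(0:\m^{j+1-i})$ for all $i$, giving $C(0)_i=A^\ast_i$; and $C(j-1)=0$ follows from a direct check using that $\Soc(\A)=\kk$ sits in socle degree $j$.

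For the duality of $Q(a)$ with $a\ge 1$, I would fix a linear functional $\ell:\A\to\kk$ whose kernel contains no nonzero ideal of $\A$ (equivalently, $\ell$ is nonzero on $\Soc(\A)$), and consider the nondegenerate symmetric pairing $\langle x,y\rangle:=\ell(xy)$ on $\A$. Its essential property is the annihilator identity $(\m^k)^\perp=(0:\m^k)$, an immediate consequence of nondegeneracy. Combined with the general formula $(S\cap T)^\perp=S^\perp+T^\perp$, this yields
\begin{equation*}
\bigl(\m^i\cap(0:\m^{j+1-a-i})\bigr)^\perp = (0:\m^i)+\m^{j+1-a-i},
\end{equation*}
and analogous expressions for each of the four subspaces occurring in the numerator and denominator of \eqref{Qaeqn}. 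Dualising the quotient \eqref{Qaeqn} under $\langle-,-\rangle$ and reducing the resulting expression via the Dedekind modular law identifies it with the defining formula for $Q(a)_{j-a-i}$, yielding the canonical isomorphism $\Hom_{\kk}(Q(a)_i,\kk)\cong Q(a)_{j-a-i}$; in particular the centre of symmetry of $Q(a)$ is $(j-a)/2$.

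Finally, to see that $Q(0)$ is a graded Artinian Gorenstein algebra, I would identify it under Macaulay duality with $R/\Ann(F_j)$, where $F_j$ is the leading homogeneous component of a Macaulay dual generator $F=F_j+F_{j-1}+\cdots$ for $\A$; this presentation makes it manifest that $Q(0)$ is graded AG of socle degree $j$, with duality coming from the socle pairing of $A$ restricted to this subquotient. The main obstacle in this plan is the duality computation of paragraph two: faithfully tracking the four sums of annihilator ideals and $\m$-powers that arise when dualising \eqref{Qaeqn}, and reducing them via the modular law to match the target subquotient term by term. This bookkeeping must be done with care to keep indices aligned, but is a routine verification once the annihilator identity $(\m^k)^\perp=(0:\m^k)$ is in hand.
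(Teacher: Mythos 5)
The paper does not prove Lemma \ref{symmdecomplem}: it recalls it from \cite[Theorem 1.5]{I2} and \cite[\S 1.1]{IM}. Your sketch is essentially the classical argument of those sources (the exact pairing $\langle x,y\rangle=\ell(xy)$ with $(\m^k)^\perp=(0:\m^k)$), and it is correct; in particular the step you flag as the main obstacle does go through. Writing $i'=j-a-i$, dualising \eqref{Qaeqn} gives the quotient with numerator $\bigl((0:\m^i)+\m^{i'}\bigr)\cap\bigl((0:\m^{i+1})+\m^{i'+1}\bigr)$ and denominator $(0:\m^i)+\m^{i'+1}$. The modular law, applied with $(0:\m^i)\subseteq(0:\m^{i+1})$, shows the numerator equals $\bigl(\m^{i'}\cap(0:\m^{i+1})\bigr)+(0:\m^i)+\m^{i'+1}$, and a direct check gives $\bigl(\m^{i'}\cap(0:\m^{i+1})\bigr)\cap\bigl((0:\m^i)+\m^{i'+1}\bigr)=\m^{i'}\cap(0:\m^{i})+\m^{i'+1}\cap(0:\m^{i+1})$; the second isomorphism theorem then identifies $\Hom_{\kk}\bigl(Q(a)_i,\kk\bigr)$ with $Q(a)_{i'}$, exactly as you claim, with the centre of symmetry $(j-a)/2$ falling out of the index bookkeeping.

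Two small points should be added to make the write-up complete. First, you nowhere check that the $C(a)$ are ideals of $\A^\ast$ (equivalently that each $Q(a)$ is an $\A^\ast$-module); this is needed for $Q(0)=\A^\ast/C_\A(1)$ to be an algebra at all, and it is a one-line verification: $\m\cdot\bigl(\m^i\cap(0:\m^{j+1-a-i})\bigr)\subseteq\m^{i+1}\cap(0:\m^{j+1-a-(i+1)})$, so $\A^\ast_1\cdot C(a)_i\subseteq C(a)_{i+1}$. Second, your last paragraph invokes the identification $Q(0)\cong R/\Ann F_j$, which is itself part of the recalled Macaulay-duality package (Lemma \ref{Macaulalyduallem}); either cite it explicitly or, more economically, observe that your $a=0$ duality isomorphism is induced by the multiplication of $\A$, so it descends to a perfect pairing $Q(0)_i\times Q(0)_{j-i}\to Q(0)_j\cong\kk$, which already shows that the graded algebra $Q(0)$ is Artinian Gorenstein of socle degree $j$ without any appeal to dual generators.
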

Letting $H_{\A}(a)$ be the Hilbert function of $Q_{\A}(a)$, we define the \emph{symmetric decomposition} $\mathcal{D}(\A)$ of the Hilbert function $H(\A)$ as the sequence
\begin{equation}\label{decompeqn}
\mathcal{D}(\A)=\bigl(H_{\A}(0),H_{\A}(1),\ldots,H_{\A}(j-2) \bigr).
\end{equation}
This symmetric decomposition has recently been used by several groups, particularly in the study of short or special Artinian Gorenstein algebras, and in the study of the scheme lengths of forms: see \cite{BJMR,ER1,ER2,Je} and the discussion and references in \cite{IM}. Writing $\mathsf{m}$ for the maximal ideal of $R$, we denote by $\Gor(H)\subset \Grass(n,R/\mathsf{m}^n)$, the family (an algebraic set) of AG algebra quotients $I\to R\to \A=R/I$ of $R$ having Hilbert function $H$: thus, a point $p_\A$ of $\Gor(H)$ specifies the ideal $I\subset R$.  We denote by $\Gor(\D)\subset \Gor(H)$ the subfamily of  $\Gor(H)$ parametrizing AG algebras whose associated graded algebras have symmetric decomposition $\D$.\vskip 0.2cm   
Our next basic ingredient is the Jordan type $P_{\ell,A}$ for a pair consisting of an Artinian local algebra $A$ and an element $\ell\in \m_A$. This was introduced in 
\cite[\S 3.5]{H-W} and in \cite[Definition 2.1, Section~2.2]{IMM1}, as a generalization of the much-studied concept of $A$ being weak or strong Lefschetz (see \cite{H-W}).      
\begin{definition}
\label{JTdef}
\cite{IMM1}, \cite[\S 3.5]{H-W} The \emph{Jordan type} $P_\ell$ of an element  $\ell\in\m$ is the partition giving the Jordan block decomposition of the multiplication map $m_\ell: \A\to \A$, $m_\ell (a)=\ell \cdot a$. Since $\ell$ is nilpotent, the eigenvalues of $m_\ell$ are zero. We will say that $(\A,\ell)$ is a \emph{strong Lefschetz pair} if the Jordan type $P_\ell=H(\A)^\vee$, the conjugate partition (switch rows and columns in the Ferrers graph) of $H(\A)$.\footnote{To obtain $H(\A)^\vee$ we reorder the sequence $H(\A)$ to make it non-increasing, and then take the conjugate.}
We say that $\A$ is strong Lefschetz (in \cite{IMM1} this is termed ``strong Lefschetz Jordan type'') if there exists a strong Lefschetz pair $(\A,\ell)$. The \emph{Jordan type} $P_\A$ (the generic Jordan type) of $\A$ is the partition $P_\ell$ for $\ell$ a generic element in $\mathfrak{m}_\A$.\par
When $P_A=(p_1,\ldots, p_t)$ (we will assume $p_1\ge p_2\ge \cdots \ge p_t$) we may write $\A=\oplus \mathcal{L}(i)$, where each $\mathcal{L}(i)\cong {\mathsf{k}}[t]/(t^{p_i})$ is a simple $k[t]$-module  under the action $t( a)= \ell\cdot a$. We call such a submodule of $\A$ an \emph{$\ell$-string} $\mathcal{L}(i)$ of~$\A$, and we will call the representatives of $t^i, i\in [0,p_1-1]$ in $A$ the \emph{beads} of the string. 
\end{definition}
\begin{example} The algebra $\A={\mathsf{k}}\{x,y\}/(x^2+y^3,\, xy)$ of Hilbert function $H(\A)=(1,2,1,1)$ is strong Lefschetz as $\ell=(x+y)$ satisfies $\ell^3=y^3\not=0$ in $\A$, but $\ell^4=0$, so there is a string $\langle 1,\ell,\ell^2,\ell^3\rangle$. Here $\langle x+y^2\rangle$ is a length-one string as $\ell(x+y^2)=0$, implying the Jordan type is $P_\ell=(4,1)=H(A)^\vee$.
\end{example}
We briefly recall the Macaulay duality for an Artinian Gorenstein ring:  the local ring $R=\kk\{x_1,\ldots, x_n\}$ acts on the divided power ring ${S=\kk_{DP}[X_1,\ldots,X_r]}$ by contraction: thus $x_i^{\,k}\circ X_i^{\,j}=X_i^{\,j-k}$ if $j\ge k$ and is $0$ otherwise. For an ideal $I\subset R$ we denote by $I^\perp=\{F\in S\mid h\circ F=0, \forall h\in I\}$; for an element $F\in S$ we denote by $\Ann F\subset R$ the ideal $\Ann F=\{h\in R\mid h\circ F=0\}$. Let $H$ be a length $n$ Gorenstein sequence of codimension $r$, one occurring as the Hilbert function of an Artinian Gorenstein algebra quotient of $R$ of socle degree $j$ ($H_j\not= 0$, $H_i=0$ for $i>j$).  We denote by $S_{\le j}$ the $R$-submodule of $S$ comprised of all elements in $S$ of degree at most $j$, and by $\mathcal E_j$ the set of classes $\overline{F}$ of degree-$j$ elements by the contraction action of units in $R$; that is for $F$, $G$ of degree $j$ in $S_{\le j}$ the class $\overline{F}=\overline {G}$ if there exists a unit $ u\in R^\ast$ such that $G=u\circ F$; we denote by $\mathcal E_{j,H}$ the subset of classes in $\mathcal E_j$ such that $H(R\circ F)=H$ (equivalently, so $H(R/\Ann F)=H$).  Then we have 
\begin{lemma}\cite{Mac},\cite[Lemma 1.1]{I2}\label{Macaulalyduallem} 
The Artinian Gorenstein (AG) algebras $\A=R/I$ of socle degree $j$ correspond 1 to 1 to the set $\mathcal E_{j,H}$ of classes $\overline{F}$ of degree $j$ elements $F\in S_{\le j}$ $\mod$ the contraction action by units of $R$. The class $\overline{F}$ determines the submodule $\A^\vee=R\circ F\subset S$ of Hilbert function~$H$. That is, we define
\begin{equation}
\begin{aligned}
\iota&:\  \Gor(H)\to \mathcal E_{j,H} &\iota( \A)&=\overline{F}, \text { where }R\circ F=I^\perp\\[1ex]
\iota^\prime&: \mathcal E_{j,H}\to \Gor(H),  &\iota^\prime (\overline{F})&=\A_F \text { where }\A_F=R/\Ann F.
\end{aligned}
\end{equation}
Then $\iota$ and $ \iota^\prime=\iota^{-1}$ are inverse isomorphisms.\par
For $F=F_j+F_{j-1}+\cdots$ and $\A=R/\Ann F$, the maximum length graded Artinian Gorenstein quotient of $\A^\ast$ is  $Q_\A(0)=\A^\ast/C_\A(1)$, and $Q_\A(0)=R/\Ann F_j$. 
\end{lemma}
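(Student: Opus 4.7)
The plan is to verify this lemma in two parts: first the basic Macaulay-duality bijection between $\Gor(H)$ and $\mathcal{E}_{j,H}$; second the identification of the maximal graded AG quotient $Q_A(0)$ with $R/\Ann F_j$. The main tool throughout is the contraction pairing, which restricts to a perfect pairing between $\mathfrak{m}^i/\mathfrak{m}^{i+1}$ and the degree-$i$ component $S_i$ of the divided power algebra. This extends to a duality between finite-colength ideals $I\subset R$ and finitely generated $R$-submodules $M \subset S$ via $I\mapsto I^\perp$ and $M \mapsto \Ann M$, with $\dim_\kk R/I = \dim_\kk I^\perp$.

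For part one, under this duality the socle $(0:\mathfrak{m})_{R/I}$ is dual to the minimal generating space $I^\perp/(\mathfrak{m}\cdot I^\perp)$, so $A=R/I$ is Gorenstein precisely when $I^\perp$ is cyclic, say $I^\perp = R\circ F$; the socle-degree-$j$ condition then forces $F \in S_{\le j}$ with top-degree component $F_j$ of degree exactly $j$. For uniqueness up to units, if $R\circ F = R\circ G$ then $G = u\circ F$ and $F = v\circ G$ for some $u,v \in R$, giving $(1-vu) \circ F = 0$; since $F \neq 0$ implies $1 \notin \Ann F$, examining constant terms forces $v_0 u_0 = 1$, so $u, v$ are units. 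The Hilbert function condition $H(R/\Ann F) = H$ is by construction equivalent to $\overline F \in \mathcal{E}_{j,H}$, so the maps $\iota, \iota'$ are mutually inverse isomorphisms.

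For part two, one first observes that $\Ann F_j \supseteq I^\ast$ (where $A^\ast = R/I^\ast$): any homogeneous $h \in I^\ast$ of degree $k$ lifts to some $\tilde h \in I \cap \mathfrak{m}^k$, and extracting the top-degree $(j-k)$ component of $\tilde h \circ F = 0$ yields $h \circ F_j = 0$. Thus $R/\Ann F_j$ is a graded AG quotient of $A^\ast$ of socle degree $j$. To identify it with $Q_A(0) = A^\ast/C_A(1)$, expand Definition~\ref{SymmDecompdef} at $a=0$: $C_A(1)_i = \bigl(\mathfrak{m}^i \cap (0:\mathfrak{m}^{\,j-i}) + \mathfrak{m}^{i+1}\bigr)/\mathfrak{m}^{i+1}$. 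Under the contraction pairing the condition ``annihilates $\mathfrak{m}^{\,j-i}$ in $A$'' corresponds dually to ``contracts into the top-degree part $R\circ F_j$ of $A^\vee$'', so the image of $\mathfrak{m}^i \cap (0:\mathfrak{m}^{\,j-i})$ in $A^\ast_i$ is exactly the degree-$i$ part of $\Ann F_j$. This gives $Q_A(0)_i \cong (R/\Ann F_j)_i$, and maximality follows because any graded AG quotient of $A^\ast$ of socle degree $\le j$ has Macaulay dual a cyclic submodule of $(I^\ast)^\perp \cap S_j \subseteq R\circ F_j$.

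I expect the main obstacle to be this final identification $Q_A(0)\cong R/\Ann F_j$, which requires tracking local Macaulay duality for $A$ and graded Macaulay duality for $R/\Ann F_j$ simultaneously and verifying compatibility between passage to the associated graded on the $A$-side and truncation to the top-degree form $F_j$ on the dual side. The earlier bijection and its Hilbert-function compatibility are standard consequences of the perfect contraction pairing.
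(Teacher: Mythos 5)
The paper offers no proof of this lemma --- it is recalled from \cite{Mac} and \cite[Lemma 1.1]{I2} --- so your proposal is measured against the standard inverse-systems argument, which is indeed the route you take. Your first part (perfect contraction pairing, socle of $R/I$ dual to the generator space $I^\perp/\mathfrak{m}\circ I^\perp$, hence Gorenstein iff $I^\perp=R\circ F$ is cyclic with $\deg F=j$, uniqueness of $F$ up to units via $(1-vu)\circ F=0$ and constant terms) is correct, as is your leading-form computation giving $I^\ast\subseteq\Ann F_j$. However, your justification of the key identification $Q_A(0)\cong R/\Ann F_j$ rests on a slogan that points the wrong way: $\bar h\in(0:_A\mathfrak{m}^{\,j-i})$ does not mean $h\circ F$ lies in the ``top-degree part $R\circ F_j$''; it means $h\circ F\in S_{\le j-i-1}$. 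The repair is the same degree bookkeeping you already used: for $h$ of order $i$, the degree-$(j-i)$ component of $h\circ F$ is exactly $h_i\circ F_j$, so $h\in\mathfrak{m}^{\,i}\cap(0:\mathfrak{m}^{\,j-i})$ forces $h_i\in(\Ann F_j)_i$; conversely, a homogeneous $h_i$ with $h_i\circ F_j=0$ has $\deg(h_i\circ F)\le j-i-1$, hence lies in $\mathfrak{m}^{\,i}\cap(0:\mathfrak{m}^{\,j-i})$. These two inclusions show the image of $\mathfrak{m}^{\,i}\cap(0:\mathfrak{m}^{\,j-i})$ in $A^\ast_i=R_i/I^\ast_i$ is $(\Ann F_j)_i/I^\ast_i$, whence $Q_A(0)_i\cong R_i/(\Ann F_j)_i$, compatibly with the ring structure.

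The genuine gap is your maximality step. A graded AG quotient of $A^\ast$ of socle degree $d<j$ has dual generator in $(I^\ast)^\perp\cap S_d$, which is \emph{not} contained in $R\circ F_j$, and such quotients can even be longer than $R/\Ann F_j$: take $F=X^{[9]}+G_6$ with $G_6$ a general sextic in three variables; then $y\circ F=y\circ G_6$ is a homogeneous quintic lying in $(I^\ast)^\perp$ (by precisely your leading-form argument applied to the homogeneous element $y\circ F\in I^\perp$), and $R/\Ann(y\circ G_6)$ has Hilbert function $(1,3,6,6,3,1)$, length $20$, whereas $R/\Ann F_j\cong\kk[x]/(x^{10})$ has length $10$. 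So the ``maximum length'' assertion must be read, as in \cite[Lemma 1.1]{I2}, among graded AG quotients of socle degree exactly $j$. With that restriction your argument does close: $(I^\ast)^\perp_j$ is the perp of $I^\ast_j$ in $S_j$, of dimension $H_j=1$ (since $\mathfrak{m}^{\,j}$ is a nonzero subspace of the one-dimensional socle), and it contains $F_j$ because $I^\ast\subseteq\Ann F_j$; hence the unique graded AG quotient of $A^\ast$ of socle degree $j$ is $R/\Ann F_j=Q_A(0)$.
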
\noindent
We term an element $F\in S$ determining an AG algebra $\A=R/I$, $I=\Ann F$, a Macaulay \emph{dual generator} of $\A$. It is a way to view and to construct AG algebras, that we will use frequently. For a more detailed study see \cite{I2,IM,KK}.\par
Next, we show by example that, although the modules $Q(a)$ satisfy a nice reflexive property as $\kk$-vector spaces, they are not reflexive in general as $\A^*$-modules.
\begin{example}\label{non-reflexex}
Let ${R=\kk[x,y,z]}$, consider ${F=X^3YZ+Y^4}$, and take ${A=R/\Ann F}$. Then ${H(A)=(1,3,5,4,3,1)}$, and its symmetric decomposition is 
\[
\D=\bigl(H(0)=(1,3,4,4,3,1),\, H(1)=(0,0,1,0,0)\bigr).
\]
We have that ${\Ann F=(z^2,\,xy^2,\, y^3-x^3z,\, y^2z,\, x^4)}$ and ${A^*=R/(\Ann F)^*}$ has a two-dimesional socle, generated by $y^2$ and $x^3yz$, since $(\Ann F)^*=(z^2,\,xy^2,\, y^3,\, y^2z,\, x^4)$. We can check that $Q(1)=\langle y^2\rangle$, and therefore ${
\Hom_{A^*}\bigl(Q(1),A^*\bigr)=\langle\phi_1,\phi_2\rangle}$, where $\phi_1(y^2)=y^2$ and $\phi_2(y^2)=x^3yz$, since the image of $y^2$ must be in the socle of $A^*$. But now we see that  
the double dual of $Q(1)$ 
has dimension four as a $\kk$-vector space, since a morphism from 
$\Hom_{A^*}\bigl(Q(1),A^*\bigr)$
to $A^*$ is defined by
\[
\phi_1\mapsto a_1y^2+b_1x^3yz, \qquad \phi_2\mapsto a_2y^2+b_2x^3yz.
\]
In particular, we cannot have an isomorphism between $Q(1)$ and its double dual, so $Q(1)$ is not reflexive as an $A^*$-module. 
\end{example}

\subsection{Background and Summary.}
We will assume henceforth that the regular local ring $R={\mathsf{k}}\{x,y\}$ or ${\mathsf{k}}\{x,y,z\}$, according to context -- unless otherwise stated. Given a Gorenstein sequence $H$ (not necessarily symmetric) recall that by $\Gor(H)$ we understand the (reduced) variety whose geometric points are the local Gorenstein quotients $\A=R/I$ having Hilbert function $H$.
It is known, since J. Brian\c{c}on's thesis \cite{Bri} and the first author's Memoir \cite{I1} that for $T$ any O-sequence of codimension two (satisfying Equation \eqref{Tcod2eq}), and for $\sf k$ algebraically closed, the variety  $Z_T$ parametrizing all ideals of Hilbert function $T$ is smooth and irreducible, and is fibred by affine spaces over the smooth projective variety $G_T$ parametrizing graded ideals of Hilbert function $T$ (see Lemma \ref{ZTlem} below). For a codimension two Gorenstein sequence $H$ (see Lemma \ref{ZTlem}.A) the variety $\Gor(H)$ has Zariski closure $Z_H$. Also in codimension two, given $H$, there is a unique symmetric decomposition \cite[Theorem 2.2]{I2}; finally, $\Gor(H)$ is smooth of known dimension \cite{Bri,I1}, fibred by opens in an affine space over $\GGor(H)$ \cite{I1} (see Section~\ref{twotothreesec} and for the fibration also \cite{HH}).
Thus, $\Gor(H)$ is irreducible in codimension two. \par
In codimension three S. Diesel showed that the \emph{graded} Gorenstein algebras $\GGor(H)$ of Hilbert function $H$ form an irreducible family of known dimension \cite{Di}: to prove this she used the D.~Buchsbaum - D.~Eisenbud Pfaffian structure theorem \cite{BuEi}. She also showed results about the poset of generator-relation degrees corresponding to a given Hilbert function $H$ (for a summary, see \cite[Theorem~5.25]{IK}). In codimension four, the family  $\GGor(H)$ of graded Gorenstein algebras $\A=R/I$ of Hilbert function $H$ may itself have several irreducible components: this was shown in \cite[Theorem 4.9]{IS} for the infinite class of graded Gorenstein sequences beginning $H=(1,4,7,h,b,\ldots)$, $8\le h\le 10$, $b\le 3h-17$. A general method here in codimension four is interpreting the ideal $(I_{\le 3})$ generated by the initial portion of $I$ as the ideal of a curve in $\mathbb P^3$.  Then some of the complicated component structure of the Hilbert scheme of curves on $\mathbb P^3$ can be shown to transfer to give irreducible components of $\GGor(H)$ (see also \cite{K} and \cite[Chapter 6]{IK}).  \vskip 0.2cm
\subsubsection{Summary.}
Section 2 presents tools we will use, as well as examples and questions.
Sections \ref{specJTsec} and \ref{semicontsec} report the well-known results of how Jordan types (Section \ref{specJTsec}) and symmetric decompositions $\mathcal D$ (Section \ref{semicontsec}) may specialize.  Section \ref{twotothreesec} gives a brief summary of parametrization results about graded Gorenstein algebras in codimension two, for the case - normal for us - when $Q(0)$ has codimension two.  Section \ref{JTandSDsec} asks about the relation between Jordan bases for an AG algebra $A$ and $A^\ast$, and the symmetric decomposition of $A^\ast$: can we find a Jordan basis, or even a pre-Jordan basis for $A$ (not requiring each string to end in zero, Definition \ref{preJordandef}) that induces a basis for each $Q_A(a)$. In general we cannot (Example \ref{2to3ex}), and this is an insurmountable obstruction to directly defining a notion of Jordan order type for non-graded AG algebras that would generalize the symmetric Jordan degree type (specify the initial degrees of strings)  for graded AG algebras introduced implicitly in \cite{HW} and explicitly in \cite[\S 2.6]{IMM1} and \cite{CGo}.\par
 In Section \ref{JTA,A*sec} we study in codimension three the Jordan types of $\A$, of its associated graded algebra $\A^\ast$ and of the subquotients of $\A^\ast$ that we term the symmetric decomposition of $\A^\ast$ (Definition \ref{SymmDecompdef}).
There is, as in the graded case, an inequality between the Jordan type of $\A$ and the conjugate partition of the Hilbert function $H(\A)$ (Lemma \ref{SLupperbdlem} below from \cite[Theorem~2.5]{IMM1}). We give several examples where $P_A>P_{A^\ast}$, including Example \ref{symHilbex} where $H(A)$ is symmetric.  We introduce in Definition \ref{def:relativeLef} further invariants, a contiguous partition $P_c(H)$ associated to $H$, and related invariants  $P_\D$ and $P_c(\mathcal{D})$ relevent when $H$ or a component $H(a)$ of $\D$ is non-unimodal; and we give in Lemma \ref{contiglem} an inequality between $P_c(H)$ and $P_{A,\ell}$ when $H(A)=H$. \par In Section \ref{2compsec}
 our Example~\ref{AA*diffex} is our first using the previous tools to show that $\Gor(H)$ - here $H=(1,3,4,4,4,2,1)$ - may have several irreducible components. There, a single stratum $\Gor(\D_2)$ has several irreducible components (Claim \ref{Dtwocompclaim}).  This example shows also that a concantenated Jordan type associated to the symmetric decomposition of $\A^\ast$ may be larger in the dominance partial order than that of $\A^\ast$, a surprise to us. In Example~\ref{thirdAastex} we give a simple case illustrating for $H=(1,3,5,4,4,2,1)$ our method of showing that $\Gor(H)$ may have several irreducible components in codimension three.\par
 In Section~\ref{exoticsec} we give a concise presentation of the theory of exotic summands of the Macaulay dual generator $F$, from \cite[\S 3.2]{BJMR}; in Lemma \ref{exoticlem} we determine the number of  exotic parameters of $\Gor(H)$ when $H\bigl(Q(0)\bigr)$ has codimension two, and $H(\A)$ has codimension three. This will help us determine the dimensions of some of the irreducible components of $\Gor(H)$ in the following sections. The concept of relatively compressed modification in Section \ref{rcsec} from \cite{I2,IM} helps in giving the structure of some of the components $\Gor(\D)$ we will see later.
\par In Section \ref{applysec} we study the irreducible components of several $ \Gor(H)$ in codimension three.\par
  In contrast to the case  of symmetric $H$ in codimension three, here the variety $\Gor(H)$ parametrizing all AG algebras of a non-symmetric Hilbert function $H$ may be reducible. This was shown by the first author for the Hilbert function $H=(1,3,3,2,1,1)$; the proof used the semicontinuity of symmetric decomposition (Lemma \ref{trianglelem}) and a dimension calculation \cite[Theorem 4.2]{I2}. Our main results in Section~\ref{3compsec} introduce a new method for showing the reducibility of certain $\Gor(H)$ in codimension three, by comparing the Jordan type of $A$ under dominance partial order, and a natural partial order on symmetric decompositions.  We use this method to show that there are three irreducible components for $H=(1,3,4,4,3,2,1)$, corresponding to the three occurring symmetric decompositions:
 \begin{thm} (Theorem \ref{2.1thm}) The Gorenstein sequence $H=(1,3,4,4,3,2,1)$ has  three symmetric decompositions,
 \begin{align*} 
 \D_1&=\bigl(H(0)=(1,2,3,4,3,2,1),\, H(1)=H(2)=0,\, H(3)=(0,1,1,0)\bigr)\\
 \D_2&=\bigl(H(0)=(1,2,3,3,3,2,1),\, H(1)=0,\, H(2)=(0,1,1,1,0)\bigr)\\
 \D_3&=\bigl(H(0)=(1,2,2,2,2,2,1),\, H(1)=(0,1,2,2,1,0)\bigr).
 \end{align*}
 The generic Jordan type of these algebras, is, respectively  $P_1= (7,5,3,2,1)$, $P_2= (7,5,3,3)$, and $P_3
 = (7,5,4,2)$. These symmetric decompositions stratify $\Gor(H)$ into three irreducible components of dimensions, respectively, $29$, $34$ and $28$ for $\D_1$, $\D_2$ and $\D_3$.
 \end{thm}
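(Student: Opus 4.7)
\emph{Proof plan.} The strategy is to exhibit each $\Gor(\D_i)$ as an irreducible locally closed subvariety of $\Gor(H)$, compute the generic Jordan types directly, and then apply the opposing-filtration method to show that no one of the three strata is contained in the closure of another.

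\emph{Step 1 (enumeration and realization).} By Lemma~\ref{symmdecomplem} every admissible summand $H(a)$ is a non-negative sequence symmetric about $(j-a)/2=(6-a)/2$, vanishing in degree $0$ for $a\ge 1$, with $\sum_a H(a)=H$. Since $H=(1,3,4,4,3,2,1)$ departs from a symmetric unimodal shape only by one unit excess in each of degrees $1$ and $2$, a short case analysis on how to distribute these two excess units among the $H(a)$'s for $a\ge 1$, subject to the correct center of symmetry, produces exactly the three decompositions $\D_1,\D_2,\D_3$. I then write explicit Macaulay dual generators $F_i\in S=\kk_{DP}[X,Y,Z]$ whose leading form $F_j$ is a binary form (in two of the three variables) realizing $H(0)$ for $\D_i$, together with lower-degree perturbations chosen to supply the non-graded contribution $H(a_i)$. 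From Lemma~\ref{Macaulalyduallem} one reads off $H(A_i)=H$ and $\D(A_i)=\D_i$ by computing $\Ann F_i$ and its initial ideal.

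\emph{Step 2 (irreducibility, dimensions, and generic Jordan types).} In each case $H(0)$ has $h_1=2$, so $Q(0)$ is a codimension-two graded AG algebra and $\GGor(H(0))$ is smooth irreducible of known dimension by the codimension-two results of Section~\ref{twotothreesec} (Lemma~\ref{ZTlem}). The fibration of $\Gor(\D_i)$ over $\GGor(H(0))$ is governed by the exotic summand count of Lemma~\ref{exoticlem} (applicable exactly because $\codk Q(0)=2$ while $\codk A=3$) together with the relatively compressed modifications of Section~\ref{rcsec} encoding each non-zero $H(a)$ with $a\ge 1$; both contributions are open pieces of affine spaces with irreducible fibres, so $\Gor(\D_i)$ is irreducible, and adding the contributions yields the claimed dimensions $29,34,28$. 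For the generic Jordan type, take $\ell=\alpha x+\beta y+\gamma z$ generic and compute the chain $\ell^k\circ F_i$ via the contraction action to produce explicit $\ell$-strings of the required lengths; matching these against the upper bound $P_{A,\ell}\preceq P_c(H)$ from Lemma~\ref{contiglem} and the sharper bounds via the invariants $P_\D$ of Definition~\ref{def:relativeLef} pins the generic types down to $P_1=(7,5,3,2,1)$, $P_2=(7,5,3,3)$, $P_3=(7,5,4,2)$.

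\emph{Step 3 (components via opposing filtrations).} Every AG algebra with Hilbert function $H$ carries a unique symmetric decomposition, so $\Gor(H)=\bigsqcup_{i=1}^3\Gor(\D_i)$, hence $\Gor(H)=\bigcup_i\overline{\Gor(\D_i)}$. It remains to rule out $\Gor(\D_i)\subseteq\overline{\Gor(\D_j)}$ for $i\ne j$. Such a containment would force, by the dominance-order upper semicontinuity of Jordan type, the inequality $P_i\preceq P_j$, and by the semicontinuity of the symmetric decomposition (Lemma~\ref{trianglelem}) the inequality $\D_i\succeq\D_j$ in the partial order on the partial sums $\sum_{a\ge a_0}H(a)_d$. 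The strict dominance chain $P_3\succ P_2\succ P_1$ disposes of $(i,j)\in\{(2,1),(3,1),(3,2)\}$, while the partial-sum comparison kills the remaining three pairs: at $(a_0,d)=(2,3)$ the partial sum of $\D_1$ is $0$ but that of $\D_2$ is $1$, excluding $(1,2)$; and at $(a_0,d)=(1,2)$ the partial sum of $\D_3$ equals $2$ while those of $\D_1$ and $\D_2$ are only $1$, excluding both $(1,3)$ and $(2,3)$. All six non-trivial pairs are therefore excluded, so the three closures $\overline{\Gor(\D_i)}$ are distinct irreducible components of $\Gor(H)$ of the stated dimensions.

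\emph{Main obstacle.} The hardest step is the dimension computation in Step~2: the total dimension arises from several ostensibly independent contributions -- the codimension-two graded dimension $\dim\GGor(H(0))$, the exotic summand count from Lemma~\ref{exoticlem}, and the relatively compressed modification count -- and one must verify these are genuinely independent and no parameter is double-counted, in order to pin the dimension of $\Gor(\D_2)$ at exactly $34$. The generic Jordan type computation is also delicate: constructing enough linearly independent $\ell$-strings to saturate the claimed partitions, while simultaneously matching a refined upper bound, requires putting each $F_i$ into a Jordan-friendly normal form, since the blunt bound $P_{A,\ell}\preceq H(A)^\vee$ is not sharp in any of the three strata.
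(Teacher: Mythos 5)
There is a genuine gap, and it sits exactly where the paper's proof does its hardest work: the determination of the \emph{generic} Jordan types $P_1=(7,5,3,2,1)$ on $\Gor(\D_1)$ and $P_2=(7,5,3,3)$ on $\Gor(\D_2)$. Computing $\ell$-strings for your explicit dual generators $F_i$ only bounds the generic type from \emph{below} (by Lemma \ref{dominancelem} the generic type dominates that of any particular member), so you must also rule out every strictly larger partition for a general member of the stratum. The upper bounds you invoke cannot do this: since $H=(1,3,4,4,3,2,1)$ is unimodal, $P_c(H)=H^\vee=(7,5,4,2)$, which does not exclude $(7,5,3,3)$, $(7,5,4,1,1)$, or $(7,5,4,2)$ itself for an algebra in $\Gor(\D_1)$, nor $(7,5,4,2)$ for one in $\Gor(\D_2)$ (and Lemma \ref{contiglem} is stated for graded modules; for the non-graded $A$ the bound is Lemma \ref{SLupperbdlem}, numerically the same here). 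Worse, the ``sharper bound via $P_\D$'' does not exist: the paper's own Example \ref{AA*diffex} (see Equation \eqref{inequaleq} and the algebra $C$ there) shows $P_{A,\ell}$ can be strictly \emph{larger} than $P_{\D}$ and $P_c(\D)$, so these invariants are not upper bounds for the Jordan type of a non-graded $A$. The paper closes this gap with a substantial structural argument (parts i.a and i.b of the proof of Theorem \ref{2.1thm}): assuming a pre-Jordan basis realizing a larger candidate partition, it tracks which classes the string generators must represent in the modules $Q_\A(0)_1$, $Q_\A(0)_2$, $Q_\A(0)_4$, $Q_\A(3)_1$ (resp.\ the Loewy condition $c'\in(0:\m^4)$ for $\D_2$) and derives a contradiction for each of $(7,5,3,3)$, $(7,5,4,1,1)$ and $H^\vee$. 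Nothing in your proposal substitutes for this step.

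Two secondary points. First, in Step 3 you justify the exclusions by semicontinuity of the partial sums $\sum_{a\ge a_0}H(a)_d$; but Lemma \ref{trianglelem} establishes semicontinuity only for the triangle sums $N_{i,b}$ (equivalently Corollary \ref{D0cor} for the $H(0)_i$), and the degreewise sums $\sum_{a\ge a_0}H(a)_d=\dim_\kk C(a_0)_d$ are differences of semicontinuous quantities, so your stated criterion is unproven. The conclusions you need are nevertheless recoverable: $\D_1(0)_3=4>\D_2(0)_3=3>\D_3(0)_3=2$ and Corollary \ref{D0cor} kill the three remaining pairs, which is what the paper does. Second, in Step 2 the base of your fibration should be the non-graded codimension-two family $Z_{H(0)}$ of Lemma \ref{ZTlem} (fibred over the $\mathbb P^2$ of choices of $V\subset R_1$), not $\GGor(H(0))$; with the graded family as base the parameter count for $29$, $34$, $28$ does not come out unless the missing parameters are reinstated elsewhere, and the delicate point in the paper's count for $\D_2$ and $\D_3$ is the constrained choice of the degree-$4$ term $G_4$ (the condition $I_2\circ G_4\subset I^\perp$), which your outline does not address.
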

We extend this result to the infinite set of Gorenstein sequences $H=(1,3,4^k,3,2,1)$, $k\ge 2$ in Theorem \ref{3compthm}. In Section \ref{infsec}, Example \ref{2.4ex} and Theorem \ref{infinitethm}, we exhibit a doubly infinite sequence of families $\Gor(H)$ in codimension three, each $H$ having having at least two symmetric decompositions, each of which determines an irreducible  component of $\Gor(H)$. Our method of proof uses the dominance partial order on Jordan type (Lemma \ref{dominancelem}) and the semicontinuity of symmetric decomposition (Lemma \ref{trianglelem}, \cite[\S 4]{I1}). Perhaps surprisingly, these work in opposite directions. \par
 There are obvious questions concerning the growth of the number of irreducible components of $\Gor(H)$ as function of the codimension, socle degree and Sperner number (maximum height) of $H$, which we plan to address in a subsequent paper.   \par
 In Section \ref{realizablesec} we discuss whether the natural conditions on a potential symmetric decomposition of $H$ implied by Lemma \ref{symmdecomplem} are sufficient: is any such potential decomposition \emph{realizable}, that is, does it actually occur for some AG algebra $\A$?  An example with answer ``no'' was given for $H=(1,4,3,4,3,1)$, $\D=\bigl(H(0)=(1,3,3,3,3,1),\, H(1)=(0,1,0,1,0)\bigr)$  in \cite[p.99]{I2}; S. Masuti and M. Rossi have shown that the answer is ``Yes'' for socle degree at most four \cite{MR}. Very recently S. Masuti has obtained partial results in socle degree five, suggesting that there are only a very few non-realizable decompositions \cite{Mas}. We show a second main result  (Theorem \ref{non-realizablethm}):
 \begin{thm}
Let $R=\kk\{x_1,\ldots, x_r\}$, $S=\kk_{DP}[X_1,\ldots X_r]$, $R'=R\{w\}$, $S'=S_{DP}[W]$. Let $B$ be a graded Gorenstein algebra of Hilbert function $H(0)$ of socle degree $j$, and assume that $B=R/I_B$ where $ I_B=\Ann f\cap R$ for an element $f\in S_j$. Assume further that $j\ge 5$ and that $I_B$ is generated in degrees less or equal $j-3$.\footnote{In effect, $j\ge 8$}. Let $H(1)=(0,1,0,\ldots, 0,1_{j-2},0)$, and assume that $H=H(0)+H(1)$ satisfies the Macaulay conditions.  Then $\D=\bigl(H(0),H(1)\bigr)$ is not realizable as the (complete) decomposition sequence of an AG algebra quotient $A$ of $R'$ having $Q(0)$ isomorphic to $B$.
 \end{thm}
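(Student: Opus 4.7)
My plan is to argue by contradiction. Suppose $\D=(H(0),H(1))$ is realized by an AG algebra $A=R'/\Ann F$ with $Q_A(0)\cong B$; let $\bar w$ span $C_A(1)_1$ and $\bar v$ span $C_A(1)_{j-2}$ in $A^\ast$. The aim is to force $\bar v=0$. Since $\D$ is the full decomposition, $C_A(a)=0$ for $a\ge 2$, and by Lemma~\ref{symmdecomplem} the ideal $C_A(1)\subset A^\ast$ satisfies $C_A(1)_d=0$ for every $d\notin\{1,j-2\}$. The hypothesis $j\ge 5$ ensures that $2$, $j-1$, and the entire range $\{2,\ldots,j-3\}$ all lie outside $\{1,j-2\}$. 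Because $C_A(1)$ is an ideal of $A^\ast$, $\bar w\cdot A^\ast_1\subseteq C_A(1)_2=0$, and therefore $\bar w\cdot\m=0$ and $\bar w^2=0$ in $A^\ast$.

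Next I use the hypothesis on the generating degrees of $I_B$. Choose lifts $\bar x_1,\ldots,\bar x_r\in A^\ast_1$ of $x_1,\ldots,x_r\in B_1$ under the quotient $A^\ast\twoheadrightarrow A^\ast/C_A(1)\cong B$, so that the $\kk$-algebra homomorphism $R\to A^\ast$, $x_i\mapsto\bar x_i$, composed with $A^\ast\twoheadrightarrow B$, recovers the defining quotient $R\twoheadrightarrow R/I_B=B$. If $g\in I_B$ is a minimal generator of degree $d$, then $2\le d\le j-3$ (the lower bound because $I_B$ has no linear forms, the upper bound by hypothesis), so $g(\bar x)\in A^\ast_d$ projects to $g=0$ in $B$ and hence $g(\bar x)\in C_A(1)_d=0$. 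Thus every generator of $I_B$ evaluates to zero at $\bar x$ in $A^\ast$.

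Finally I lift and simplify $\bar v$. A preimage $V\in R'_{j-2}$ of $\bar v$ must project to $0$ in $B$, so it lies in the kernel $I_BR'+(w)$ of $R'\to B$ and admits an expression
\[
V=\sum_\alpha g_\alpha h_\alpha+w\,q,
\]
with $g_\alpha\in I_B$ a minimal generator of degree $d_\alpha\le j-3$, $h_\alpha\in R'_{j-2-d_\alpha}$, and $q\in R'_{j-3}$. By the previous step each $g_\alpha(\bar x)=0$ in $A^\ast$, so the first sum contributes nothing. Decomposing $q=q_0+wq_1+w^2q_2+\cdots$ with $q_k\in R_{j-3-k}$, and invoking $\bar w^2=0$ to annihilate all terms with $k\ge 1$, we obtain $\bar v=\bar w\cdot\bar q_0$ in $A^\ast$. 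Since $j\ge 5$ gives $j-3\ge 2$, the class $\bar q_0$ lies in $\m$; combined with $\bar w\cdot\m=0$, this forces $\bar v=0$, contradicting $\bar v\ne 0$. The main hurdle is matching the bound on the generating degrees of $I_B$ with the ``gap'' in $H(1)$: the hypothesis places every generator of $I_B$ in a degree where $C_A(1)$ vanishes, which kills the $I_B$-part of any lift of $\bar v$ and leaves only a $w$-multiple that the ideal relation $\bar w\cdot\m=0$ then dispatches.
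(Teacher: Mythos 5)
Your proof is correct, but it takes a genuinely different route from the paper's. The paper argues on the Macaulay-dual side: writing $F=f+f_{j-1}+\cdots$, it invokes \cite[Lemma 1.40]{IM} to put $f_{j-1}=WG$ with $0\ne G\in S_{j-2}$, shows $I_u\circ G=0$ for $u\le j-3$ (otherwise $H_2\ge H(0)_2+1$, contradicting completeness of $\D$), and then observes that realizing $H(1)_1=1$ forces $I_{j-2}\circ G\ne 0$, impossible since $I_{j-2}$ lies in the ideal generated by $I_{\le j-3}$. You instead work entirely in $A^\ast$: completeness of $\D$ gives $C_A(2)=0$, so $C_A(1)$ is an ideal concentrated in degrees $1$ and $j-2$; since $A^\ast$ is standard graded and $C_A(1)_2=0$ (this is where $j\ge5$ enters), the degree-one element $\bar w$ kills $\m_{A^\ast}$; and a homogeneous lift of the degree-$(j-2)$ class splits into $I_B$-multiples, which die in $A^\ast$ because the generators of $I_B$ land in degrees $2,\ldots,j-3$ where $C_A(1)$ vanishes, plus a $w$-multiple, which dies because $j-3\ge1$ puts its cofactor in $\m_{A^\ast}$; hence $C_A(1)_{j-2}=0$, contradicting $H(1)_{j-2}=1$. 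Your version is more self-contained (no dual generator, no appeal to \cite[Lemma 1.40]{IM}, no contraction computation), using only that $C_A(1)$ is an ideal with Hilbert function $H(1)$ and $A^\ast/C_A(1)\cong B$; the paper's dual-side argument matches the standard-form and exotic-summand language used elsewhere in the paper and is the form adapted to its further assertion that $\D$ cannot occur even as the initial part of a longer decomposition. Two small points you should make explicit: the preimage $V$ is taken along the surjection $R'\to A^\ast$, $x_i\mapsto\bar x_i$, $w\mapsto\bar w$ (surjective because $\bar x_1,\ldots,\bar x_r,\bar w$ span $A^\ast_1$); and if one were to allow a linear generator $g_\alpha$ of $I_B$, its evaluation lands in $C_A(1)_1=\langle\bar w\rangle$ while its cofactor $h_\alpha$ has degree $j-3\ge1$, so that term still dies against $\bar w\cdot\m_{A^\ast}=0$ -- so your standing assumption that $I_B$ contains no linear forms is harmless.
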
 
 \par The question of non-ubiquity asks whether there is a decomposition $\D(\A): H=\sum_0^k H(a)$  such that some $H'=\sum_0^{k'}H(a)$ is not realizable as $\mathcal{D}(B)$ for a value $k' <k$. In \cite{IM} we showed a partial non-ubiquity result where we imposed further conditions on $B$. Our examples here, however, do not imply a non-ubiquity result, and the general question of non-ubiquity remains open.\par
  We discuss related questions and pose further problems in Section \ref{probsec}.
\section{Preliminary results.}
We set out ingredients we will use to work with deformations of algebras.
\subsection{Specialization of Jordan type.}\label{specJTsec}
We first recall the dominance partial order on partitions of $n$. In Lemma \ref{SLupperbdlem} (see \cite[Theorem 2.5]{IMM1}) we remind that the conjugate $H(A)^\vee$ (Definition \ref{JTdef}) to $H(A)$ is an upper bound for the Jordan type of an element $\ell\in \m_A$. 
\begin{definition}\label{dominancedef}[{Dominance} order on partitions of $n$.] Let $ P=(p_1,p_2,\ldots, p_s)$, $p_1\ge p_2\ge \cdots\ge p_s $ and $P^\prime=(p_1^\prime, p_2^\prime,\ldots p_{s^\prime}^\prime)$ with $p^\prime_1\ge p^\prime_2\ge \cdots\ge p^\prime_{s^\prime}$.  Then we say $P\le P^\prime$ in the dominance partial order if for each $i\in [1,\max\{s,s^\prime\}]$ we have
\begin{equation} 
\sum_{k=1}^i p_i\le \sum_{k=1}^i p^\prime_i.
\end{equation}
\end{definition}
The following results are well-known. We thank Steven Kleiman for the two references
D. Mumford \cite[Prop. 8, p. 44]{Mumf} (with an unnecessary hypothesis of $W$ irreducible) and
R.~Hartshorne \cite[Exercise 5.8(c), p. 125]{Hart} for the first Lemma.
\begin{lemma}[Flat family]\label{flatlem} Suppose $\mathsf{k}$ is algebraically closed, that $W$ is a reduced variety and let $
\pi: M \to W$ be an algebraic family, whose fibres are Artinian schemes.   Then  $M $ is a flat family if and only if the length of the fibres $M_w=\pi^{-1}(w)$, $w\in W$  is constant.
\end{lemma}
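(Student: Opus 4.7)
The plan is to translate the flatness condition for $\pi$ into a local freeness condition for a coherent sheaf on $W$, and then apply the classical criterion that on a reduced Noetherian scheme a coherent sheaf is locally free if and only if its fibre dimensions are locally constant. Since each fibre $M_w$ is Artinian, $\pi$ is quasi-finite, and in the algebraic-family setting of interest (with $M$ sitting inside $W\times Z$ for a fixed ambient $Z=\Spec R/\mathsf{m}^n$), $\pi$ is in fact finite, hence affine. Set $\mathcal{F}=\pi_\ast\mathcal{O}_M$, which is then a coherent $\mathcal{O}_W$-module. Flatness of $M/W$ is equivalent to local freeness of $\mathcal{F}$, and the length of the fibre $M_w$ equals $\dim_{\kappa(w)}\mathcal{F}(w)$, where $\mathcal{F}(w)=\mathcal{F}_w\otimes_{\mathcal{O}_{W,w}}\kappa(w)$.

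The forward direction is immediate: if $\mathcal{F}$ is locally free of rank $r$ in a neighborhood of $w_0$, the fibre dimension is constant equal to $r$ there. For the converse, which is the substantive step, I would argue locally as follows. Fix $w_0\in W$ and choose $s_1,\ldots,s_r\in\mathcal{F}_{w_0}$ whose classes form a $\kappa(w_0)$-basis of $\mathcal{F}(w_0)$. By Nakayama's lemma these sections generate $\mathcal{F}$ on an open neighborhood $U$ of $w_0$, giving a surjection $\phi:\mathcal{O}_U^r\twoheadrightarrow\mathcal{F}|_U$ with kernel $\mathcal{K}$. Tensoring with $\kappa(w)$ for any $w\in U$ yields the right-exact sequence $\mathcal{K}(w)\to\kappa(w)^r\to\mathcal{F}(w)\to 0$. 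Because $\dim_{\kappa(w)}\mathcal{F}(w)=r$ by hypothesis (possibly after shrinking $U$), the right-hand map is an isomorphism, so $\mathcal{K}(w)=0$ at every point of $U$.

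To conclude $\mathcal{K}=0$, I would invoke reducedness: a local section $k\in\mathcal{K}(U)\subset\mathcal{O}_U^r$ whose image in $\kappa(w)^r$ vanishes for every $w\in U$ must itself be zero, since a function on a reduced scheme that vanishes at every point is identically zero. Therefore $\phi$ is an isomorphism and $\mathcal{F}|_U\cong\mathcal{O}_U^r$ is free, so $\pi$ is flat on the open set $\pi^{-1}(U)$. Varying $w_0$ covers $W$.

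The main obstacle, and the place where the hypotheses of the Lemma are strictly needed, is precisely this last vanishing: reducedness of $W$ cannot be dropped, as nilpotents in $\mathcal{O}_W$ would allow nonzero sections of $\mathcal{K}$ that vanish at every closed point, breaking the implication constant fibre length $\Rightarrow$ locally free. The finiteness of $\pi$ (which makes $\pi_\ast$ exact and turns flatness of $M/W$ into local freeness of $\mathcal{F}$) is the other silent ingredient; it is what lets us package the Artinian-fibre hypothesis into a coherent-sheaf statement amenable to the classical freeness criterion.
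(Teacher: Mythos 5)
Your proof is correct. The paper itself gives no argument for this lemma --- it simply cites Mumford \cite{Mumf} and Hartshorne \cite[Exercise 5.8(c)]{Hart} --- and your argument is exactly the standard one behind those citations: since the family is finite over $W$ (as it sits in $W\times \Spec R/\mathsf{m}^n$), flatness is local freeness of $\pi_\ast\mathcal{O}_M$, and on a reduced Noetherian base local freeness of a coherent sheaf is equivalent to constancy of the fibre dimension, proved via Nakayama plus the fact that a section vanishing at every point of a reduced scheme is zero. You also correctly flag the two silent hypotheses (finiteness of $\pi$ and reducedness of $W$) that the lemma needs, so nothing is missing.
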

\begin{lemma}\label{dominancelem}
Suppose $\mathsf{k}$ is algebraically closed. Let $\AAA(w)$, $w\in W$ be a flat family of Artinian algebras over a reduced, irreducible parameter space $W$. Then there is a generic Jordan type $P_W$: it is the generic Jordan type of $\AAA(w)$ that occurs for an open dense set of $w\in W$. Let $w_0\in W$. Then the generic Jordan type $P_0$ of $\AAA(w_0)$, satisfies $P_W\ge P_0$ in the dominance partial order of Definition \ref{dominancedef}.
\end{lemma}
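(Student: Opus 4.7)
\emph{Proof plan.} The plan rests on three ingredients: (i) the Jordan type of a nilpotent endomorphism on an $n$-dimensional vector space is encoded by the sequence of ranks of its iterated powers, since the conjugate partition satisfies $p_k^\vee=\dim\ker m^k-\dim\ker m^{k-1}$; (ii) ranks of morphisms of locally free sheaves are lower semicontinuous; (iii) the space of pairs $(w,\ell)$ with $\ell\in\m_{\AAA(w)}$ is irreducible over the irreducible $W$. With these in hand the specialization statement will be immediate.

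First I would set up the total space. Let $n$ denote the common fibre length of $\pi:\AAA\to W$ (Lemma~\ref{flatlem}). Each fibre $\AAA(w)$ is a local Artinian algebra with residue field $\kk$, so $\m_{\AAA(w)}$ has $\kk$-dimension $n-1$; by flatness the maximal ideal sheaf $\m_\AAA\subset\AAA$ is locally free of rank $n-1$ on $W$, and its total space $\widetilde W\to W$ is a vector bundle over the irreducible $W$, hence irreducible. Let $\ell$ be the tautological section of $\m$ on $\widetilde W$. For each $k\ge 0$, multiplication by $\ell^k$ defines a morphism $m_\ell^k:\AAA\otimes\OO_{\widetilde W}\to\AAA\otimes\OO_{\widetilde W}$ of rank-$n$ locally free sheaves; its pointwise rank is lower semicontinuous on $\widetilde W$. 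Let $r_k$ denote its maximum value, attained on an open dense $U_k\subset\widetilde W$. The intersection $U=\bigcap_{k=0}^n U_k$ is open and dense, and on $U$ every $m_\ell^k$ has constant rank $r_k$, so the Jordan type is constant on $U$; this defines $P_W$, with $(P_W^\vee)_k=r_{k-1}-r_k$.

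For the specialization, apply the identical construction to the fibre $\widetilde W_{w_0}\cong\m_{\AAA(w_0)}$: the generic (maximum) ranks $\rho_k$ of the same multiplication maps over this fibre produce the generic Jordan type $P_0$ of $\AAA(w_0)$, with $(P_0^\vee)_k=\rho_{k-1}-\rho_k$. Because rank is lower semicontinuous on the ambient irreducible $\widetilde W$, the generic rank over any subvariety is bounded above by the generic rank over $\widetilde W$; applied to the closed fibre this gives $\rho_k\le r_k$ for every $k$. Hence $\sum_{j\le k}(P_0^\vee)_j=n-\rho_k\ge n-r_k=\sum_{j\le k}(P_W^\vee)_j$, so $P_0^\vee\ge P_W^\vee$ in dominance, equivalently $P_W\ge P_0$, as claimed. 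The only delicate technical point is verifying that $\m_\AAA$ is a locally free $\OO_W$-subsheaf of $\AAA$ (so that $\widetilde W$ is genuinely a bundle over $W$, yielding irreducibility for free); once flatness is used for this, everything else reduces to the textbook semicontinuity of rank for morphisms of locally free sheaves.
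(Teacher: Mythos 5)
Your proposal is correct and follows essentially the same route as the paper, which simply invokes the lower semicontinuity of the ranks of the powers $m_\ell^k$ (citing \cite[Thm.~6.2.5]{CM} and \cite[Corollary 2.44]{IMM1}); you have merely written out that argument in full, including the construction of the total space of $\m_\AAA$ to make the generic Jordan type $P_W$ well defined. The details check out: the partial sums of the conjugate partitions are $n-\operatorname{rank} m_\ell^k$, semicontinuity gives $\rho_k\le r_k$, and conjugation reverses the dominance order, yielding $P_W\ge P_0$.
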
 
\begin{proof}
This is straightforward to show from the semicontinuity of the rank of $\ell(t)^i$, see \cite[Thm. 6.2.5]{CM} or \cite[Corollary 2.44]{IMM1}.
\end{proof}\par
We have from \cite[Theorem 2.5]{IMM1}
\begin{lemma}\label{SLupperbdlem} Let $\A=(\A,\mathfrak{m},{\mathsf{k}})$ be a local Artinian algebra over $\mathsf{k}$ 
and let $M\subset \A^k$ be an $A$-module, with Hilbert function $H(M)$. For any $\ell\in\mathfrak{m}_\A$, its Jordan type satisfies
\begin{equation}
\label{eq:JTA}
P_{\ell,M}\leq H(M)^\vee
\end{equation}
in the dominance partial order.
\end{lemma}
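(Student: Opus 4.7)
The inequality $P_{\ell,M}\le H(M)^\vee$ in the dominance order is equivalent to showing, for every $k\ge 1$, that $\sum_{l=1}^k p_l \le \sum_i \min(k, h_i)$, where $P_{\ell,M}=(p_1\ge p_2\ge\cdots)$ lists the Jordan block sizes and $h_i = H(M)_i$; the right-hand side is precisely $\sum_{l=1}^k (H(M)^\vee)_l$. The plan is to establish this inequality via a bead-counting argument on a suitably chosen Jordan basis of $M$ for the action of $\ell$.

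The key step is to choose a Jordan basis adapted to the $\mathfrak{m}$-adic filtration on $M$: for each Jordan string $v_l,\ell v_l,\ldots,\ell^{p_l-1} v_l$, each bead $\ell^i v_l$ should lie in exactly one graded piece $\mathrm{gr}_d M=\mathfrak{m}^d M/\mathfrak{m}^{d+1}M$, and the collection of all beads at degree $d$ should project to a basis of $\mathrm{gr}_d M$. Since $\ell\in\mathfrak{m}_A$ implies $\ell(\mathfrak{m}^i M)\subset\mathfrak{m}^{i+1}M$, the bead degrees along any string increase by at least one per step, so by iteratively refining the string generators $v_l$ one can absorb any ``skip'' in filtration degree. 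This yields indicator data $n_{l,d}\in\{0,1\}$ satisfying $\sum_d n_{l,d}=p_l$ for each $l$ and $\sum_l n_{l,d}=h_d$ for each $d$, after which the dominance inequality follows by direct double counting:
\[
\sum_{l=1}^k p_l=\sum_d\sum_{l=1}^k n_{l,d}\le \sum_d\min\!\left(k,\,\sum_l n_{l,d}\right)=\sum_d\min(k,h_d)=\sum_{l=1}^k (H(M)^\vee)_l.
\]

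The main obstacle is the existence of such an adapted Jordan basis in the non-graded local setting. Because the initial form of $\ell$ in $A^\ast$ may annihilate the leading form of a string generator, a Jordan string can in principle skip a filtration degree, and matching the per-degree bead counts exactly to $(h_d)$ requires careful iterative adjustment of the $v_l$'s. An alternative route is to bypass the adapted-basis construction by applying the semicontinuity of Jordan type (Lemma \ref{dominancelem}) in a one-parameter family deforming $(A,M,\ell)$ to its associated-graded counterpart $(A^\ast,\mathrm{gr}\,M,\ell^\ast)$, reducing the bound to the graded case where the adaptation is automatic and the conclusion reduces to the classical Lefschetz-type rank bound for a degree-raising nilpotent operator on a graded module.
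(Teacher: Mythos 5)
First, note that the paper itself does not prove this lemma: it is quoted verbatim from \cite[Theorem 2.5]{IMM1}, so there is no in-text argument to compare with; your proposal must therefore stand on its own, and it does not. The main route hinges on producing an honest Jordan basis (strings ending in zero) adapted to the $\mathfrak{m}$-adic filtration, i.e.\ one whose beads of order $d$ project to a basis of $\mathfrak{m}^dM/\mathfrak{m}^{d+1}M$ for every $d$, and you assert this can always be arranged ``by iteratively refining the string generators.'' That existence claim is false in general: Example \ref{ChrisMex} of this very paper (C.~McDaniel's algebra $A={\mathsf{k}}\{x,y\}/(x^2-xy^2,\,y^4)$ with $H(A)=(1,2,2,2,1)$) has no Jordan basis consistent with $H(A)$. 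What does exist in general is only a \emph{pre-Jordan} basis consistent with the Hilbert function \cite{Mar}, and pre-Jordan bases bound the Jordan type from \emph{below} (this is exactly how Lemma \ref{comparelem} is argued), so they cannot deliver the upper bound \eqref{eq:JTA}. Nor does the information you get for free from an arbitrary Jordan basis suffice: along a string the orders strictly increase (so among the $k$ longest strings at most $k$ beads have any given order $d$), and beads of order at least $d$ are independent inside $\mathfrak{m}^dM$; but without per-order independence modulo $\mathfrak{m}^{d+1}M$ these constraints alone do not force $\sum_{l\le k}p_l\le\sum_d\min(k,h_d)$ when $H$ is not monotone --- for instance the counts $m=(2,2,1)$ are compatible with $h=(1,5,1)$, $k=2$ under those constraints, yet exceed $\sum_d\min(2,h_d)=4$. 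So the adapted-basis step is not a technicality you can iterate away; it is the whole difficulty, and it genuinely fails.

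Your fallback route is also invalid because it runs the semicontinuity in the wrong direction. In a family degenerating $(A,M,\ell)$ to $(A^\ast,\mathrm{gr}\,M,\ell^\ast)$, the associated graded object is the \emph{special} fibre, so Lemma \ref{dominancelem} gives $P_{\ell,M}\ge P_{\ell^\ast,\mathrm{gr}M}$, a lower bound for $P_{\ell,M}$ --- precisely the paper's remark that $P_{A}\ge P_{A^\ast}$. An upper bound established in the graded case therefore does not transfer to $M$ by specialization; indeed Examples \ref{symHilbex} and \ref{cuteex} exhibit $P_{A,\ell}$ strictly larger than $P_{A^\ast,\ell}$. To prove the lemma you need a different mechanism (for instance, bounding for each $k$ the dimension of an $\ell$-stable subspace generated by $k$ elements over ${\mathsf{k}}[\ell]$ against the filtration, as in the cited proof of \cite[Theorem 2.5]{IMM1}), rather than either an adapted Jordan basis or a reduction to $A^\ast$.
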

Note that the associated graded algebra $\A^\ast$ of an Artinian algebra $\A$ is a specialization
of $\A$, hence $P_\A\ge P_{\A^\ast}$. 
\begin{lemma}[Dominance]
\label{specializelem}  
Suppose $\mathsf{k}$ is algebraically closed, and let $\AAA(w)$, $w\in W$ be a flat family of  finite AG algebras over a reduced, irreducible parameter space $W$. 
 Denote by $P^\ast_W$ the generic Jordan type of $\AAA^\ast (w)$ for $w$ in an open dense subset of $W$; and by $P^\ast_0$ the generic Jordan type
of $\A_0^\ast$ where $\A_0=\AAA(w_0)$. 
 Then $P^\ast_W\ge P^\ast_0$ in the dominance partial order of Definition~\ref{dominancedef}. 
\end{lemma}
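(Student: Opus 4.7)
The plan is to apply Lemma~\ref{dominancelem} directly to the family of associated graded algebras $\{\mathcal{A}^*(w)\}_{w\in W}$. To do so, I must first exhibit this collection as a flat family over $W$, and the main technical ingredient will be packaging the standard Rees/Gr\"obner degeneration $A\leadsto A^*$ uniformly over the parameter space $W$.

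First I would carry out the degeneration in families. The family $\mathcal{A}$ realizes each $\mathcal{A}(w)=R/I(w)$ as a point of a Grassmannian $\Grass(n,R/\mathfrak{m}^n)$, where $n$ is the common length of the fibers. Homogenizing the generators of $I(w)$ with respect to the $\mathfrak{m}$-adic filtration by an auxiliary parameter $t$ produces an ideal sheaf over $W\times\mathbb{A}^1$, cutting out a family $\widetilde{\mathcal{A}}\to W\times\mathbb{A}^1$ whose fiber over $(w,t)$ with $t\ne 0$ is isomorphic to $\mathcal{A}(w)$, and whose fiber over $(w,0)$ is the associated graded algebra $\mathcal{A}^*(w)$. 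Each fiber of $\widetilde{\mathcal{A}}$ is Artinian of length exactly $n$: for $t\ne 0$ this follows from the isomorphism, while for $t=0$ it follows from the fact that $A$ and $A^*$ always share the same length. Since $W\times\mathbb{A}^1$ is reduced and the fiber length is constant, Lemma~\ref{flatlem} yields that $\widetilde{\mathcal{A}}$ is flat over $W\times\mathbb{A}^1$.

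Next I would restrict to the closed subvariety $W\times\{0\}\cong W$. Flatness is preserved under base change, so $\widetilde{\mathcal{A}}|_{W\times\{0\}}$ is flat over $W$, which remains reduced and irreducible, and its fiber over $w$ is precisely $\mathcal{A}^*(w)$. Applying Lemma~\ref{dominancelem} to this flat family, the generic Jordan type is by definition $P^*_W$, while the fiber over $w_0$ is $\mathcal{A}_0^*$ with generic Jordan type $P^*_0$. The lemma then yields $P^*_W\ge P^*_0$ in the dominance partial order, as desired.

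The main obstacle is the algebraic realization of the deformation in Step~1. Although the pointwise Rees/Gr\"obner degeneration $A\leadsto A^*$ is classical, one must verify that it produces a genuine algebraic family over $W\times\mathbb{A}^1$, uniformly in $w$. The subtle point is that although $w\mapsto I^*(w)$ can be discontinuous (the initial ideal may jump when the Hilbert function of $\mathcal{A}^*(w)$ is not constant in $w$), the homogenized ideal sheaf on $W\times\mathbb{A}^1$ does vary algebraically, and the constancy-of-length criterion of Lemma~\ref{flatlem} then automatically enforces flatness across the potentially singular locus of the map $w\mapsto\mathcal{A}^*(w)$.
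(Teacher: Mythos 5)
Your overall route is the same as the paper's: exhibit $\{\AAA^\ast(w)\}_{w\in W}$ as a flat family (flatness via the constant-length criterion of Lemma \ref{flatlem}) and then quote Lemma \ref{dominancelem}; the paper does this directly, while you interpose a relative Rees/Gr\"obner degeneration over $W\times\mathbb A^1$. The genuine gap is your central claim that homogenizing the generators of $I(w)$ produces a family over $W\times\mathbb A^1$ whose fiber at $(w,0)$ is $\AAA^\ast(w)$ and whose fibers all have length $n$. Already for a single algebra this fails for an arbitrary generating set: the $t=0$ fiber is cut out by the lowest-degree forms of the chosen generators, and that ideal can be strictly smaller than $I(w)^\ast$, so the special fiber can be strictly longer than $n$. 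For instance $I=(x^2-y,\,y^2)\subset\kk\{x,y\}$ has $I^\ast=(y,x^4)$ of colength $4$, but the lowest-degree forms of the two generators generate only $(y)$; one must homogenize a standard (Macaulay) basis, here $\{x^2-y,\,x^4\}$. In the family the problem is worse: a standard basis can be chosen to vary algebraically in $w$ only where $\dim_\kk\bigl(I(w)\cap\maxR^{\,d}\bigr)$ -- equivalently the Hilbert function $H\bigl(\AAA(w)\bigr)$ -- is locally constant, and the Lemma does not assume constant Hilbert function. At a jump point there is provably \emph{no} flat family over $W$ with fibers exactly $\AAA^\ast(w)$: for the flat family of complete intersections $I(s)=(x^2-sy,\,y^2)$ over $W=\mathbb A^1$ one has $I(s)^\ast=(y,x^4)$ for $s\ne0$ but $I(0)^\ast=(x^2,y^2)$, and a morphism from a reduced irreducible $W$ to the punctual Hilbert scheme that is constant on a dense open set is constant, so the flat limit of the $\AAA^\ast(s)$ is not $\AAA^\ast(0)$. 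Thus your closing assertion that the constancy-of-length criterion ``automatically enforces flatness across the potentially singular locus'' is precisely the step that fails, and the restriction to $W\times\{0\}$ need not have the fibers you need. (This does not contradict the Lemma: in the example $P^\ast_W=(4)\ge(3,1)=P^\ast_0$.)

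To be fair, the paper's own proof is equally terse on this point, simply declaring the family $\AAA^\ast(w)$ flat of constant length; and in every application the family lies in a fixed $\Gor(H)$ or $\Gor(\D)$, so the Hilbert function is constant, in which case your construction is easily repaired: $\dim_\kk\bigl(I(w)\cap\maxR^{\,d}\bigr)$ is then constant, a standard basis (hence the relative homogenized ideal) varies algebraically, the special fibers are exactly $\AAA^\ast(w)$, and Lemma \ref{flatlem} together with Lemma \ref{dominancelem} finishes the argument exactly as you say. To get the Lemma in the stated generality you would need an extra step -- for example, comparing the generic Jordan type of $\AAA^\ast(w_0)$ with that of the flat limit of the $\AAA^\ast(w)$ along a curve through $w_0$ -- which your proposal does not supply; either add the constant-Hilbert-function hypothesis (harmless for the paper's uses) or make that comparison explicit.
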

\begin{proof} By Lemma \ref{flatlem} the family $\AAA^\ast(w)$, $w\in W$ is also flat, as the fibres have constant length.  By examining the behavior of $\m_{\AAA(w)}^{\,i}$ we may conclude that the  the family $\AAA^\ast (w)$, $w\in W$ indeed specializes to $A_0^\ast$.\par
This also follows from the principle that localization is a flat functor, and $A\to A^\ast$ is a localization. The conclusion now follows from Lemma \ref{dominancelem}.\par
\end{proof}\par
\subsection{Specialization of symmetric decomposition.}\label{semicontsec}
We recall from \cite[\S 4]{I2} that there is a partial order $\le$ on symmetric decompositions of a fixed Hilbert function $H$ such that $\overline{\Gor(\D)}\cap \Gor(\D^\prime)\not=\emptyset \Rightarrow \D\le \D^\prime$, that is, the symmetric decompositions $\D$ of the Hilbert function $H$ satisfy a semicontinuity property consistent with the partial order (Lemma~\ref{trianglelem}). This result follows from the semicontinuity of $\dim \bigl(\mathfrak{m}^i\cap (0:\mathfrak{m}^b)\bigr)$ when the Hilberrt function $H(\A)$ is fixed: fixing the Hilbert function fixes $\dim_{\sf k} \m^i$ and by duality fixes $\dim_{\sf k}(0:\m^b)$, so the intersection dimension is semicontinuous.\par
Given a symmetric decomposition $\mathcal{D}$ of a fixed AG Hilbert function $H$, we write
\begin{equation}
H\bigl(\mathfrak{m}^i/\bigl(\mathfrak{m}^i\cap (0:\mathfrak{m}^b)\bigr)\bigr)=(0,\ldots ,0, n_{i,b},n_{i+1,b},\ldots, n_{j,b}).
\end{equation}
We set $N_{i,b}=\dim_\kk \bigl(\mathfrak{m}^i/\bigl(\mathfrak{m}^i\cap (0:\mathfrak{m}^b)\bigr)\bigr)=\sum_{k=i}^jn_{k,b}$. We have that
\begin{equation}
N_{i,b}=\sum_{a=0}^{j-b-i}\sum_{k=i}^{j-b-a}H(a)_k,
\end{equation}
the sum of the entries of $\D$ (as usually arranged) in a triangle bounded on the left by the leading degree $i$ and on the right by the rising diagonal ending at $H(0)_{j-b}$.\par

\begin{lemma}
\label{trianglelem}
\cite[Lemmas 1.13, 4.1A,B]{I2} 
Let $\AAA(w)$, $w\in W$ be a (flat) family of  Artinian Gorenstein algebras of fixed Hilbert function $H$, parametrized by a scheme $W$, and fix integers $i,b,m$. 
The condition $N_{i,b}\bigl(\AAA(w)\bigr)\le m$ is a closed condition on the family $\AAA(w)$, $w\in W$. 
\end{lemma}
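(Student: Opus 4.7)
The plan is to prove that $N_{i,b}(w):=N_{i,b}(\AAA(w))$ is a lower semicontinuous function on $W$, which is precisely the assertion that $\{w\in W : N_{i,b}(\AAA(w))\le m\}$ is closed. The starting identity is
\[
N_{i,b}(w) = \dim_{\kk}\m^{\,i}_{\AAA(w)} - \dim_{\kk}\bigl(\m^{\,i}\cap (0:\m^{\,b})\bigr)_{\AAA(w)},
\]
so once the first summand is shown to be constant in $w$, lower semicontinuity of $N_{i,b}$ is equivalent to upper semicontinuity of the intersection dimension on the right.

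To establish the needed constancy statements I would use the hypothesis that $H$ is fixed. Directly, $\dim_{\kk}\m^{\,i}_{\AAA(w)}=\sum_{k\ge i} H_k$ is independent of $w$. For $(0:\m^{\,b})$, apply the Gorenstein pairing fiberwise: in each Artinian Gorenstein fiber $\AAA(w)$, the socle pairing is nondegenerate and identifies $(0:\m^{\,b})$ with the annihilator of $\m^{\,b}$, giving
\[
\dim_{\kk}(0:\m^{\,b})_{\AAA(w)} = \dim_{\kk}\AAA(w) - \dim_{\kk}\m^{\,b}_{\AAA(w)} = \sum_{k<b}H_k,
\]
again constant on $W$. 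Combined with flatness of the family $\AAA(w)$, this lets one realize $\m^{\,i}$ and $(0:\m^{\,b})$ (after a Zariski-local trivialization) as subsheaves of constant rank inside a locally free $\OO_W$-sheaf $\AAA$ whose fiber at $w$ is $\AAA(w)$.

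With $V_w=\m^{\,i}_{\AAA(w)}$ and $U_w=(0:\m^{\,b})_{\AAA(w)}$ two subbundles of constant rank inside $\AAA(w)$, the final step is the standard semicontinuity of intersection. The sum morphism $V_w\oplus U_w\to \AAA(w)$ has rank equal to $\dim(V_w+U_w)$, and rank of a morphism of vector bundles is lower semicontinuous (the locus where rank is $\ge r$ is cut out as the complement of the vanishing of all $r\times r$ minors, hence open). The identity $\dim(V_w\cap U_w)=\dim V_w+\dim U_w-\dim(V_w+U_w)$ together with constancy of the first two terms then turns lower semicontinuity of $\dim(V_w+U_w)$ into upper semicontinuity of $\dim(V_w\cap U_w)$, and so into lower semicontinuity of $N_{i,b}(w)$.

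The main obstacle is the bookkeeping of the previous paragraph: verifying that $\m^{\,i}$ and $(0:\m^{\,b})$ really assemble into $\OO_W$-subsheaves of constant rank, rather than just varying fiberwise subspaces. This is a consequence of flatness of $\AAA(w)$ combined with the two constancy statements, and it is exactly why the hypothesis of fixed Hilbert function must be invoked twice — once for $\m^{\,i}$ and once, via Gorenstein duality in each fiber, for $(0:\m^{\,b})$.
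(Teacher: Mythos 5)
Your proof is correct and takes essentially the same approach as the paper: fixing $H$ together with Gorenstein duality makes $\dim_{\kk}\m^{\,i}$ and $\dim_{\kk}(0:\m^{\,b})$ constant, and then $N_{i,b}\le m$ becomes the closed condition that two subspaces of fixed dimension intersect in dimension at least $\dim_{\kk}\m^{\,i}-m$. Your bundle-rank formulation of the intersection semicontinuity simply spells out the final step that the paper asserts without detail.
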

\begin{proof} Fixing $H$ fixes the vector space dimensions of $\mathfrak{m}^i$ and $(0:\mathfrak{m}^b)$ (by duality  $\dim_{\mathsf{k}}{(0:\mathfrak{m}^b)}=\dim_{\mathsf{k}}\A-\dim_{\mathsf{k}}\mathfrak{m}^b$). Thus $N_{i,b}\le m$ expresses the condition that the two subspaces $\m_{\AAA(w)}^{\,i}$ and ${\bigl(0:\m_{\AAA(w)}^{\,b}\bigr)}$ of $\AAA(w)$, each having a fixed dimension, intersect in dimension at least ${(\dim_{\kk}\mathfrak{m}^i-m)}$. This is a closed condition on $w\in W$.
\end{proof}\par
See Figure \ref{symdecfig}, where $N_{2,3}$ for $\D_1$ is $8$, for $\D_2$ is $7$ and for $\D_3$ is 6, implying there can be no specializations from $\Gor(\D_i)$ to $\Gor({\D_j})$ for $i>j$, when $H=(1,3,4,4,3,2,1)$. We define a partial order on symmetric decompositions, 
\begin{equation}\label{PODeq}
\D_1\ge \D_2 \text { if }  N_{i,b}\bigl(\D_1)\bigr)\ge N_{i,b}\bigl(\D_2)\bigr) \text{ for each pair $(i,b)$}.
\end{equation}
\begin{corollary}[$H_\A(0)$ as obstruction to deformation in $\Gor(H)$]
\label{D0cor}
Suppose that $\D_s$, $\D_t$ are two symmetric decompositions of $H$ such that  $\mathcal{D}_s (0)_i< \D_t(0)_i$, Then $\overline{\Gor({\D_s})}\cap \Gor(\D_t)=\emptyset.$ 
\end{corollary}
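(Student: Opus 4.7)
The plan is to apply Lemma~\ref{trianglelem} with a pair $(i,b)$ chosen so that $N_{i,b}$ isolates the single entry $H(0)_i$, thereby converting the hypothesized strict inequality between $\D_s(0)_i$ and $\D_t(0)_i$ into an incompatibility with semicontinuity.

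First I compute $N_{i,b}$ at the special value $b=j-i$. From the formula
\[
N_{i,b}=\sum_{a=0}^{j-b-i}\sum_{k=i}^{j-b-a}H(a)_k,
\]
the choice $b=j-i$ collapses the outer sum to $a=0$ and the inner sum to $k=i$, giving $N_{i,\,j-i}(\D)=H_\D(0)_i=\D(0)_i$ for any symmetric decomposition $\D$ of $H$. Consequently the hypothesis $\D_s(0)_i<\D_t(0)_i$ reads
\[
N_{i,\,j-i}(\D_s)<N_{i,\,j-i}(\D_t).
\]

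Now suppose for contradiction that there is a point $p\in\overline{\Gor(\D_s)}\cap\Gor(\D_t)$. I choose an irreducible curve $C\subset\overline{\Gor(\D_s)}$ through $p$ whose generic point lies in $\Gor(\D_s)$; such a $C$ is obtained as a generic one-parameter slice of any irreducible component of $\overline{\Gor(\D_s)}$ containing $p$. All fibres of the tautological family $\AAA(w)$, $w\in C$, have Hilbert function $H$ and therefore constant length, so by Lemma~\ref{flatlem} the family is flat and Lemma~\ref{trianglelem} applies. Taking $m=\D_s(0)_i$, the locus
\[
\{\,w\in C:N_{i,\,j-i}\bigl(\AAA(w)\bigr)\le\D_s(0)_i\,\}
\]
is closed in $C$; it contains a non-empty open subset consisting of points with decomposition $\D_s$ (on which $N_{i,\,j-i}=\D_s(0)_i$), so by irreducibility it equals $C$. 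Evaluating at $p$ gives $\D_t(0)_i=N_{i,\,j-i}\bigl(\AAA(p)\bigr)\le\D_s(0)_i$, contradicting the hypothesis.

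The only mildly delicate point is the geometric reduction to a flat family over an irreducible curve realising $p$ as a limit of $\Gor(\D_s)$-points; once this is in place, the identity $N_{i,\,j-i}=H(0)_i$ turns the statement into an immediate consequence of the closed-locus form of Lemma~\ref{trianglelem}.
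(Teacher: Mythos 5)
Your proposal is correct and is essentially the paper's own argument: the proof there is exactly ``take $b=j-i$ in Lemma \ref{trianglelem}'', which collapses $N_{i,j-i}$ to $H(0)_i$ and makes the conclusion an immediate consequence of the closedness of the condition $N_{i,b}\le m$. Your additional reduction to a flat one-parameter family through the limit point merely spells out the semicontinuity step that the paper leaves implicit.
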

\begin{proof} Take $b=j-i$ in the Lemma \ref{trianglelem}
\end{proof}\par
In the Figure \ref{symdecfig}, the  Corollary for $i=3$ shows again that there is no specialization from $\Gor(\D_i)$ to $\Gor(\D_j)$ for $i>j$, with $H=(1,3,4,4,3,2,1)$.
\par

\begin{figure}
\center{$N_{2,3}$:}\vskip 0.3cm
$\begin{array}{ccccccc}
1&2&\color{red}{3}&\color{red}{4}&3&2&1\\
&1&\color{red}{1}&&&&\\
&&\mathcal{D}_1&&&
\end{array}$\qquad
$\begin{array}{ccccccc}
1&2&\color{red}{3}&\color{red}{3}&3&2&1\\
&1&\color{red}{1}&1&&&\\
&&\mathcal{D}_2&&&
\end{array}$\vskip 0.4cm\par
$\begin{array}{ccccccc}
1&2&\color{red}{2}&\color{red}{2}&2&2&1\\
&1&\color{red}{2}&1&&&\\
&&\mathcal{D}_3&&&
\end{array}$\vskip 0.4cm
We have $\overline{\Gor(\D_3)}\cap\bigl(\Gor(\D_2)\cup \Gor(\D_1)\bigr)=\emptyset$,\vskip 0.2cm\par
\qquad $\overline{\Gor(\D_2)}\cap\Gor(\D_1)=\emptyset$.\vskip 0.2cm
\caption{Obstruction to specialization of symmetric decompositions, $H=(1,3,4,4,3,2,1)$ (Lemma \ref{trianglelem}).}\label{symdecfig}
\end{figure}\vskip 0.2cm
As mentioned, there is a unique symmetric decomposition of $H$ when $H=(1,2,\ldots )$ is a Gorenstein sequence of codimension two (so the Hilbert function of a complete intersection).  Not much is known about the poset $\mathfrak D(H)$ of symmetric decompositions of a given Gorenstein sequence $H$ of codimension at least three:  see Question~\ref{posetDques}. We do not even know which (non-graded) Gorenstein sequences in codimension three occur for complete intersections!
\subsection{When the graded quotient  $Q_\A(0)$ of $\A$ has codimension two.}\label{twotothreesec}
\par We first record some well-known facts about parametrizing  AG algebras of codimension two (Lemmas \ref{ZTlem} and \ref{cod2lem}). We have already stated the results we will need about specialization of Jordan type (Lemma \ref{dominancelem}) and the semicontinuity of symmetric decompositions (Lemma~\ref{trianglelem}).
Let
\begin{equation}\label{Tcod2eq}
T=(1,2,\ldots, d,t_d,t_{d+1},\ldots ,t_j,0), \text { where } d\ge t_d\ge \cdots \ge t_j>0,
\end{equation} be a sequence possible for the Hilbert function of an Artinian quotient of $R={\mathsf{k}}\{x,y\}$ (called a codimension two O-sequence) and let $e_i=t_{i-1}-t_i, n=|T|=\sum t_i$. Here $d$ is the order of ideals defining algebras of Hilbert function $T$: $\mathfrak{m}^d\supset I$ but $\mathfrak{m}^{d+1}\not\supset I$. Recall that a codimension two Artinian Gorenstein algebra is a complete intersection. Then we have (for the case $Z_T$ in (B.) see also \cite{Bri}),
\begin{lemma}See \cite[Theorem 2.12]{I1}\label{ZTlem}
\begin{enumerate}[A.]
\item \cite{Mac1} The sequence $T$ of Equation \eqref{Tcod2eq} can occur for an Artinian Gorenstein quotient of the local ring $R={\mathsf{k}}\{x,y\}$ if and only if $e_i\le 1$ for each $i\ge d$.
\item Let $Z_T$ parametrize the Artinian algebra quotients $A=R/I$ having Hilbert function $T$, and $G_T$ parametrize the graded quotients having Hilbert function $T$.
Then $Z_T$, $G_T$ are smooth irreducible varieties, $\pi: Z_T\to G_T: \pi(\A)=\Gr_{\m}(\A)=\A^\ast$ is a bundle with affine space fibres over the projective variety $G_T$; the natural inclusion map $G_T\to Z_T$ is a zero-section of $\pi$. Each  of $Z_T$, $G_T$ has a cover by opens in affine spaces. We have 
\begin{align}
\dim Z_T&=n-\sum_{i\ge d} e_i(e_i+1)/2\notag\\
&=n-d-\sum e_i(e_i-1)/2;\label{dimZteq}\\
\dim G_T&=\sum_{i\ge d} (e_i+1)(e_{i+1}).\label{dimGteq}
\end{align}
\end{enumerate}
\end{lemma}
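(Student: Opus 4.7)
For part (A), my plan is to combine Macaulay's classical growth theorem for Hilbert functions of Artinian quotients of a power series ring with the Macaulay duality of Lemma~\ref{Macaulalyduallem}. The necessity of $e_i\le 1$ for $i\ge d$ is the codimension-two specialization of Macaulay's bound, which for $t_i\le i$ collapses to $t_{i+1}\ge t_i-1$. For sufficiency, given such a $T$, I will construct an explicit Macaulay dual generator $F\in S=\kk_{DP}[X,Y]$ of degree $j$ as a sum of distinct monomials $X^{a_k}Y^{b_k}$ arranged along the anti-diagonals prescribed by the staircase of $T$: in each degree $i\ge d$ with $e_i=1$ one adjoins a new leading monomial. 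By Lemma~\ref{Macaulalyduallem}, the algebra $\A=R/\Ann F$ is Artinian Gorenstein, and a direct computation of the contraction module $R\circ F\subset S$ verifies $H(\A)=T$. In codimension two every AG algebra is automatically a complete intersection, so this already produces the required algebra.

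For part (B), the plan rests on Brian\c{c}on's result that the punctual Hilbert scheme $\Hilb^n(\kk\{x,y\})$ is smooth and irreducible, and that its stratification by Hilbert function $T$ yields smooth irreducible strata -- these strata are exactly the $Z_T$. For $G_T$, I describe it cell by cell inside a product of Grassmannians of homogeneous pieces of $R$: each graded ideal $I_0$ with $H(R/I_0)=T$ is built up degree by degree, and in each degree $i$ with $e_i=1$ the contribution comes from choosing a new generator modulo $R_1\cdot (I_0)_{i-1}$; a careful parameter count then produces the factors $(e_i+1)(e_{i+1})$ and yields formula~\eqref{dimGteq}. For the projection $\pi\colon Z_T\to G_T$, $\A\mapsto \A^\ast=R/I^\ast$, I analyze the fibre over a graded $I_0$ by writing each minimal generator of $I$ as a lift $f^{(d_k)}+f^{(d_k+1)}+\cdots$ of a generator of $I_0$, and then quotienting by continuous $\kk$-algebra automorphisms of $R$ preserving the $\m$-adic filtration. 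A parameter count in which the Koszul relations of the complete intersection $I_0$ contribute a correction of $e_i(e_i+1)/2$ in degree $i$ identifies the fibre as a Zariski affine space and produces formula~\eqref{dimZteq}. The inclusion $G_T\hookrightarrow Z_T$ sending a graded ideal to itself is then visibly a zero-section of $\pi$, and the open affine cover of $Z_T$ is inherited from that of $G_T$ together with the affine fibres.

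The main obstacle will be the fibre analysis of $\pi$: proving that the space of liftings of the generators of $I_0$, modulo filtration-preserving automorphisms of $R$, is a Zariski affine space of the predicted dimension. This is a Schlessinger-style deformation argument, and the precise bookkeeping between the lifting parameters in each degree and the Koszul/syzygy redundancies of the CI $I_0$ is the technical heart of the proof. Once the local affine-bundle structure of $\pi$ is established, global smoothness and irreducibility of $Z_T$ follow from those of $G_T$ together with the affine fibration, and the dimension formula \eqref{dimZteq} follows by adding the fibre dimension to $\dim G_T$; the equivalent form $\dim Z_T=n-d-\sum e_i(e_i-1)/2$ then drops out from the identity $\sum_{i\ge d} e_i=d$.
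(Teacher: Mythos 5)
First, note that the paper does not prove this lemma at all: it is stated as a recollection, with part (A) cited to \cite{Mac1} and part (B) to \cite[Theorem 2.12]{I1} (with \cite{Bri} in the background), so your sketch has to be judged against those sources rather than against an argument in the text. Measured that way, the main step of your part (A) fails. Macaulay's growth theorem only bounds the Hilbert function from \emph{above}: in two variables, once $t_i\le i$ it gives $t_{i+1}\le t_i$, and it never yields the lower bound $t_{i+1}\ge t_i-1$. Indeed $(1,2,3,1)$ is a perfectly good codimension-two O-sequence for an Artinian quotient of $\kk\{x,y\}$, yet it violates $e_i\le 1$; the condition $e_i\le 1$ is special to the Gorenstein (equivalently, complete intersection) case and needs a genuinely different argument — Macaulay's plane-curve analysis \cite{Mac1}, or the uniqueness of the symmetric decomposition in codimension two, or standard-basis theory as in \cite{Bri,I1}. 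Your sufficiency construction is also under-specified: in the local case a ``sum of monomials along the staircase'' can collapse (e.g.\ $F=X^{[3]}+Y$ gives $H=(1,1,1,1)$, not $(1,2,1,1)$, since $x^3-y\in\Ann F$), so one must choose the summands to realize the unique symmetric decomposition of $T$ and actually verify $H(R\circ F)=T$.

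In part (B) there are two further problems. First, invoking ``Brian\c{c}on's result that the stratification by Hilbert function yields smooth irreducible strata'' is circular: that statement \emph{is} the lemma ([I1, Theorem 2.12]); Brian\c{c}on proves irreducibility of the punctual Hilbert scheme (which, incidentally, is not smooth in general), and the smoothness and affine fibration of $Z_T$ over $G_T$ is exactly what has to be established, via standard bases/normal patterns. Second, your fibre analysis of $\pi$ is set up with the wrong equivalence: $Z_T$ parametrizes ideals (quotients $R\to A$), not isomorphism classes of algebras, so the fibre over a graded $I_0$ is the set of ideals $I$ with $I^\ast=I_0$; one normalizes the tails added to the generators of $I_0$ modulo the redundancy of generating the same ideal, and no quotient by filtration-preserving automorphisms of $R$ is taken — doing so would both over-identify distinct points of $Z_T$ and fail to account for the actual ambiguity. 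Relatedly, for a general $T$ as in \eqref{Tcod2eq} the graded ideal $I_0$ is not a complete intersection (it is Hilbert--Burch determinantal, with more than two generators as soon as some $e_i\ge 2$), so the correction $\sum_{i\ge d}e_i(e_i+1)/2$ in \eqref{dimZteq} cannot be derived from ``Koszul relations of the CI $I_0$''; it comes from the normal-form count for the tails in the standard-basis description. The arithmetic you do check — $\sum_{i\ge d}e_i=d$ and the equivalence of the two expressions in \eqref{dimZteq} — is fine, but the technical heart (the affine normal form for the fibres, and the cell decomposition giving \eqref{dimGteq}) is precisely what is missing, and it is what \cite{I1} supplies.
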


\begin{lemma}[Codimension two implies strong Lefschetz]\label{cod2lem}
Let $\A$ be a local Artinian algebra of codimension two, thus $A={\mathsf{k}}\{x,y\}/I$ and assume that $\cha \mathsf{k}$ is either zero or greater than the socle degree of $\A$. Then $A$ has a linear element $\ell$ that is strong Lefschetz.
\end{lemma}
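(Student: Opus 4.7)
The plan is a two-step reduction. First I would reduce the local (possibly non-graded) case to the graded case using semicontinuity of Jordan type under the standard Rees degeneration; then I would dispatch the graded codimension-two case by invoking the classical strong Lefschetz theorem for graded Artinian quotients of $\kk[x,y]$ in characteristic zero or characteristic exceeding the socle degree.

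For the reduction, suppose the associated graded algebra $\A^\ast = \Gr_\m(\A)$ admits a strong Lefschetz linear element $\ell^\ast \in \A^\ast_1$, so that $P_{\A^\ast,\ell^\ast} = H(\A^\ast)^\vee = H(\A)^\vee$. Choose any lift $\ell \in \m_\A$ representing $\ell^\ast$ under the projection $\m \twoheadrightarrow \m/\m^2$. The Rees construction provides a flat one-parameter family with generic fibre $\A$ and special fibre $\A^\ast$, so Lemma~\ref{specializelem} (applied via Lemma~\ref{dominancelem}) yields
\[
P_{\A,\ell} \;\ge\; P_{\A^\ast,\ell^\ast} \;=\; H(\A)^\vee
\]
in the dominance partial order. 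Combining with the opposite bound $P_{\A,\ell} \le H(\A)^\vee$ from Lemma~\ref{SLupperbdlem} sandwiches equality, so $(\A,\ell)$ is a strong Lefschetz pair.

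For the graded step I would show that every graded Artinian algebra $B = \kk[x,y]/I$ with Hilbert function $T$ satisfying \eqref{Tcod2eq} admits a strong Lefschetz linear form under the characteristic hypothesis. By Lemma~\ref{ZTlem}.B the parameter space $G_T$ is smooth and irreducible, and by Lemma~\ref{ZTlem}.A it contains the lex-segment monomial ideal realising $T$, as well as a complete intersection $(x^a,y^b)$ when $T$ admits one. On such a reference algebra, strong Lefschetz for $\ell^\ast = x + cy$ with generic $c$ is verified directly on the monomial basis: the multiplication maps $\mu_{(\ell^\ast)^k}$ involve binomial coefficients $\binom{m}{n}$ with $m \le j$, each nonzero in $\kk$ by the hypothesis $\cha\kk = 0$ or $\cha\kk > j$. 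One then propagates strong Lefschetz to every $B \in G_T$ either via the Hilbert-Burch presentation (whose degree pattern is constant across $G_T$ and controls the rank of $\mu_{(\ell^\ast)^k}$) or by citing the classical strong Lefschetz theorem for graded codimension-two Artinian algebras.

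The main obstacle is this last point: propagating strong Lefschetz from a single reference model to every point of $G_T$, rather than only to its generic point. Semicontinuity and irreducibility of $G_T$ give the generic statement for free, but the uniform assertion requires the classical graded codimension-two theorem (or the Hilbert-Burch analysis in its stead). The characteristic hypothesis is sharp: in $\kk[x,y]/(x^p,y^p)$ over $\cha\kk = p$ one has $(x+cy)^p = x^p + c^p y^p = 0$, so no linear form attains the Jordan block of size $2p-1$ dictated by $T^\vee$, and strong Lefschetz fails.
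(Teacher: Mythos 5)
Your proof is correct in substance but takes a different route from the paper, whose entire proof is a citation: it invokes J.~Brian\c{c}on's standard basis theory in characteristic zero (and its characteristic-$p$ analogue in \cite{I1}), which handles the local algebra directly, with no passage through the associated graded algebra. You instead reduce to the graded case: lift a strong Lefschetz linear form $\ell^\ast$ of $\A^\ast$ to a linear form $\ell\in\m$, sandwich $P_{\A,\ell}$ between $P_{\A^\ast,\ell^\ast}=H(\A)^\vee$ and the upper bound of Lemma~\ref{SLupperbdlem}, and quote the classical graded codimension-two strong Lefschetz theorem. This buys a self-contained argument from ingredients already in the paper plus one classical graded fact, at the cost of two wobbles you should tidy. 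First, Lemmas~\ref{dominancelem} and~\ref{specializelem} concern \emph{generic} Jordan types of fibres in a flat family; what you actually need is the pointwise inequality $P_{\A,\ell}\ge P_{\A^\ast,\ell^\ast}$ for your chosen lift, which follows not from those lemmas as stated but from the elementary fact that multiplication by $\ell^k$ is filtration-compatible and its associated graded map is multiplication by $(\ell^\ast)^k$, so ranks can only drop -- this is the fact behind the paper's remark that $P_\A\ge P_{\A^\ast}$, and it also guarantees your strong Lefschetz element is genuinely linear, which the generic-Jordan-type route alone would not. Second, your proposed Hilbert--Burch propagation is flawed: the generator degrees are \emph{not} constant across $G_T$ (e.g.\ for $T=(1,2,2,1)$ both the complete intersection $(x^2,y^3)$ and the three-generator ideal $(x^2,xy^2,y^4)$ occur), and semicontinuity over the irreducible $G_T$ only yields the statement at a generic point; so the uniform graded statement must indeed rest on the classical codimension-two theorem (your stated fallback, and essentially the content of the paper's alternative references \cite[Lemma~2.14]{IMM1}, \cite[Theorem~2.9]{I2}), not on the degree-pattern argument.
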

 \begin{proof} This arises from standard basis theory of  J.~Brian\c{c}on \cite{Bri} (in characteristic zero), or its analogue in characteristic $p$ \cite{I1}. See also \cite[Lemma~2.14]{IMM1} or \cite[Theorem 2.9]{I2}). 
\end{proof}\par
 We can use Lemma \ref{cod2lem} as part of ruling out strong Lefschetz for  AG algebras $\A\in \Gor(\D)$ for certain decompositions where $\A$ has codimension three but $\A(0)$ has codimension two. See Example~\ref{2to3ex}. 
 \subsection{Jordan type and symmetric decomposition.}\label{JTandSDsec}
We are concerned here with the interplay between Jordan type and the symmetric decomposition $\mathcal{D}(\A)$ of the Hilbert function $H(\A)$. Here, we have many questions, including whether and how we can define a ``Jordan order type'' for non-homogeneous AG algebras. Examples~\ref{ChrisMex} and \ref{2to3ex} illustrate some of the issues.\footnote{In work in progress with Johanna Steinmeyer \cite{IMS} we have concluded that although there is a notion of sequential Jordan type for non-homogeneous AG algebras, our several tries at a ``Jordan order type'' have not succeeded, see Question \ref{Jordanordertypeques} .}
\begin{definition}\label{preJordandef}
A \emph{pre-Jordan basis} $\mathfrak{B}$ of $\A$ with respect to a non-unit $\ell\in \A$, of Jordan type $P_\ell=(p_1,p_2,\ldots, p_s)$ is a basis of $\A$ as a vector space over $\kk$ 
\begin{equation}\label{preJordaneq}
\mathfrak{B}=(\ell^k b_i : 1\le i\le s,\, 0\le k<p_i )
\end{equation}
where $\ell^{p_i}b_i\in {\mathsf{k}}[\ell]\cdot \langle b_1,b_2,\ldots ,b_{i-1}\rangle$. Recall that if for each $i\in [1,s]$, $\ell^{p_i}b_i=0$ then $\mathfrak{B}$ is a Jordan basis for $(\A,\ell)$. We will call a sequence $(b_i,\ell b_i,\ldots, \ell^{p_i-1}b_i)$ as in Equation \ref{preJordaneq} a \emph{pre-string} for the pre-Jordan basis, whose beads are  $\ell^a b_i, a\in [0,p_i-1]$.
\end{definition}\vskip 0.2cm\par\noindent
\begin{remark}\label{multrem}
We will have use for the fact, for $A$ an AG algebra (not necessarily graded) and  $\ell, h\in A$,
\begin{equation}\label{baseeq}
\text {If }\ell\in \m^a \text { and } h\in \m^b\cap (0:\m^c) \text { then }\ell\cdot h\in \m^{a+b}\cap (0:\m^{c-a}).
\end{equation}
Also if $\ell\in (0:\m^e)$ then $\ell h\in (0:m^{e-b})$.\par
As a consequence, if $h\in A$ represents an element of $Q(a)_i$ and $\ell h$ represents an element of $Q(b)_k$ where $b<a$ then $k\ge i+2$.
\end{remark}
Our next example $A$ has a Jordan basis compatible with the $Q(a)$ decomposition.
\begin{example}\label{preJordanex}
Let $R={\kk}\{x,y\}$, take $f = X^{[4]}Y + Y^{[4]}$, and let $\A=R/\Ann f$. Then
$\Ann f=(xy^2,\, y^3-x^4)$, the Hilbert function is $H(\A)=(1,2,3,2,2,1)$, and its symmetric decomposition is
\begin{equation*}
\mathcal{D}=\bigl(H(0)=(1,2,2,2,2,1),\, H(1)=(0,0,1,0,0)\bigr).
\end{equation*}
Here $P_{\A,x}=(5,5,1)$: the following strings give a Jordan basis for $(\A,x)$ whose elements can be partitioned into bases for each $Q(a)_i$, note that $x^5=xy^2=0$ in $\A$.
\begin{center}
{\renewcommand{\arraystretch}{1.1}
\begin{tabular}{|c|l|}
\hline
$Q(0)$ & 
$
\begin{matrix}
1 &x &x^2 &x^3 &x^4 &&\\
 &y &xy &x^2y &x^3y&x^4y
\end{matrix}
$\\
\hline
$Q(1)$ & 
$
\begin{matrix}
\phantom{1} & \phantom{x} &y^2
\end{matrix}
$\\
\hline
\end{tabular}}
\end{center}
Here $P_{\A,\ell}=(6,4,1)$ for $\ell=x+y$. The following is a pre-Jordan basis for $(\A,\ell)$ again whose elements can be partitioned into bases for each $Q(a)_i$
\begin{center}
{\renewcommand{\arraystretch}{1.1}
\begin{tabular}{|c|l|}
\hline
$Q(0)$ & 
$
\begin{matrix}
1 &\ell &\ell^2 &\ell^3 &\ell^4 &\ell^5\\
 &y &\ell y &\ell^2y &\ell^3y&
\end{matrix}
$\\
\hline
$Q(1)$ & 
$
\begin{matrix}
\phantom{1} & \phantom{y} &y^2
\end{matrix}
$\\
\hline
\end{tabular}}
\end{center}
It is not a Jordan basis, because ${\ell\cdot\ell^3y=x^4y\ne0}$ and ${\ell y^2=x^4\ne0}$. However, we can change this basis and get a Jordan basis that agrees with the $Q(a)$ decompositon of $A^\ast$:
\begin{center}
{\renewcommand{\arraystretch}{1.3}
\begin{tabular}{|c|l|}
\hline
$Q(0)$ & 
$
\begin{matrix}
1 &\ell &\ell^2 &\ell^3 &\ell^4 &\ell^5\\
 &y-\tfrac{1}{5}\ell &\ell y-\tfrac{1}{5}\ell^2 &\ell^2y-\tfrac{1}{5}\ell^3 &\ell^3y-\tfrac{1}{5}\ell^4&
\end{matrix}
$\\
\hline
$Q(1)$ & 
$
\begin{matrix}
\phantom{1} & \phantom{y-\tfrac{1}{5}\ell} &y^2-(x^3-yx^2)
\end{matrix}
$\\
\hline
\end{tabular}}
\end{center}

Here $(0:\m^3)=(x^3,x^2y,y^2)$ so $h=x^3-yx^2=x^2(x-y)\in \m^3\cap(0:\m^3)$.
Now $(x+y)(x^2)(x-y)=x^2(x^2-y^2)=x^4$, so we have $\ell\cdot  (x^3-h)=0$.\par
It is easy to see that the linear form $y$ has a Jordan basis for $P_y=(5,2,2,2)$ since $xy^2=0$.
Thus every linear form has a Jordan basis compatible with the $Q(a)$ decomposition. As we shall see later in Examples \ref{ChrisMex} and \ref{2to3ex} this in general does not occur.
\end{example}
\begin{question} How does the symmetric decomposition $\D(\A)$ restrict the Jordan type of $\A$? In particular can the strong Lefschetz property depend on the decomposition $\D(\A)$? 
\end{question}\noindent
{\bf Answer.} The answer to the second question is ``Yes'', see Theorem \ref{2.1thm} where only the third decomposition is compatible with strong Lefschetz. We develop partial answers to the first question in many of the results and examples of this paper.\vskip 0.2cm
We have the following counterexample due to Chris McDaniel, of a non-homogeneous Artinian Gorenstein algebra $A$ for which there is no Jordan basis consistent with $H(A)$ so, in particular, no Jordan basis consistent with the $Q(a)$ decomposition.
\begin{example}(C. McDaniel, private communication)\label{ChrisMex} Let $ F=XY^3+X^2Y\in k_{DP}[X,Y]$, $A={\kk}\{x,y\}/I$, $I=\Ann F=(x^2-xy^2, y^4)$. Note
$x^3=(x+y^2)(x^2-xy^2)+xy^4\in I$. Then  $H(A)=(1,2,2,2,1)$ and it can be readily shown that $A$ has no Jordan basis consistent with $H(A)$.
Note that since $A$ has a symmetric Hilbert function, the $Q(a)$ decomposition of $A^\ast$ has a single component $Q_A(0)=A^\ast$.
Thus, $A$ does not have a basis consistent with the $Q(a)$ decomposition; but $A^\ast$, being graded, does have a Jordan basis consistent with its Hilbert function.
\end{example}
We also have a result of the second author that every Artinian algebra has a pre-Jordan basis consistent with the Hilbert function \cite{Mar}.

The next Example \ref{2to3ex} is of a codimension three AG algebra that for a particular linear form does not have a pre-Jordan basis that is consistent with the $Q(a)$ decomposition. This is an apparent complication in an attempt to define a notion of Jordan order type for non-graded AG algebras (Question \ref{Jordanordertypeques}). \begin{example}\label{2to3ex}
Let $R=\mathsf{k}\{x,y,z\}$, take $f = X^4Y^4 + X^3Y^3Z + Z^5$ and let $A=R/\Ann f$. Then (from Macaulay) $\Ann f=(yz^2,xz^2, y^4z, x^4z, y^5, xy^4-y^3z, x^4y-x^3z, x^5, x^3y^3-x^2y^2z-z^4)$, $H(A)=(1,3,6,8,7,5,3,2,1)$ of length $36$. Then we have $Q(0)=R/(x^5,y^5,z)$,  $Q(2)\cong  z\cdot  R/(x^3,y^3,z^2)$ and $Q(3)=\langle z^2,z^3\rangle$.
\begin{equation*}
\mathcal{D}=\bigl(H(0)=(1,2,3,4,5,4,3,2,1),\, H(2)=(0,1,2,3,2,1,0),\, H(3)=(0,0,1,1,0,0)\bigr).
\end{equation*}
We wish to show that the multiplication by $\ell = z\in A$ does not admit a pre-Jordan basis {inducing} a basis of the $Q(a)$ decomposition.
Suppose by way of contradiction that there is a pre-Jordan basis $B$ that {can be partitioned into subsets} inducing a basis for each $Q(a)_i$. Since ${z^5=x^4y^4\ne0}$ in $A$, but ${z^6=0}$, there is a string of length $6$ in $B$. Also, we have ${z^5\m=0}$, so this string must start in order $0$, and we can scale it such that it starts in ${1+b}$, with ${b\in\m}$. Its second element is ${z+zb}$ and has order $1$. On the other hand, ${(z+zb)\circ f=X^3Y^3+Z^4+b\circ(X^3Y^3+Z^4)}$. Since ${b\circ(X^3Y^3+Z^4)}$ is a polynomial of degree at most $5$, it cannot cancel the term $X^3Y^3$, so ${(z+zb)\circ f}$ has degree $6$, which tells us that ${z+zb\in(0:\m^7)\setminus(0:\m^6)}$. Thus, ${\m\cap(0:\m^7)}$ is the smallest intersection of powers of the maximal ideal by Loewy ideals that ${z+zb}$ belongs to: so $z+zb$ must represent a nonzero class in 
\[
Q(1)_1=\frac{\m\cap(0:\m^7)}{\m^2\cap(0:\m^7)+\m\cap(0:\m^6)}
  =\frac{(0:\m^7)}{\m^2\cap(0:\m^7)+(0:\m^6)}.
\]
Nonetheless, ${z+zb=(z-xy)+(xy+zb)}$, with ${z-xy\in(0:\m^6)}$, because $(z-xy)\circ f=-X^2Y^2Z+Z^4$, and ${xy+zb\in\m^2\cap(0:\m^7)}$. So ${z+zb}$ represents the zero class in $Q(1)_1$, contradicting the assumption that $B$ is a pre-Jordan basis for $A$.
\vskip 0.2cm\par\noindent
{\bf Some comments.}\par
This argument relied on the fact that there is an element that represents a non-zero class in a module $Q(a)_i$ (here ${1+b}$ in $Q(0)_0$) but when we multiply by $\ell=z$ we get an element that represents zero in all modules $Q(a)_i$ where it fits. We know that we can always find a pre-Jordan basis compatible with the Hilbert function -- a weaker condition -- and the corresponding modules do not have this behavior: any non-zero element of $A$ represents a non-zero class in the quotient $\m^i/\m^{i+1}$, where $i$ is its order.
\par
Now take $h=z-xy$, representing a non-zero element in $Q(2)_1$. Then for $\ell=z$ we have $\ell \cdot h$ is zero in all $Q(a)_i$ where it would fit: the smallest intersection of powers of the maximal ideal by Loewy ideals that it belongs to is $\m^2\cap(0:\m^5)$, making it a good candidate for $Q(2)_2$. However, we can write $zh$ as $(zh+x^2y^2)-(x^2y^2)$, with ${zh+x^2y^2\in\m^2\cap(0:\m^4)}$ and $x^2y^2\in\m^3\cap(0:\m^5)$, showing it represents the zero class in $Q(2)_2$. Nonetheless, $z^2h$ represents a non-zero class in $Q(3)_3$ going to a higher $a=3$, and $z^3h$ represents a non-zero class in $Q(2)_5$ going back to $a=2$, but jumping by two in order, as expected.\par
Note that a generic enough $\ell=x+y+z$ has Jordan type $(9,7,5,5,3,3,2,1,1)$ which is exactly that expected from concatenating the SLJT of the $Q(a)$ decomposition (Macaulay computation). 
However, $\ell=x+y+z$ is not a SL element for $A$ as  $H^\vee=(9,7,6,4,4,3,2,1)$.
\end{example}

\begin{definition}\label{orderdef} Given an element $f\in S$, by the \emph{order} $\mathfrak{o}(g)$ of $g=\varrho\circ f$ we mean the highest power of the maximum ideal ${\maxR}$ of $R$ such that ${g\in {\maxR}^{\mathfrak{o}(g)}\circ f}$, but is not in ${{\maxR}^{\mathfrak{o}(g)+1}\circ f}$. The \emph{degree} of $g$ is $k$ if $g\in S_{\le k}$ but not in $S_{<k}$.
\end{definition}
\par
\begin{remark}\label{exoticbrem}(Exotic summands of $f$)
Let $H$ be a Gorenstein sequence of socle degree $j$ and $\D=\bigl(H(0),\ldots,\, H(j-2)\bigr)$ a symmetric decomposition of $H$.  Let $n_i=n_i(\D)=\sum_{u=0}^i H(u)_1$. 
 Following \cite[Definition~3]{BJMR} we say that $f\in \mathcal S$ determining $A=R/\Ann f\in \Gor(\D)$ is in \emph{standard form} if after a linear change of variables in $\mathcal S$ to $X_1,\ldots, X_r$, we have
for each $i$, $f_{j-i}\in {\mathsf{k}}_{DP}[X_1,\ldots,X_{n_i}]$: otherwise $f$ is said to have \emph{exotic} terms. See Section \ref{exoticsec} below. 
In the previous Example \ref{2to3ex} where $f = X^4Y^4 + X^3Y^3Z + Z^5$ the term $X^3Y^3Z$ is an \emph{exotic summand} in this sense. We can see this because the variable $Z$ is a partial of $f$ of order $\mathfrak{o}(Z)=5$, as we can obtain it by doing ${-x^2y^2(z-xy)\circ f=Z}$; we have $n_0=n_1=2$, so $f_{8-0}$ and $f_{8-1}$ should be written in two variables $X,Y$, thus standard terms involving $Z$ have degree at most $6$, and any term involving $Z$ of higher degree is exotic (see \cite[Definitions 3 and 4]{BJMR} or \cite[Definition 2.3, \S 2.2]{IM}). Exotic summands provide examples of unexpected behavior, as happened here with the element $z$: we have that ${z\circ f=X^3Y^3+Z^4}$, so ${z\in(0:\m^7)}$, but the term $X^3Y^3$ can be cancelled using a partial of higher order, namely ${xy\circ f=X^3Y^3+X^2Y^2Z}$. This explains why ${z-xy\in(0:\m^6)}$, a crucial fact in showing that ${z+zb}$ does not represent any non-zero class in the modules $Q(a_i)$.
\end{remark}

An example of $(A,\ell), \ell$ generic,  where $P_{A,\ell}$ does not come from the $Q(a)$-decomposition is given below (Example \ref{AA*diffex}).  There, $A$ is not strong Lefschetz, although $Q(0)$, being of codimension two, is  strong Lefschetz (Lemma \ref{cod2lem}).  Quite commonly, however, a local AG algebra  $\A$ of codimension three having $Q_\A(0)$ of codimension two is strong Lefschetz, as in the next example.
\begin{example} 
Let $R={\kk}\{x,y,z\}$, take $f = X^{[4]}Y + Z^{[3]}$, and let $\A=R/\Ann f$. Then
$\Ann f=(y^2,\, z^2-x^4y,x^5)$, the Hilbert function is $H(\A)=(1,3,3,2,2,1)$, and its symmetric decomposition is
\begin{equation*}
\mathcal{D}=\bigl(H(0)=(1,2,2,2,2,1),\, H(2)=(0,1,1,0)\bigr).
\end{equation*}
It is straightforward to check that $\ell=x+y+z$ is a SL element for both the concatenation of the $Q(a)$ decomposition, and for $A$. \end{example}

\begin{definition}\label{compatibledef}
Let $\A$ be an AG algebra of socle degree $j$ and let $\mathfrak{B}$ be a basis of $\A$ as a vector space over $\kk$. We say that $\mathfrak{B}$ is \emph{compatible with the symmetric decomposition} if, for any integers $a,i\ge0$, $Q_\A(a)_i$ is generated by the set
\[
\bigl\{v\in \mathfrak{B}\mid v\in\bigl(\mathfrak{m}^i\cap(0:\mathfrak{m}^{j+1-a-i})\bigr)
  \setminus\bigl(\mathfrak{m}^{i+1}\cap(0:\mathfrak{m}^{j+1-a-i})
  +\mathfrak{m}^i\cap(0:\mathfrak{m}^{j-a-i})\bigr)\bigr\}
\]
\end{definition}

\begin{question}\label{Jordanbasisques}\begin{enumerate}[(i).]
\item Given a pre-Jordan basis for the multiplication by an element $\ell\in\m$ in an AG algebra $\A$, compatible with the $Q(a)$ decomposition of $A$, can we adjust it by subtracting from each $b_i$ a suitable element in ${\mathsf{k}}[\ell][b_1,\ldots,b_{i-1}]$ to give a Jordan basis compatible with the $Q(a)$ decomposition of $\A^\ast$? \par
\item Let $A$ be an AG algebra and let $\ell$ be any element in $\m_A$.
Then is there a pre-Jordan basis 
$\mathfrak{B}=\{v_1,\ldots,v_n\}$ 
for the multiplication map by $\ell$ in $A$ such that  
\[
\# \mathfrak{B}\cap \m_A^i\cap(0:\m_A^{j+1-a-i}) = 
\sum_{b\ge a,k\ge i} \dim Q(b)_k
\]
for each pair $a,i\ge0$?
\end{enumerate}
\end{question}\vskip 0.2cm\noindent
{\bf Answer}  NO! to each.
Example \ref{ChrisMex} is a counterexample to (i); and Example \ref{2to3ex} is a counterexample for (ii).
\begin{example}[Representative $h\in \A$ of an element in $Q(a)$]
\label{examplereph} 
Consider $F=X^4+X^2Y$, where $\A=R/I$ with $ I=\Ann F=(xy-x^3, y^2)$, and $H(\A)=(1,2,1,1,1)$, 
\[
\D(\A)=\bigl(H(0)=(1,1,1,1,1),\, H(2)=(0,1,0)\bigr).
\] 
Here $\A^\ast={\mathsf{k}}[x,y]/(xy,y^2,x^5)$ with filtration $\A^\ast=C(0)=C(1)\supset C(2)=y\A^\ast$.
Recall that $Q(2)_1$ is a quotient of $\mathfrak{m}\cap (0:\mathfrak{m}^{4+1-2-1})=\mathfrak{m}\cap (0:\mathfrak{m}^2)$.
We have that $h=y-x^2$ satisfies $xh\circ f= y^2h \circ F=0$, so $h\in \mathfrak{m}\cap (0:\mathfrak{m}^2)$,
and represents the element $y\in Q(2)\subset\A^\ast$.  Note, however, that $y\notin (0:\mathfrak{m}^2)$ as $x^2y\circ F\not=0$, so $y\in \A$ itself is not a representative of $Q(2)_1$, but, rather, represents an element of $Q(1)_1$. \end{example}\noindent
{\bf Moral:} One cannot pass to $Q_\A(a)$ in a natural way just from $\A^\ast$; that is, the sequence $C(0)\supset C(1)\supset \ldots \supset C(j-2)$ arises from the additional structure of Gorenstein duality on the algebra $\A $. Thus, in the Example \ref{examplereph} one cannot pass first to the initial form $y$ of $y-x^2$ to see its image in $Q(2)$. Recall also that there are graded algebras $\A^\ast$ with several distinct symmetric decompositions
\cite[\S 1.6]{IM}.
\subsection{Jordan types of $\A$ and of $\A^\ast$.}\label{JTA,A*sec}
Lemma \ref{dominancelem} shows that the Jordan type for $P_{A,\ell}$ is greater or equal that of $P_{A^\ast,\ell}$ in the dominance partial order.
The examples in this section show that $P_A$ may be strictly greater than $P_{A^\ast}$, and we explore the cause. 
Our first Example \ref{symHilbex} is one in which the Hilbert function is symmetric. Then $A$ AG implies that $Q_A(0)= A^\ast$ and is a graded AG algebra (Lemma \ref{Macaulalyduallem}). We introduce further invariants of $H$ and $\D$ (Definition \ref{def:relativeLef}), and recall a key inequality $P_c\bigl(H(M)\bigr) \geq P_{\ell,M} $(Lemma \ref{contiglem}). We also explain using the Jordan strings in Example \ref{cuteex} how $A$ and $A^\ast$ may have different Jordan types.

\subsubsection{Symmetric Hilbert function, $A, A^\ast$ of different Jordan type.}\par
We will now see that even when the AG algebra $A$ has symmetric Hilbert function $H(A): \dim_{\kk}A_i=\dim_{\kk}A_{j-i}$ for $0\le i\le j/2$,  in which case $A^\ast$ is also Gorenstein, we may have $P_{A,\ell}>P_ {A^\ast, \ell}.$  This example arose from discussion with Johanna Steinmeyer.
 \begin{example}\label{symHilbex} Let $R={\kk}\{x,y,z\}$, $S={\kk}_{DP}[X,Y,Z]$ let $F=X(Y^4+Z^4)+X^3+Y^2Z^3$, let $A_F=R/\Ann F$, for which $H(A_F)=(1,3,5,5,3,1)$; the defining ideal is 
 $$I=\Ann F=(x^2-z^4,xyz,xz^2-y^2z,xy^3-yz^3,y^4-z^4),$$ with $7$  generators. We have for Jordan type of $A$ with respect to $x$, $P_{x,A_F}=(4,2^6,1^2)$, The associated graded ideal is $$I^\ast=\Ann (X(Y^4+Z^4)+Y^2Z^3)=(x^2, y^2z-xz^2, xyz ,y^4-z^4, xy^3-yz^3),$$ with JT $P_{x,A^\ast}=(2^8,1^2)$ and $\mathcal S_{x,
 A^\ast}=(2\uparrow_0^4,2\uparrow_1^3,1_{2,3}).$  We have $P_{A_F,x}>P_{A^\ast,x}$ in the dominance partial order. Note that here both $A$ and $A^\ast$ are strong Lefschetz with respect to $\ell=x+y+z$ (calculation in Macaulay2).
  \end{example}\vskip 0.2cm\par\noindent

\subsubsection{Jordan type and filtrations.}\label{filtsec}
 Recall that the Hilbert function of a finite length graded module $M$ is the sequence $H(M)=(h_0,\ldots ,h_j)$ where $h_i=\dim_{\mathsf{k}}M_i$. Given a Hilbert function sequence $H$, recall that we denote by $H^\vee$ the conjugate partition (switch rows and columns in the Ferrers graph) of the partition rearrangement of $H$  (Definition \ref{JTdef}).\par
The Example \ref{AA*diffex} in the next section below shows that the Jordan type associated to $\mathcal{D}(A)$ can be strictly greater than that associated to $A^\ast$, which was a surprise to us (Equation \eqref{inequaleq}). The explanation is that for some $a\ge 0$, the $A^\ast $-module $Q(a)$ may satisfy $P_{Q(a),\ell}<H(a)^\vee  $, rather than equality, which would be the analogue for $Q(a)$ of strong-Lefschetz. To prepare, we first give some notation and state some results about filtrations. \vskip 0.2cm\par\noindent
When $M$ is standard-graded and $H(M)$ is non-unimodular, then $H(M)^\vee$ is simply unattainable as a Jordan type. Now we will introduce the concept of contiguous partition of $H$, from \cite[Definition~2.17]{IMM1}, as the best we might do (Lemma \ref{contiglem}).
\begin{definition}[Conjugate and contiguous partitions from $H$, relative Lefschetz property]\label{def:relativeLef}\par
i. The \emph{contiguous partition} $P_c(H)$ of the Hilbert function $H$ is the partition whose parts are the lengths of the maximal contiguous row segments of the bar graph of $H$. 
\par\noindent
ii. We say a linear form $\ell\in A_1$ in a graded Artinian algebra $A$ has the \emph{Lefschetz property relative to $H$} if its Jordan type is equal to the contiguous partition of $H$:
\[
P_\ell=P_c(H).
\]
iii. The \emph{concatenation} $P_1\bigcup P_2$ of two partitions $P_1$ of $a$, and $P_2$ of $b$, is the partition of $a+b$ obtained by concatenating (placing alongside) the parts of $P_1$ and of $P_2$.
\end{definition}
Thus, for a sequence  $H_1=(1,3,5,2,3)$ we have $P_c(H_1)=(5,4,2,1,1,1)$ but for $H_2=(1,3,5,3,2)$ we have $P_c(H_2)=(5,4,3,1,1)=H_2^\vee$.  For $P_1=(4,3,1)$, $P_2=(5,3,2)$ we have $P_1\bigcup P_2=(5,4,3,3,2,1)$.\par
Given a symmetric decomposition $\D=\bigl(H(0),\, H(1),\ldots \bigr)$ of $H$, we denote by $P_\D=\bigl(H(0)^\vee, H(1)^\vee,\ldots\bigr) $ the concatenation of the conjugate partitions to $(H(0),H(1),\ldots )$. We denote by $P_c(\D)$ the concatenation of the set of $\bigl\{P_c\bigl(H(a)\bigr)\bigr\}$ of contiguous partitions. 
\begin{align}\label{Dveeeq}
P_\D&=\bigl(H(0)^\vee \cup H(1)^\vee \cup\ldots \bigr)\\
P_c(\D)&=P_c\bigl(H(0)\bigr)\cup P_c\bigl(H(1)\bigr)\cup \cdots .
\end{align}
We will need the contiguous partitions $P_c(H)$, $P_c(\D)$ whenever we deal with non-unimodal Hilbert functions $H(A)$ or non-unimodal component Hilbert functions $H(a)$.\par
We will write $P_{Q(A),\ell}$ for the concatenation of $\{P_{Q_A(a),\ell}\}$, the actual Jordan types of the graded modules $Q(a)$ with respect to $\ell$. Since $Q_A(a)$ is a graded $A^\ast$ module, we have by Lemma \ref{contiglem} below 
\begin{equation}
P_{Q(a),\ell}\le P_c(H_A(a))\le P(H_A(a)).\end{equation}

We say that a filtration $\mathcal F: M=M(0)\supset M(1)\supset \cdots \supset  M(c)\supset M(c+1)=0$ of a module $M$ over a standard graded Artin algebra $A$ is \emph{compatible} with the $\m$-adic filtration, if $\m\cdot M(i)\subset M(i)$ for each $i\in [0,c]$.
\begin{lemma}\label{comparelem} Let $A$ be an Artinian standard graded algebra, and $M$ a module over $A$ with a stratification $\mathcal F: M=M(0)\supset M(1)\supset \cdots \supset  M(c)\supset M(c+1)=0$ that is compatible with the $\m$-adic filtration. Denote the successive quotient modules by $Q_{\mathcal F}(i)=M(i)/M(i+1)$ for $i\in [0,c]$. Let $\ell\in A_1$ be a linear form. Then we have (the right side is the concatenation),
\begin{equation}\label{filtereq}
P_{M,\ell}\ge \bigcup_i \{P_{Q(i),\ell}\}.
\end{equation}
Now let $A$ be an AG algebra, and  consider the $Q(a)$ subquotients of $A^\ast$ (Definition \ref{SymmDecompdef}). Then
\begin{equation}\label{AastDeq}
P_{A^\ast,\ell}\ge \bigcup_a \{P_{Q(a),\ell}\}. 
\end{equation}
\end{lemma}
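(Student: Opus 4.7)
My plan is to exploit the fact that each $M(i)$ is $\m$-stable, so multiplication by $\ell$ preserves every $M(i)$. I would pick a basis of $M$ obtained by concatenating lifts of bases of $Q_{\mathcal F}(c), Q_{\mathcal F}(c-1),\ldots,Q_{\mathcal F}(0)$ so that the matrix of $m_\ell : M \to M$ becomes block upper triangular with diagonal blocks equal to the induced multiplication maps on each $Q_{\mathcal F}(i)$. Any power $m_\ell^k$ then retains the block upper triangular shape, with diagonal blocks $(m_\ell |_{Q_{\mathcal F}(i)})^k$.

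The main linear-algebra step is the rank inequality
\begin{equation*}
\mathrm{rank}(m_\ell^k : M \to M) \;\ge\; \sum_i \mathrm{rank}\bigl(m_\ell^k|_{Q_{\mathcal F}(i)} \bigr) \qquad (k \ge 0),
\end{equation*}
which holds for any block triangular endomorphism. I would prove it in the two-block case $\bigl(\begin{smallmatrix}A & B\\0 & D\end{smallmatrix}\bigr)$ by a direct kernel count: an element $(u,v)$ of the kernel satisfies $Dv=0$ and $Au = -Bv$, yielding $\dim\ker T \le \dim\ker A + \dim\ker D$. The general case follows by induction on the number of blocks.

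Next I would translate ranks into conjugate partitions via the standard identity $\dim\ker(N^k) = \sum_{j=1}^k P_N^\vee(j)$ for a nilpotent endomorphism $N$ of Jordan partition $P_N$. Since $\dim M = \sum_i \dim Q_{\mathcal F}(i)$, the rank inequality converts to
\begin{equation*}
\sum_{j=1}^k P_{M,\ell}^\vee(j) \;\le\; \sum_{j=1}^k \Bigl(\bigcup_i P_{Q(i),\ell}\Bigr)^{\!\vee}(j) \qquad \text{for every } k,
\end{equation*}
that is, $P_{M,\ell}^\vee \le \bigl(\bigcup_i P_{Q(i),\ell}\bigr)^\vee$ in the dominance partial order on partitions of $\dim M$. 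Because conjugation reverses dominance on partitions of a fixed integer, this is precisely the inequality \eqref{filtereq}.

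For \eqref{AastDeq} I would apply \eqref{filtereq} with $M = A^\ast$ and $\mathcal F$ the symmetric-decomposition filtration $A^\ast = C_A(0)\supset C_A(1)\supset\cdots\supset C_A(j-1)=0$ of Lemma \ref{symmdecomplem}. Each $C_A(a)$ is an ideal of $A^\ast$, so the hypothesis $\m\cdot C_A(a)\subseteq C_A(a)$ is automatic, and the successive quotients are by definition the $Q_A(a)$. I do not expect a deep obstacle: the argument is a dictionary among ranks of nilpotent powers, partial sums of conjugate partitions, and the dominance order. The only delicate point is keeping track of the direction of dominance through conjugation so as not to accidentally reverse the final inequality.
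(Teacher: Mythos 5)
Your argument is correct, but it proceeds by a different route than the paper. You choose a basis adapted to the filtration, observe that $m_\ell$ is block triangular with diagonal blocks the induced maps on the $Q_{\mathcal F}(i)$, prove the rank inequality $\mathrm{rank}(m_\ell^k)\ge\sum_i\mathrm{rank}\bigl((m_\ell|_{Q(i)})^k\bigr)$ by a kernel count, and then translate via $\dim\ker(N^k)=\sum_{j\le k}(P_N^\vee)_j$, the identity $\bigl(\bigcup_i P_i\bigr)^\vee=\sum_i P_i^\vee$, and the order-reversal of conjugation on partitions of a fixed $n$ (legitimate here since $\dim M=\sum_i\dim Q(i)$). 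The paper instead reduces to the case $c=1$ and works directly with $\ell$-strings: it splits each string of a Jordan decomposition of $M$ at the point where it enters $M(1)$, notes that lifting Jordan bases of $Q(0)$ and $Q(1)$ yields a pre-Jordan basis of $M$ whose partition is the concatenation $P(0)\cup P(1)$, and invokes the principle that $P_{M,\ell}$ dominates the partition of any pre-Jordan basis. Your version is more elementary and uniform: it avoids any appeal to the maximality of the Jordan type over pre-Jordan bases (which the paper uses without proof), handles arbitrary $c$ by induction on blocks rather than iterating the two-step case, and rests only on semicontinuity-free linear algebra. What the paper's string-splitting buys in exchange is an explicit description of how strings of $M$ decompose into strings of the quotients, which is what supports the remark following its proof on when equality holds in \eqref{filtereq} (no string of $M(1)$ having a precursor in $M$); your rank argument does not directly produce that refinement, though it fully establishes the stated inequalities \eqref{filtereq} and \eqref{AastDeq}, the latter exactly as you say because each $C_A(a)$ is an ideal of $A^\ast$.
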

\begin{proof}[Proof of Lemma \ref{comparelem}] It suffices to show \eqref{filtereq}, and to show it for $c=1$.
Let $P=P_{M,\ell}$ have $t$ parts, and let  $M=\oplus_1^t L(i)$ be a decomposition of $M$ into $t$ $\ell$-strings (Definition \ref{JTdef}). For a string $L(i)=\bigl(m(i),\ldots ,m(i)^{p_i-1}\bigr)$ of length $p_i$ in $M$, let $n(i)=m(i)\ell^{s_i}$ be its first element in $M(1)$. Then its image in $Q(0)=M/M(1)$ is the string $m(i), \ell m(i),\ldots, \ell^{s_i-1}m(i)$ and its image in $Q(1)=M(1)$ is the string $\bigl(n(i),\ldots, n(i)^{p_i-s_i }\bigr)$. We will show 
\vskip 0.2cm\noindent
\textbf{Claim.} $Q(0)=\oplus L(i)/n(i)L(i)$ is a Jordan decomposition, and $Q(1)=\oplus n(i)L(i)$ is a Jordan decomposition of $Q(1)=M(1)$.\par\vskip 0.2cm\noindent
{\it Proof of Claim}: Suppose that $Q(0)$ has Jordan decomposition $P(0)$ and $Q(1)$ has Jordan decomposition $P(1)$ with respect to $\ell$. Then the concatenation gives a Jordan pre-basis for $M$ (we just do not have  $\ell^{p_i}m(i)=0$).  The Jordan type $P_M$ is the maximum of the partitions associated to Jordan pre-bases. so
$P_M\ge P(0)\cup P(1)$. \end{proof}\par
 It follows that there is equality in Equation \eqref{filtereq} in the case $c=1$ only if each $\ell$-string of $M(1)$ does not have a precursor $n'(i)$ in $M$ such that $\ell n'(i)=n(i)$.
\vskip 0.2cm
We recall \cite[Theorem 2.20]{IMM1},
\begin{lemma}\label{contiglem}
For a finite graded module $M$ over a standard-graded Artinian algebra $A$ with Hilbert function $H(M)$, we have for any linear form $\ell\in A_1$
\[
P_c\bigl(H(M)\bigr) \geq P_{\ell,M}.
\]
\end{lemma}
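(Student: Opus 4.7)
The plan is to compare the two partitions via the rank sequence of the multiplication operator $m_\ell$. Recall that for any nilpotent endomorphism $T$ with Jordan type $Q=(q_1,q_2,\ldots)$ one has $\mathrm{rank}(T^i)=\sum_k\max(q_k-i,0)$, so $P\le Q$ in the dominance order is equivalent to
\[
\sum_k\max(p_k-i,0)\ \le\ \sum_k\max(q_k-i,0)\qquad\text{for every }i\ge 0.
\]
Taking $P=P_{\ell,M}$ and $Q=P_c\bigl(H(M)\bigr)$, the lemma reduces to showing
\[
\mathrm{rank}(m_\ell^{\,i})\ \le\ \sum_{p\in P_c(H(M))}\max(p-i,\,0)\qquad\text{for every }i\ge 0.
\]

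First I would prove a graded rank bound. Since $\ell\in A_1$, the map $m_\ell$ is homogeneous of degree one, so $m_\ell^{\,i}$ restricts to a map $M_d\to M_{d+i}$ that factors as
\[
M_d\xrightarrow{\,m_\ell\,}M_{d+1}\xrightarrow{\,m_\ell\,}\cdots\xrightarrow{\,m_\ell\,}M_{d+i}.
\]
Its rank is therefore bounded by the minimum dimension along the chain, and writing $h_d=\dim_{\mathsf{k}}M_d$, summing over $d$ yields
\[
\mathrm{rank}(m_\ell^{\,i})\ \le\ \sum_{d\ge 0}\min\{h_d,h_{d+1},\ldots,h_{d+i}\}.
\]
Including the intermediate pieces $M_{d+1},\ldots,M_{d+i-1}$ in the minimum is essential: the coarser bound $\min(h_d,h_{d+i})$ would only recover the weaker estimate $P_{\ell,M}\le H(M)^\vee$ of Lemma~\ref{SLupperbdlem}, and is genuinely loose when the bar graph of $H$ is non-unimodal.

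Next I would give a combinatorial identification of the right-hand side. For fixed $d$,
\[
\min\{h_d,\ldots,h_{d+i}\}\ =\ \#\bigl\{k\ge 1:\,h_{d+j}\ge k\text{ for all }0\le j\le i\bigr\},
\]
counting the levels $k$ at which the $i+1$ consecutive columns $d,\ldots,d+i$ lie in a single maximal contiguous interval of the bar graph. Reorganising by level and by interval, each maximal interval $I$ at level $k$ contributes exactly $\max(|I|-i,0)$ valid starting positions for such a window. Since the parts of $P_c\bigl(H(M)\bigr)$ are by definition the lengths of the maximal contiguous intervals across all levels,
\[
\sum_{d\ge 0}\min\{h_d,\ldots,h_{d+i}\}\ =\ \sum_{p\in P_c(H(M))}\max(p-i,0),
\]
which is exactly the bound demanded by the first reduction.

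Chaining the two inequalities gives the required rank comparison for every $i$, hence the dominance inequality $P_{\ell,M}\le P_c\bigl(H(M)\bigr)$. The hard part is not any single computation — each is elementary — but recognising that the correct upper bound uses the minimum over the \emph{entire} window $\{h_d,\ldots,h_{d+i}\}$ rather than just its endpoints; this reflects the structural fact that an $\ell$-string running from degree $d$ to degree $d+p-1$ must cross every intermediate graded piece, so dips in the bar graph of $H$ cut long strings into shorter pieces precisely according to the contiguous partition $P_c(H)$.
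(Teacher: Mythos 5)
Your proof is correct. The reduction of the dominance inequality to the rank inequalities $\mathrm{rank}(m_\ell^{\,i})\le\sum_{p\in P_c(H(M))}\max(p-i,0)$ is legitimate because both $P_{\ell,M}$ and $P_c\bigl(H(M)\bigr)$ are partitions of $\dim_{\mathsf k}M$, so comparing the tail sums $\sum_k\max(\cdot-i,0)$ for all $i$ is equivalent to comparing conjugates, hence to dominance; the graded rank bound $\mathrm{rank}(m_\ell^{\,i})\le\sum_d\min\{h_d,\ldots,h_{d+i}\}$ is right since $m_\ell^{\,i}$ is the direct sum of its restrictions $M_d\to M_{d+i}$, each factoring through every intermediate graded piece; and the window-counting identity $\sum_d\min\{h_d,\ldots,h_{d+i}\}=\sum_{p\in P_c(H(M))}\max(p-i,0)$ is exactly the level-by-level count of starting positions inside each maximal contiguous segment. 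Note, though, that the paper does not prove this lemma at all: it is recalled verbatim from \cite[Theorem 2.20]{IMM1}, so there is no in-paper argument to compare against. Your write-up is a clean, self-contained proof of the cited result, and it correctly isolates the point that distinguishes $P_c(H)$ from $H^\vee$, namely taking the minimum over the whole window rather than over its endpoints.
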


Since our subquotients $Q_A(a)$ are standard graded, Lemma \ref{contiglem} implies that   $P_{Q(a),\ell}\le P_c\bigl(H(Q(a)\bigr)$. In the Example \ref{thirdAastex} 	we have $P_{A^\ast}>P_c(\mathcal{D})$.\vskip 0.2cm\par
 We give some further examples, here and in the next section, where $P_A>P_{A^\ast}$.
In the next example $Q(1)$ is non-simple, as $H(1)=(0,1,0,1,0)$. This is reflected in the Jordan type of $A^\ast$ but not in that of $A$, as $\{z,\ell z=x^3\}$ is a length-two pre-string (Definition \ref{preJordandef}) in $A$, but $\ell z=0$ in $A^\ast$. 
\begin{example}[Cute]\label{cuteex}
Let $R={\kk}\{x,y,z\}$, $S={\kk}_{DP}\{X,Y,Z\}$, consider ${F=X^{[3]}Y^{[2]}+Y^{[3]}Z}$, and let ${A=S/I}$, with ${I=\Ann F}$. Then $H(A)=(1,3,3,4,2,1)$ and its symmetric decomposition is
\[
\mathcal{D}=\bigl(H(0)=(1,2,3,3,2,1),\ H(1)= (0,1,0,1,0)\bigr),      
\]
while $H(A)^\vee=(6,4,3,1), $
We will show that for $\ell=x+y+z$
\begin{equation}\label{cuteeq} 
H(A)^\vee>P_{\A,\ell}=(6,4,2,2)>P_{A^\ast,\ell}=(6,4,2,1,1)=P_c(\D).
\end{equation}
The generators of $I$ are given by
\[I= (xz, yz-x^3, z^2, xy^3, y^4),\] 
and the associated graded ideal $I^\ast$ defining $A^\ast$ is 
\[I^\ast=(xz,yz,z^2, xy^3,x^4,y^4),\]
the annihilator of $\langle X^{[3]}Y^{[2]},Z,Y^3\rangle$.
A calculation in Macaulay2 shows that for $\ell=x+y+z$,  or a random linear combination that the Jordan type is $P_{ A,\ell}=(6,4,2,2)$, while  $P_{A^\ast,\ell }=(6,4,2,1,1)=P_c(\mathcal{D})$. Since ${H(A)^\vee=(6,4,3,1)}$ we see that $A$ is not SL (although $Q(0)$, being of codimension two, is strong Lefschetz). We now explain the difference between $P_{A}$ and $P_{A^\ast}$.\par
First, for $A^\ast$ taking $\ell=x+y+z$, its action on generators $1,x, y^2$  gives pre-strings of lengths $(6,4,2)$ (the length $6$ pre-string is a string, since $\ell^7=0$).
In particular $\ell^2 y^2=x^2y^2\in \langle \ell^4,\ell^3x\rangle$ in $A^\ast$, so there is a length-two pre-string $\langle y^2,y^2\ell\rangle$.  We may take $z\in A^\ast$ as the generator of another pre-string of $A^*$, for which $\ell\cdot z=0$ in $A^\ast$; and the last pre-string of $A^\ast$ is $\{y^3\}$, also of length one, as $\ell y^3=0$ in $A^\ast$. Thus, 
$P_{A^\ast,\ell }=(6,4,2,1,1)$.\par
What changes for $A$ is that $\ell z=xz+yz+z^2=x^3$, giving a length two pre-string, and $\ell y^2=y^3+xy^2$ is linearly independent of  $\langle \ell^3, \ell^2x, x^3\rangle$ so $\{y^2,\ell y^2\}$ remains a length-two pre-string, yielding $P_{ A,\ell}=(6,4,2,2)$.\par
What is cute here, is that one $\ell$ pre-string of $A$ starts representing an element, class of $z$ in $Q(1)_1$, but has next bead  $\ell z=x^3$ representing a class in $Q(0)_3$, a jumping behavior.  This illustrates the principle that if $z\in Q(a)$ and $\ell z$ represents an element of a lower $Q(b), b<a$, then $\ell z$ must have a jump in degree (Remark \ref{multrem}).
\end{example}
\subsection{Several irreducible components of $\Gor(H)$.\label{2compsec}
}\par
In this section we give two initial examples of codimension three Gorenstein sequences
$H$ such that $\Gor(H)$ has several irreducible components,  Example \ref{AA*diffex} where $H=(1,3,4,4,4,2,1)$, and Example \ref{thirdAastex}, where $H=(1,3,5,4,4,2,1)$.
In between, we discuss a potential further way to distinguish components of $\Gor(H)$, using the tangent space to the Hilbert scheme (Remark \ref{Isqrem}).

In the next example the AG algebra $B$ is in an irreducible component of $\Gor(\D_2)$ that is comprised of strong Lefschetz AG algebras, whose associated graded algebras are also strong Lefschetz. The algebra $A$ is in a different irreducible component of the same $\Gor(\D_2)$, comprised of non-strong Lefschetz algebras. In the symmetric decomposition $\Gor({\D_1})$  a general enough element $C$ has the same Jordan type as $A$; because of the specialization restrictions on symmetric decompositions, no family of algebras in $\Gor(\D_2)$ can specialize to any algebra in $\Gor(\D_1)$.  Because strong Lefschetz dominates non-strong Lefschetz, there are no specializations from a family in $\Gor(\D_1)$ to an element of $\Gor(\D_2)$ that is strong Lefschetz. This contrast assures that $\Gor(H)$, $H=(1,3,4,4,4,2,1)$ has at least two irreducible components. But we show that $\Gor(\D_2)$ itself has two irreducible components, neither in the closure of $\Gor(\D_1)$, so that $\Gor(H)$ has three components. \par
  Recall from Equation \eqref{Dveeeq} that, given a symmetric decomposition $\D$, we define $P(\D)=\bigcup_{a=0}^{j-2}H(a)^\vee$, and $P_c(\D)=\bigcup_{a=0}^{j-2} P_c(H(a))$; these can be quite different from the actual concatenation $P_{Q(A)}$ of $\{P_{Q(a)}, a=0,1,\ldots,j-2\}$ with respect to a generic linear form $\ell$. Here $P_{Q(A)}$ must be dominated by $P_{A^\ast}$ since $Q(a)$ are the quotients arising from a filtration of $A^\ast$.  We now illustrate this. We also have in $A,C$ examples of a non-strong-Lefschetz AG algebra such that $Q(0)$ is strong-Lefschetz - as the codimension of $Q(0)$ is two (Lemma \ref{cod2lem}).
\begin{example}[Jordan type of $A$ and of $A^\ast$ different, $\Gor(\D_2)$ has two irreducible components, $\Gor(H)$ has three]
\label{AA*diffex}\phantom{abscked} 
 Let $R={\mathsf{k}}[x,y,z]$, $S={\kk}_{DP}[X,Y,Z]$, fix $H=(1,3,4,4,4,2,1)$ of socle degree $6$.\vskip 0.2cm\par\noindent
\textit{The algebra $C$ and $\Gor(\D_1)$:} Let 
\[
\D_1=\bigl(H(0)=(1,2,3,4,3,2,1),\, H(1)=(0,1,0,0,1,0),\, H(2)=(0,0,1,0,0)\bigr).
\]
 Let $W=X^3Y^3+ZY^4+Z^4$, then $J=\Ann W=(xz,yz-x^3,z^3-y^4)$, $C=R/J\in \Gor(\D_1)$. Here $Q_C(0)={\mathsf{k}}[x,y]/(x^4,y^4)$ and  $Q^\vee(1)=\langle Z,Y^4 \rangle, Q^\vee(2)=\langle Z^2\rangle$. Also $J^\ast=(xz,yz,z^3,y^4,x^4y,x^5)$. We have $P(\D_1)=(7,5,3,2,1^2), P_c(\D_1)=(7,5,3,1^4), $ and\par $ H^\vee=(7,5,4,3)$.  Let $\ell=x+y+z$ we have \footnote{All calculations were in {\sc{Macaulay}}, and we found $\ell=x+y+z$ is general enough in this example.} $$H^\vee>P_{C,\ell}=(7,5,3,3,1) > P_{C^\ast,\ell}=(7,5,3,2,2)>P(\D_1)>P_c(\D_1)=P_{Q(A)}.$$  
\par\noindent
{\it The algebra $A$ and $\Gor(\D_2)$:} Let 
$$\D_2=(H(0)=(1,2,3,3,3,2,1),H(1)=(0,1,1,1,1)).$$
Let $F_A=X^2Y^4+X^3YZ$, then $I=\Ann F=( z^2,xz-y^3, y^2z, x^4, x^3y^2)$, $A=R/I\in  \Gor(\D_2)$. Here  $Q_A(0)={\mathsf{k}}[x,y]/(x^3,y^5), Q^\vee(1)=\langle Z,YZ,X^3,X^3Y  \rangle$, and $A^\ast=R/(z^2,xz,y^2z,x^4,y^5,x^3y^2)$.  We have $H^\vee=P(\D_2)=P_c(\D_2)=(7,5,4,3)$. We have for $\ell=x+y+z$ (also for random $\ell$),
\begin{equation}\label{inequaleq}
P(\D_2)=P_c(\D_2)=(7,5,4,3)>P_{A,\ell}=(7,5,3,3,1)>P_{A^\ast,\ell}=(7,5,3,2,2)=P_{Q_A,\ell}.
\end{equation}
Thus $A,A^\ast$, respectively have the same generic Jordan types as, respectively, $C,C^\ast$, and they are not strong Lefschetz. 
Explanation for $P(\D_2)>P_{A^\ast}=P_{Q(A)}$: note that $P_{Q(1),\ell}=(2,2)$, there being the two $\ell$-strings on $Q(1)$ of $\langle YZ,Z\rangle$ and $\langle X^3Y,X^3\rangle $. And the component Jordan type $P_{Q(a),\ell}$ may, as here, be smaller than $P_{\D_1}$. A choice of strings for  $(A,\ell)$ giving a pre-Jordan basis is (here $z\ell^3=0$ while $z\ell^2\in \m^4$)
 $$A=\langle (1,\ell,\ldots, \ell^6); (y,y\ell,y\ell^2,y\ell^3,y\ell^4);(z,z\ell,z\ell^2),(x^2,x^2\ell,x^2\ell^2),(x^3)\rangle.$$
We will later use that the Hilbert function $H(R/I_A^2)=(1, 3, 6, 10, 12, 12,  {\color{red}{12}}, 10, 7, 3)$, so $H(I_A/I_A^2)=(0,0,2,6,8,10, {\color{red}{11}},10,7,3)$.\par
\vskip 0.2cm\noindent
 {\it The algebra $B$ and $\Gor(\D_2)$:}
Let $G=X^2Y^4+Z^5$, and set $B=R/J, J=\Ann G=(xz,yz,x^3,y^5,x^2y^4-z^5)$ satisfies $B\in\Gor(\D_2)$ but now $Q(1)=\langle Z,Z^2,Z^3,Z^4\rangle$ is cyclic, and $B, B^\ast$ are both strong Lefschetz. That is, for $\ell=x+y+z$
$$ H^\vee=(5,4,3,3)=P_{B,\ell}=P_{B^\ast,\ell}.$$
We will need that the Hilbert function of $R/I_B^2$ is $(1, 3, 6, 10, 12, 12, {\color{red}{12}}, 8, 6, 4, 2)$, so $H(I_B/I_B^2)=(0,0,2,6,8,10, {\color{red}{11}},8,6,4,2)$.\par\vskip 0.2cm\noindent
{\it Dimension of $\Gor(\D_1)$}: Choose $\langle X,Y\rangle\subset S_1$ a $\mathbb P^2$ choice.  Let  $F$ be a dual generator of $C\in \Gor(\D_1)$. By Theorem \ref{ZTlem}, Equation \eqref{dimZteq} the dimension of the (not necessarily graded) quotients $Z_{H(0)}=n_0-4=12$. By \cite[Lemma 1.40]{IM}, $H(1)=(0,1,0,0,1)$ now implies that the $Z$ portion of $F_5$ is linear in $Z$. So it has the form $ZW, W\in {\kk}[X,Y]_4$ giving a dimension 5 fibering. We have the space $zR_1$ free to act on $F_4$, and
we must have $\dim (zR_1)\circ F_4\mod {\kk}[X,Y]_2=1$, this requires $F_4=L^4\mod {\kk}[X,Y]_4 $ (as we have already accounted for the $ {\kk}[X,Y]_4 $ portion of $F_4$, thus we choose $L=az+bx+cy$, a three-dimension choice. We may add arbitrary elements from $S_3,S_2$ not already in $R\circ (F_6+F_5+F_4)$ without changing $H$, so we may add $(r_2-4)+(r_3-4)=(6-4)+(10-4)$, obtaining a total dimension of $2+12+5+3+8=30$ for $\Gor(\D_1)$.\vskip 0.2cm\noindent
\vskip 0.2cm\noindent
{\it Dimension of $A$ component of $\Gor(\D_2)$.} Choosing $\langle X,Y\rangle$. Let $F$ be  a dual generator of an algebra with the Jordan type of $A$ in $\Gor(\D_2)$: then by Equation \eqref{dimZteq}, again the dimension of the not necessarily graded quotients $Z_{H(0)}=n_0-3=12$. As for $\D_1$ we are choosing a $ZW, W\in {\kk}[X,Y]_4 $, general enough, so this is a 5 dimensional choice (we care about the proportionality coefficient). The degree-4 component of $F$ \label{I2argument} in $Z$ must be annihilated by $(I_2)^2= \langle z^2, z\ell)^2\rangle$, a 3-dimensional space and is viewed mod $R\circ F_{\ge 5}$ of dimension $4$ so gives a $10-4-3=3$ dimensional space.  Finally we may add lower degree terms in $Z$: giving $(r_2-4)+(r_3-4)=8$; we  have for dimension of this component  $2+12+5+3+8=30$, again.\vskip 0.2cm \par\noindent
{\it Dimension of $B$ component of $\Gor(\D_2)$.} The choice of  $F_6$, as before is fibred over $\mathbb P^2$ by a 12-dimensional $Z_{H(0)}$. Now choose the $Z$-related portion of $F_5$ as $L^5$, a 3 dimensional choice. We may add on elements of 
$(I^2)^\perp$ to $F_4$, as we must have $I_2\circ F_4\subset (\Ann F)_2$. Here $(I^2)_4=(xz,yz)^2$ (since, after coordinate change $F_6+F_5$ is a connected sum), so, again, as for $A$, the $Z$-portion of $F_4$ is a
$10-4-3= 3$-dimensional choice. Finally $F_3+F_2$ adds 8 to the dimension, giving  $2+12+3+3+8=28$ dimension family.\par
We have now shown
\begin{claim}\label{Dtwocompclaim} 
\textit{$ \Gor(\D_2)$ itself has two irreducible components}, one corresponding to $F_5=ZG_4\mod {{\kk}}[X,Y]$ as for the algebra $A$;  the other component satisfies $F_5=L^5 \mod {{\kk}}[X,Y]$, as for the algebra $B$. The components have respective dimensions 30 for $A$ and 28 for $B$.
\end{claim}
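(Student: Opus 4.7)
The plan is to combine the explicit parametrizations given above with two semicontinuity arguments to separate the closures of the $A$-locus $\Sigma_A$ (those $F$ with $F_5\equiv ZG_4 \pmod{\mathsf{k}[X,Y]_5}$) and the $B$-locus $\Sigma_B$ (those $F$ with $F_5\equiv L^5 \pmod{\mathsf{k}[X,Y]_5}$) inside $\Gor(\D_2)$.

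I first verify that each of $\overline{\Sigma_A}$ and $\overline{\Sigma_B}$ is irreducible. In both cases the parameter space assembled above is an iterated bundle over the Grassmannian $\mathrm{Grass}(2,R_1)$ and the irreducible smooth variety $Z_{H(0)}$ of Lemma~\ref{ZTlem}, with affine-space fibres for the remaining Macaulay coefficients ($G_4$ or $L$, the $(I_2)^2$-annihilator portion of $F_4$, and the lower-degree terms $F_3+F_2$), hence irreducible. Under the Macaulay correspondence (Lemma~\ref{Macaulalyduallem}) the images $\Sigma_A,\Sigma_B\subset \Gor(\D_2)$ are irreducible constructible subsets of the stated dimensions $30$ and $28$.

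Next I separate the two closures by two semicontinuity arguments. By Lemma~\ref{dominancelem}, if $B\in \overline{\Sigma_A}$ then the generic Jordan type $P_A=(7,5,3,3,1)$ of $\Sigma_A$ would dominate $P_B=(7,5,4,3)$; but partial sums $(7,12,15,18,19)$ for $P_A$ against $(7,12,16,19,19)$ for $P_B$ show $P_A<P_B$ strictly in dominance, a contradiction, so $B\notin \overline{\Sigma_A}$. Conversely, $\dim\overline{\Sigma_B}=28<30=\dim\Sigma_A$ prevents $\Sigma_A\subset\overline{\Sigma_B}$. Thus $\overline{\Sigma_A}$ and $\overline{\Sigma_B}$ are incomparable irreducible closed subvarieties of $\Gor(\D_2)$, and therefore lie in two distinct irreducible components.

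To upgrade this to the claim that each closure \emph{is} an irreducible component, one argues maximality: if an irreducible component $C$ strictly contained $\overline{\Sigma_A}$, then $\Sigma_A$ would be a proper closed subset of $C$, so the generic Jordan type $P_{\mathrm{gen}}(C)$ would strictly dominate $P_A$; by Lemma~\ref{SLupperbdlem} it is bounded above by $H^\vee=P_B$, forcing $P_{\mathrm{gen}}(C)=P_B$, which makes the $P_B$-type locus dense in $C$ and hence $\dim C = 28 < 30 = \dim\Sigma_A$, contradicting $\Sigma_A\subset C$. An analogous argument handles $\overline{\Sigma_B}$. The main obstacle in this plan is this maximality step — specifically, pinning down that $\Sigma_B$ captures (up to a lower-dimensional subvariety) the entire $P_B$-locus of $\Gor(\D_2)$; this should follow from the codimension-two strong Lefschetz structure of $Q(0)$ (Lemma~\ref{cod2lem}) which forces $Q(1)$ to be cyclic precisely when $F_5$ has $L^5$-form, and from the connected-sum analysis of $F_6+F_5$ underlying the $(I_2)^2$-annihilator computation in the dimension count.
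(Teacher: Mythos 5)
Your core argument is exactly the paper's proof of this Claim: irreducibility of each locus from the bundle parametrization over $\mathbb{P}^2$ (choice of $\langle X,Y\rangle$) and $Z_{H(0)}$, dominance of Jordan type (Lemma \ref{dominancelem}) with $P_B=(7,5,4,3)>P_A=(7,5,3,3,1)$ blocking specialization from the $A$-locus to a general $B$-type algebra, and the dimension count $28<30$ blocking the reverse inclusion.

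The extra maximality step you add, however, contains an invalid inference: if an irreducible component $C$ of $\Gor(\D_2)$ strictly contained $\overline{\Sigma_A}$, semicontinuity gives only $P_{\mathrm{gen}}(C)\ge P_A$, not strict dominance --- a strictly larger irreducible family can have the same generic Jordan type --- so you cannot force $P_{\mathrm{gen}}(C)=P_B$ and then invoke the $28$-dimensional bound for the $B$-locus. What actually closes the argument (and what the paper takes for granted from its parametrization of dual generators) is exhaustiveness: every $F$ with symmetric decomposition $\D_2$ has its degree-five term, modulo $\kk[X,Y]_5$, of one of the two shapes $ZG_4$ or $L^5$, so that $\Gor(\D_2)=\overline{\Sigma_A}\cup\overline{\Sigma_B}$; mutual non-containment of the two irreducible closures then identifies them as the irreducible components, with no strict-dominance argument needed. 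You correctly flag this covering statement as the remaining obstacle, but the sketch you offer for it (via Lemma \ref{cod2lem} and the connected-sum analysis) is not carried out, and as written the flawed strict-dominance step is doing the work; replace the maximality portion by the exhaustiveness of the dichotomy on $F_5$ and the rest of your proof stands.
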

\begin{proof}[Proof of Claim] An algebra in the family related to $A$ cannot specialize to a general element related to $B$ by the closure properties of Jordan type (Lemma \ref{dominancelem}). But the $B$-component has smaller dimension, 
so a family in the $B$-related component cannot specialize to a general element in the $A$-component.
\end{proof}\par\noindent
\textit{Specialization: proof that $\Gor(H)$ has three components.}\par
Since $H_C(0)_3=4>H_B(0)=3$ we have by Corollary \ref{D0cor} that no subfamily of $\Gor(\D_2)$ can specialize to an algebra in $\Gor(\D_1)$; this and that $P_B=(7,5,4,3)$ dominates $P_C=(7,5,3,3,1)$ shows that we have at least two irreducible components of $\Gor(H), H=(1,3,4,4,4,2,1)$, corresponding to these two symmetric decompositions; but we have just shown that $\Gor(\D_2)$ itself has two irreducible components, neither in the closure of $\Gor(\D_1)$.
\end{example}
\subsubsection{Tangent space to the Hilbert scheme $\Hilb^n(\mathbb A^2)$ at $A$.}
We introduce this mainly as a theme to further explore elsewhere: we do not use this tangent space in an essential way in this paper.
\begin{remark}\label{Isqrem}
\textit{Negative deformations, tangent space.} The tangent space to the Hilbert scheme $\Hilb^n(\mathbb{A}^2)$ at a point corresponding to the length-$n$ Artinian algebra $A=R/I$ is $\Hom(I,A)$; for a graded AG algebra we would have $\Hom(I,A)\cong I/I^2$ with a reversal in degree \cite[Theorem 3.9, Remark 3.10]{IK}; however we are not sure how this might extend to the non-graded case.\par 
Calculation in Macaulay for the algebras $A$, $B$, $C$ above, all of Hilbert function $H(A)=(1,3,4,4,4,2,1)$, shows, remarkably, an equality in the lengths
$|H(R/I_A^{\,2})|=|H(R/I_B^2{\,2})|=|H(R/I_C^{\,2})|$, and in fact, we have 
$|H(I/I^2)=3\cdot |H(R/I)|$ for each of the ideals $I=I_A,I_B,I_C$.\footnote{A result of J.O. Kleppe shows that as a consequence of the D. Buchsbaum-D.Eisenbud Pfaffian structure theorem applied to graded AG algebras of codimension three, the Hilbert function $H(R/I^2)$ depends only on $H(R/I)$,  (\cite[Proposition 2.5]{Kl1}, \cite[Proposition 4.25]{IK}). This Hilbert function result does not generalize to nongraded AG algebras of codimension three; however in each example we have checked the length $|H(I/I^2)|=3|H(R/I)|$ (See Note \ref{discussnote} p. \pageref{discussnote}).}\par
\end{remark}
That the dimension $\dim_{\kk}(I/I^2)=3\dim(R/I)$, is a consequence of the smoothability of all codimension three Artinian Gorenstein algebras.
We note:\vskip 0.2cm
\par\noindent
{\bf Principle.}
For a constant length family $ \Xi(t), t\in T$, of Artinian algebras $\Xi(t)=R/I(t)$, the sum 
$\sum_{i\ge i_0}H(\Xi(t))_i$ for a fixed $i_0$ is an upper semicontinuous function. Higher values are more special: one cannot deform from a family with higher sum to one of lower sum. The reason is, assuming that the socle degrees of $\Xi(t)$ are $j$ or below, so working in the finite dimensional quotient ring $R^\prime= R/\m^{j+1}$ that for $\Xi(t)=R^\prime/I(t)$ the dimension of the intersection  $I(t)\cap \m^{i_0}\subset R^\prime$ is semicontinuous.
\par
 This implies
\begin{lemma}\label{tangentlem} 
Consider two AG algebras $A$, $B$ of codimension three, and assume $H(A)=H(B)$. Let $\AAA=\{A(t), t\in T\}$,
 be a family of algebras satisfying $H\bigl(A(t)\bigr)=H(A)$ and $H\bigl(I_{A(t)}/I_{A(t)}^{\,2}\bigr)=H(I_A/I_A^{\,2})$. Assume that there is an integer $i_0$ such that
\begin{equation}
\sum_{i\ge i_0}\dim_{\kk}\bigl((I_{A(t)})_i/(I_{A(t)})_i^{\,2}\bigr)>\sum_{i\ge i_0}\dim_{\kk}\bigl((I_B)_i/(I_B)_i^{\,2}\bigr).
\end{equation}
Then there can be no specialization from the family $\AAA$ to $B$. 
\end{lemma}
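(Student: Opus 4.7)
The plan is to apply the upper-semicontinuity Principle stated above to the auxiliary family of Artinian algebras $\Xi(s):=R/I(s)^{\,2}$, where $\{A(s)=R/I(s)\}$ is a hypothetical one-parameter degeneration with $A(s)\in\AAA$ for generic $s$ and $A(s_0)=B$. The idea is that upper-semicontinuity forces $\sum_{i\ge i_0}H(R/I_B^{\,2})_i$ to be at least the generic value along $\AAA$, directly contradicting the strict hypothesis once it is translated into a statement about $H(R/I^{\,2})$.

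First I would verify constancy of length in the squared-ideal family. Invoking the smoothability of codimension-three Artinian Gorenstein algebras recalled just before the statement, $|R/I^{\,2}|=|R/I|+|I/I^{\,2}|=4|R/I|$ for every codimension-three AG quotient $R/I$. The assumption $H(A(s))=H(A)=H(B)$ forces $|A(s)|=|B|=|A|$ throughout the degeneration, so
\[
|R/I_{A(s)}^{\,2}| \;=\; 4|A| \;=\; |R/I_B^{\,2}|
\]
is the same constant at every point of the one-parameter family, including the special fibre $B$. By Lemma \ref{flatlem}, this constant-length family of Artinian schemes is therefore flat over its reduced one-dimensional base.

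Next I would apply the Principle to this flat family: for each fixed $i_0$, the sum $\sum_{i\ge i_0}H(R/I(s)^{\,2})_i$ is upper-semicontinuous in $s$, so its value at the special fibre dominates its generic value,
\[
\sum_{i\ge i_0}H(R/I_B^{\,2})_i \;\ge\; \sum_{i\ge i_0}H(R/I_{A(t)}^{\,2})_i.
\]
The short exact sequence $0\to I/I^{\,2}\to R/I^{\,2}\to R/I\to 0$ gives $H(R/I^{\,2})=H(R/I)+H(I/I^{\,2})$, and the common constant sequence $H(R/I_{A(t)})=H(R/I_B)=H(A)$ has the same partial sum $\sum_{i\ge i_0}H(A)_i$ on both sides; subtracting it yields
\[
\sum_{i\ge i_0}H(I_B/I_B^{\,2})_i \;\ge\; \sum_{i\ge i_0}H(I_{A(t)}/I_{A(t)}^{\,2})_i,
\]
which contradicts the strict inequality in the hypothesis. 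Hence no family in $\AAA$ can specialize to $B$.

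The step I expect to be the main obstacle is the flatness assertion for the squared-ideal family across the specialization point $B$. A priori, even when the degeneration of ideals $I_{A(s)}\rightsquigarrow I_B$ is flat, one only has the inclusion $\lim_{s\to s_0}I_{A(s)}^{\,2}\subseteq I_B^{\,2}$, and in general this inclusion can be strict, in which case the Principle would pertain to the flat limit of the squares rather than to $I_B^{\,2}$ itself and the final contradiction would evaporate. It is precisely the codimension-three smoothability identity $|R/I^{\,2}|=4|R/I|$, applied simultaneously to every $A(s)$ and to $B$, that matches the lengths on the two sides of the inclusion and thereby upgrades it to an equality, placing the Principle exactly where it needs to be.
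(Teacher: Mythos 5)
Your argument is essentially the paper's proof: the paper likewise applies its semicontinuity Principle to the constant-length family $R/I(t)^{\,2}$, using $\dim_{\kk}(I/I^{\,2})=3\dim_{\kk}(R/I)$ for codimension-three AG algebras and $H(R/I^{\,2})=H(R/I)+H(I/I^{\,2})$ to convert the hypothesis into a statement about partial sums of $H(R/I^{\,2})$, exactly as you do. One small slip in your closing discussion: the a priori containment goes the other way, namely $I_B^{\,2}\subseteq\lim_{s\to s_0}I_{A(s)}^{\,2}$ (products of limits are limits of products, so the colength of the square can only jump up at the special fibre), but since the smoothability identity equates the lengths on both sides, an inclusion in either direction is forced to be an equality, and your conclusion stands.
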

\begin{proof} We have 
\begin{equation}
H(R/I_A^{\,2})=H(I_A/I_A^{\,2})+H(R/I_A)=H(I_A/I_A^{\,2})+H(A),
\end{equation}
so, given $H(A)=H(B)$ and the fact $\dim_{\kk}(I_A/I_A^{\,2})=3\dim(R/I_A)$ for height three local AG algebras, we conclude the result from the Principle above.
\end{proof}\par 
For, example, in codimension two or less, the curvilinear length $n$ local algebras of Hilbert function $H=(1,1,\ldots, 1_{n-1})$ specialize to all other local algebras of length $n$, but no family of non-curvilinears can specialize to a curvilinear algebra.\footnote{Reason: each colength $n$ ideal $I$ of $R$ contains $\m^n$, so we may work with $I/\m^n\subset R/\m^n$. The codimension of $I\cap \m^{i_0}/\m^n$ in $\m^{i_0}/\m^n$ for  a fixed $i_0\le n$ is $\sum_{i\ge i_0}H(R/I)$. So a larger sum greater than $c$ implies a smaller intersection of $I$ with a fixed ideal $\m^{i_0}$, which is an open condition on ideals $I/\m^n$ of fixed length in $R/\m^n$.}

We have from calculation in \textsc{Macaulay2} for the algebras $A$, $B$, $C$ above in Example \ref{AA*diffex}
\vskip 0.3cm\noindent
$\begin{array}{c||c|c|}
R/I&H(R/I^2)&H(I/I^2)\\\hline
A&(1, 3, 6, 10, 12, 12,  {\color{red}{12}}, 10, 7, 3)&(0,0,2,6,8,10,{\color{red}{11}},10,7,3)\\\hline
B&(1, 3, 6, 10, 12, 12, {\color{red}{12}}, 8, 6, 4, 2)&(0,0,2,6,8,10,{\color{red}{11}},8,6,4,2)\\\hline
C&(1, 3, 6, 10, 12, 12, {\color{red}{11}}, 9, 7, 4, 1)&(0,0,2,6,8,10,{\color{red}{10}},9,7,4,1)\\\hline
\end{array}$
\vskip 0.2cm
Here, we may compare $H(R/I_A^{\,2}) $ and $H(R/I_B^{\,2})$ as within $\Gor(H)$ the lengths of $H(A)$ and of $H(R/I_A^{\,2})$ are constant. Thus, it follows from the table above that there are no specializations from a family like $B$ to $A$ ($i_0=8$) nor $C$ to $B$ ($i_0=7$) nor  $B$ to $C$ ($i_0=10$).  Warning: we have not checked the $H(R/I^{\,2})$ for generic algebras of these irreducible components of $\Gor(H)$. 
 
\subsubsection{When $H(A)^\vee>P_A>P_{A^\ast}>P_c(\D)$, and a second $\Gor(H)$ with two irreducible components.}

The following example was discussed in \cite{IM} with reference to the decomposition $\D_1$, but now we consider the Jordan type. We will see that $A$ is not strong Lefschetz, and that for a generic linear form $\ell$ (see Definition \ref{def:relativeLef}), we have
 $H(A)^\vee>P_{A,\ell}>P_{A^\ast,\ell}> P_c(\mathcal{D}_1).$ 
We also show that the family $\Gor(H)$ has two irreducible components,
corresponding to the two symmetric decompositions $\D_1$ and $\D_2$ of $H$. This is  the main theme we will explore in Section 3.
\begin{example}\label{thirdAastex} \par
(i). We recall from  \cite[Example 1.31]{IM}, let $R={\kk}\{x,y,z\}, S={\kk}_{DP}[X,Y,Z]$ and $F=X^{[3]}\cdot Y^{[3]}+Z(X^{[4]}+Y^{[4]})\in S$. Then $A=R/\Ann F$ has Hilbert function $H_F=(1,3,5,4,4,2,1)$ and symmetric  decomposition
\begin{equation}\label{geneq1}
\D_1=\big(H_F(0)=(1,2,3,4,3,2,1),\ H_F(1)= (0,1,0,0,1,0),\ H_F(2)=(0,0,2,0,0)\big).     
\end{equation}
We will show that, while $H(A)^\vee=P_c(H(A))=(7,5,4,3,1)$, for a generic $\ell$, 
\begin{equation}\label{alldifferenteq}
P_{A,\ell}=(7,5,3,3,1,1)> P_{A^\ast,\ell}=(7,5,3,2,2,1)=\mathcal{D}^\vee >P_c(\D_1)=(7,5,3,2,1,1,1).
\end{equation}
 The ideal $I=\Ann F = (z^2, xyz, x^2z-xy^3, y^2z-x^3y, x^4-y^4)$; the associated graded ideal $I^\ast$ defining $A^\ast$ is $I^\ast=(z^2, xyz, x^2z, y^2z, x^4-y^4,xy^4,yx^4).$

Here
$
Q_F^\vee(1)=Q_F(1)\circ F=\langle x^4\circ f,z\circ f\rangle=\langle
Z,X^{[4]}+Y^{[4]}\rangle,\quad
Q_F^\vee(2)=Q_F(2)\circ F= \langle (-zx+y^3)\circ f, (-zy+x^3)\circ
F\rangle=\langle XZ,YZ\rangle .$
\par 
Note, $H(A)^\vee=(7,5,4,3,1)$, $\mathcal{D}_1^\vee=(7,5,3,2,2,1)$ while $P_c(\D_1)=(7,5,3,2,1,1,1)$. Also, for any $\ell, P_{Q_F(1),\ell}=(1,1)=P_c(H(1)$ so each $P_{Q_F(a),\ell}$ achieves its maximum $P_c(H(a))$ under Lemma \ref{contiglem}. Nevertheless, as we shall see, $H(A^\ast)>P_c(\mathcal{D_1})$.\par
The structure of $Q_F(1)$ suggests that the Jordan type of $A^\ast$ might be quite special compared to that of $A$. This is confirmed by the remark, that for any non-zero linear form $\ell=ax+by+cz$, the homomorphism
$m_{\ell^2}: A^\ast_1 \to A^\ast_3$ has a kernel, as $z\ell^2=0 \mod I^\ast$. We may use as $\mathsf{k}$-basis
\[
A=\langle 1;x,y,z;x^2,xy,y^2,zx,zy;x^3,x^2y,xy^2,y^3;x^4,x^3y,xy^3,y^4;x^3y^2,x^2y^3;x^3,y^3\rangle.
\] 
For $\ell=ax+by+cz$ general enough both $A,A^\ast$, have linearly independent strings $L_1=\{1,\ell,\ldots, \ell^6\}$, $L_2=\{x,x\ell,\ldots, x\ell^4\}$. A pre-Jordan basis for $A$ is
pre-Jordan basis  $$L_1,L_2,L_3=\{z,z\ell,z\ell^2\},L_4=\{y^2,y^2\ell,y^2\ell^2\}, L_5=\{x^2\},L_6=\{x^4\}=\{y^4\}.$$
To establish this, note that $z\ell^3\equiv z(x^3+y^3)=x^3y^2+x^2y^3\}$ is in $\langle L_1\oplus L_2\rangle$ as $H(A)_5=~2$. Here $x^2\ell$ does not kick off a class $\langle x^4\rangle$, we know that $A_4$ is already full. This shows $P_{A,\ell}=(7,5,3,3,1,1)$.
For $A^\ast$ we have strings $L_1,L_2,L^\prime_3=\{z,z\ell\}$ (as $z\ell^2\in I^\ast$). Also $L^\prime_4=\{y^2,y^2\ell,y^2\ell^2\} $ (here $A_5$ is already in the span of $L_1,L_2$). We have 
$L^\prime_5=\{zy,zy\ell\},L^\prime_6=\{xy,xy\ell\},L^\prime_7=x^2$, giving $P_{\ell,A^\ast}=(7,5,3,2,2,1)$.\par
Calculation in Macaulay 2 confirms that $P_{A,\ell}=(7,5,3,3,1,1)$ for $\ell$ random or $\ell=(x+y+z)$;
$P_{A^\ast,\ell}=(7,5,3,2,2,1)=\mathcal{D}_1^\vee >P_c(\mathcal{D}_1) $ for $\ell $ random and $P_{A^\ast,\ell}=(7,5,3,2,1,1,1) =P_c(\mathcal{D}_1)$ for  $\ell=(x+y+z)$. \vskip 0.2cm \par
(ii). The algebra $B=R/\Ann G, G=X^{[2]}\cdot Y^{[4]}+Z^{[5]}+(Y+Z)^{[3]})\in S$ has the same Hilbert function $H=(1,3,,5,4,4,2,1)$ and decomposition 
$$\D_2=\D_B=\left(H(0)=(1,2,3,3,3,2,1),H(1)=(0,1,1,1,1), H(2)=(0,0,1)\right).$$
Since $\D_2< \D_1$ from $A$ in the order of Lemma \ref{trianglelem} there can be no specialization from a subfamily of $\Gor(\D_2)$ to an element of $\Gor(\D_1).$  But it is easy to check that $P_B=(7,5,4,3,1)$, strictly greater in the dominance partial order than $P_A=(7,5,3,3,1,1)$ (Lemma \ref{dominancelem}) so there is no specialization from a family in $\Gor(\D_1)$ to a general element of $\Gor(\D_2)$. It is not hard to show that each family is irreducible. So $\Gor(H)$ has two irreducible components, each corresponding to a symmetric decomposition of $H$.
\end{example}
\par\noindent
Examples \ref{symHilbex},~\ref{cuteex},~\ref{AA*diffex} and \ref{thirdAastex} are  of algebras with $P_A> P_{A^\ast}$.
Example~\ref{AA*diffex} is of a non-SL algebra $A$ such that $Q_A(0)$ is SL.
Johanna Steinmeyer asked the following in our MFO (Oberwolfach) discussion of October 2020.
\begin{question}\label{Steinques}(a) Can we find a strong Lefschetz AG algebra $A$ such that $A^\ast $ is not SL? (b) Can we show that $A$ SL implies $Q_A(0)$ is SL?
\end{question}
Both questions are open. 

\subsection{Exotic summands.}\label{exoticsec}\label{exoticsec}
When parametrizing families of AG algebras with given Hilbert function $H$, we can use both the dual generator $f$ and the ideal $\Ann f$. Since we can compute the Hilbert function and its symmetric decomposition $\D$ from the vector space $R\circ f$, of partials of $f$, it is crucial to understand how changes in the summands of $f$ in each degree may affect $H$ and $\D$. In a preprint that led to \cite{BJMR}, A. Bernardi and K. Ranestad noticed that some summands had variables that were unexpected in high degrees, given the decomposition $\D$. They termed them \emph{exotic summands}.  
For convenience we repeat the definition, given in Remark \ref{exoticbrem}.
\begin{definition}[Exotic summand of $f$]\label{exoticdef}(See \cite[Definition~3 and 4]{BJMR}, \cite[Definition 2.3, \S 2.2]{IM}). 
 Let $H$ be a Gorenstein sequence of socle degree $j$ and $\D=\bigl(H(0),\, H(1),\ldots,\, H(j-2)\bigr)$ a symmetric decomposition of $H$.  Let $n_i=n_i(\D)=\sum_{u=0}^i H(u)_1$. 
We say that $f\in \mathcal S$ determining $A=R/\Ann f\in \Gor(\D)$ admits a \emph{standard form} if, after a change of variables in $\mathcal S$ to $X_1,\ldots,X_r$,
for each $i$, $f_{j-i}\in {\mathsf{k}}_{DP}[X_1,\ldots,X_{n_i}]$. Otherwise $f$ is said to have \emph{exotic} terms.\footnote{For $f=(X-Y)^7+Y^5$, a change of variables $X_1=X-Y$ leads to $f=X_1^7+Y^5$, which is in standard form. When there is an exotic summand, no linear change of variables leads to $f$ in standard form.}
\end{definition}
For examples, see Remark \ref{exoticbrem} earlier and Example \ref{ex:exoticsummand} below.
 It is known that up to isomorphism of local algebras we may choose the dual generator to be in standard form: more precisely, given the AG algebra  $A$ we may determine $B=R/J$, $J=\Ann G$ such that $B\cong A$ and $G$ is in standard form. This is shown in \cite[Theorem~5.3]{I2}, but with more detail in \cite[Theorem~2.7]{IM}).\footnote{This standard form is not obtained by simply omitting the exotic summands, which would not in general preserve the isomorphism class.}

\begin{example}
\label{ex:exoticsummand}
Let $R={\kk}\{x,y,z\}$, $S=\kk_{DP}[X,Y,Z]$, and consider ${A=R/\Ann f}$, with ${f=X^3Y^3+YZ^3}$ in $S$. We can see that $A$ has Hilbert function ${H=(1,3,5,5,3,2,1)}$, with symmetric decomposition
\[
\D=\bigl(H(0)=(1,2,3,4,3,2,1),\, H(2)=(0,1,2,1,0)\bigr).
\]
If we take a close look at $R\circ f$, we see that the action of monomials $x^py^q$, with ${0\le p,q\le 3}$ on $f$ explains $H(0)$, while the partials 
\begin{align*}
z\circ f&=YZ^2 & yz\circ f&=Z^2 & yz^2\circ f&=Z\\
& & z^2\circ f&=YZ
\end{align*}
account for $H(2)$. Note that $x\circ f$ and $y\circ f$ are partials of degree $5$, while $z\circ f$ is a partial of degree $3$, which helps to explain why $z\circ f$ and its subsequent partials contribute to $H(2)$. This is not surprising, since $Z$ only occurs in $f$ in a summand of degree $4$. We say that $Z$ is a partial of order $3$, in the sense that it is obtained by the action of $yz^2$, an element of order $3$, and cannot be obtained by the action of an element of higher order. (For instance, even if ${z^3\circ f}=Y$, we know that $Y$ is a partial of order $5$, because ${x^3y^2\circ f=Y}$.) 

Now it would be tempting to think that any dual generator $g$ defining an AG algebra ${B=R/\Ann g}$ with the same Hilbert function $H$ and decomposition $\D$ would have summands in degrees $6$ and $5$ that could be written with only two variables (up to a linear change of variables, if necessary). This is true for degree $6$, but not for degree $5$. For example, if we take ${g=X^3Y^3+X^2Y^2Z}$ then $B$ has Hilbert function $H$ with decomposition $\D$. Although $Z$ occurs in degree $5$, it is a partial of order $3$, since ${(-xyz+x^2y^2)\circ g=Z}$, and we cannot obtain it with the action of an element of higher order. The reason for this behaviour is that we can take ${\varphi=xy}$ and write ${g=g_{\mathrm{st}}+g_{\mathrm{ex}}}$, with
\begin{align*}
g_{\mathrm{st}} & = X^3Y^3-XYZ^2-Z^3\\
g_{\mathrm{ex}} & = Z(\varphi\circ X^3Y^3) + Z^2(\varphi^2\circ X^3Y^3) 
  + Z^3(\varphi^3\circ X^3Y^3)
\end{align*}
Note that ${(z-xy)\circ g_{\mathrm{ex}}=0}$. We say that $f$ and $g_{\mathrm{st}}$ are in the standard form, and we call $X^2Y^2Z$ an exotic summand of $g$ (see \cite[Definitions 3 and 4]{BJMR} or \cite[Definition 2.3, \S 2.2]{IM}). 
\end{example}

The following result is a consequence of \cite[Proposition 6]{BJMR}, which implies that an element $f$ defining an algebra $A=R/\Ann f\in\Gor(\D)$ can be written $f=f_{st}+f_{ex}$ where $f_{st}$ is standard, and $f_{ex}$ can be expressed using $f_{st}$. For any ${a\ge0}$, let ${n_a=\sum_{i\le a}H(a)_1}$.\vskip 0.2cm

\begin{lemma}[Exotic count, two to three variables]
\label{exoticlem}
Let $H$ be a Gorenstein sequence with $H_1=3$; let $a$ be an integer at 
least $2$, and let $\D$ be a symmetric decomposition of $H$ satisfying 
$n_{a-1}=2$ and $n_a=3$. 
Let $R/\Ann f\in \Gor(\D)$. Then we can choose a basis $X,Y,Z$ for 
$S_1$ such that $X$ and $Y$ are partials of $f$ of order at least $j-a$ 
and $Z$ is a partial of order $j-a-1$ -- in other words, 
${X,Y\in\m^{j-a}\circ f}$ and ${Z\in(\m^{j-a-1}\circ 
f)\setminus(\m^{j-a}\circ f)}$. Furthermore, we may write 
$f=f_\mathrm{st}+f_\mathrm{ex}$, where $f_\mathrm{st}$ is standard 
(i.e., for any degree ${d>j-a}$, 
${(f_\mathrm{st})_d\in\kk_\mathrm{DP}[X,Y]}$) and
\begin{equation}\label{exotic1eq}
f_\mathrm{ex}=Z\varphi(f_\mathrm{st})+Z^{[2]}\varphi^2(f_\mathrm{st})+Z^{[3]}\varphi^3(f_\mathrm{st})+\cdots
\end{equation}
where $\varphi\in {\mathsf{k}}\{x,y\}_{\ge2}$ and the degree of 
$\varphi(f_\mathrm{st})$ is at least ${j-a}$.\par
In particular, if we fix $f_\mathrm{st}$, and let 
$A(f_\mathrm{st})\subset \Gor(\D)$ be
\[
A(f_\mathrm{st})=\{A=R/\Ann f\mid f=f_0+f_{ex}\},
\]
then $A(f_\mathrm{st})$ is an affine space -- parametrizing 
$\varphi(f_\mathrm{st})$ in Equation \eqref{exotic1eq}; it has dimension
\begin{equation}\label{exotic2eq}
\dim A(f_\mathrm{st})=\sum_{b=0}^{a-1}\sum_{u=j-a}^{j-b-2} H(b)_u.
\end{equation}
\end{lemma}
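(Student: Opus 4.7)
The plan is to deduce the lemma from the structural result \cite[Proposition~6]{BJMR} and carry out a direct parameter count. First I would fix a basis $X,Y,Z$ of $S_1$ as stipulated: the assumption $n_{a-1}=2$ means that the subspace of $S_1$ consisting of linear forms arising as partials of $f$ of order at least $j-a$ is two-dimensional, and I would take this subspace as $\langle X,Y\rangle$; the assumption $n_a=3$ then provides an additional form $Z$ which is a partial of order exactly $j-a-1$, representing the new variable contributed by $Q(a)_1$. With this choice, the cited proposition yields the decomposition $f=f_\mathrm{st}+f_\mathrm{ex}$ with $f_\mathrm{ex}=\sum_{k\ge 1}Z^{[k]}\varphi^k(f_\mathrm{st})$ for some $\varphi\in\kk\{x,y\}_{\ge 2}$.

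Next, fix $f_\mathrm{st}$ and consider $A(f_\mathrm{st})$ as parametrized by $\varphi$. The crucial observation is that $f_\mathrm{ex}$ depends on $\varphi$ only through its class modulo $\Ann f_\mathrm{st}\cap\kk\{x,y\}$: if $\psi$ lies in this annihilator, then $\psi^k$ and every cross term $\varphi^{k-i}\psi^i$ with $i\ge 1$ lie in $\Ann f_\mathrm{st}$, so $(\varphi+\psi)^k\circ f_\mathrm{st}=\varphi^k\circ f_\mathrm{st}$ for every $k\ge 1$. By Macaulay duality, classes of $\varphi$ modulo this annihilator are in bijection with the possible leading contractions $\varphi\circ f_\mathrm{st}\in R\circ f_\mathrm{st}$, and all higher iterates $\varphi^k\circ f_\mathrm{st}$ are recovered from this datum by iterated contraction inside the $R$-module $R\circ f_\mathrm{st}$.

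Finally, I would identify the free parameters in $\varphi\circ f_\mathrm{st}$ and match with \eqref{exotic2eq}. A graded piece $(\varphi\circ f_\mathrm{st})_u$ actually modifies $f$ within the exotic range of degrees $[j-a+1,j]$ exactly when $u\in[j-a,j-1]$; the condition $\varphi\in\m^{\ge 2}$ then forces $u\le j-2$. Since $f_\mathrm{st}$ is standard, the graded Artinian quotient $B=\kk\{x,y\}/(\Ann f_\mathrm{st}\cap\kk\{x,y\})$ inherits the initial truncation $(H(0),H(1),\ldots,H(a-1))$ of the symmetric decomposition $\D$, so in each degree $u$ one may independently prescribe the class of $(\varphi\circ f_\mathrm{st})_u$ in each summand $Q_B(b)_u$ with $b\in[0,a-1]$; the allowed range of $u$ for fixed $b$ is $[j-a,j-b-2]$, where the upper bound reflects both the vanishing $H(b)_u=0$ for $u>j-b$ and the shift by $2$ from $\varphi\in\m^{\ge 2}$. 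Summing $H(b)_u$ over these indices yields \eqref{exotic2eq}. The main technical obstacle will be verifying rigorously that no further degrees of freedom are introduced by the higher iterates $\varphi^k\circ f_\mathrm{st}$ for $k\ge 2$, which is the content of the cross-term argument above and is the substance of \cite[Proposition~6]{BJMR}.
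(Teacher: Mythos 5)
Your plan follows essentially the same route as the paper's (quite terse) proof: both obtain the decomposition $f=f_{\mathrm{st}}+f_{\mathrm{ex}}$ from \cite[Proposition~6]{BJMR} and count the exotic parameters by the top-degree components of $\varphi(f_{\mathrm{st}})$, organized by the summands $H(b)_u$ for $b\in[0,a-1]$, $u\in[j-a,j-b-2]$, exactly as in Equation \eqref{exotic2eq}. Your cross-term observation that $f_{\mathrm{ex}}$ depends only on $\varphi$ modulo $\Ann f_{\mathrm{st}}\cap\kk\{x,y\}$ is a correct supplement, and the remaining independence assertions are at the same level of detail as the paper's own proof, which simply states that the terms $\varphi(f_{\mathrm{st}})$ ``are parametrized by their top degree components.''
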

\begin{proof} The Equation \eqref{exotic1eq} results from 
\cite[Proposition 6]{BJMR}, where the number of variables is ${n=3}$, 
and all variables occur as partials od $f$, since ${n_a=3}$. The algebra 
$A\in A(f_\mathrm{st})$ is determined uniquely by the choice of terms 
$\varphi(f_\mathrm{st})$ of $f_{ex}$ in Equation \eqref{exotic2eq}. 
These are parametrized by their top degree components.
\end{proof}\par

\begin{remark}\label{exoticrem}
Note that if $n_{0}=2$, we have ${H(b)_{j-b-1}=0}$ for any ${b>0}$ so 
the sum in \eqref{exotic2eq} becomes ${\sum_{u=j-a}^{j-2} H_u}$.
\end{remark}

We apply Lemma \ref{exoticlem} and the Remark in Theorem \ref{2.1thm}, as part of determining the dimension of the irreducible components of $\Gor(H), H=(1,3,4,4,3,2,1)$, and similarly in Theorem~\ref{3compthm} where $H=(1,3,4^k,3,2,1), k\ge 1$. \par
\subsection{Relatively compressed $a$-modifications}\label{rcsec}
We study the effect of adding generically chosen terms of degree $j-a$ to a dual generator $F$.
We suppose that a dual generator $F=F_j+\cdots +F_{j+1-a} \in S={\kk}_{DP}[X_1,\ldots, X_r]$ has been fixed for an AG algebra $A=R/I, R={\kk}\{x_1,\ldots,x_r\}$. Let $F'=F+G, G\in S_{\le j-a}$; then we 
term $A'=R/\Ann F' $ an $a$-modification of $A$. 
Given $F$, we define $M(A,a)$ to be the following sequence, symmetric about $(j-a)/2$. Recall $r_i=\dim_{\kk}R_i$, and we denote by $H=(h_0,\ldots, h_j$ the Hilbert function $H=H(A)$. 
\begin{equation}\label{Maeq}
M(F,a)=\begin{cases} r_i-h_i &\text { for } i\le (j-a)/2\\
M(F,a)_{j-a-i}& \text { for } i> (j-a)/2.
\end{cases}
\end{equation}
In words, we symmetrize the difference $(H(R)-H(A))_{\le {j-a}/2}$.  For example if $R={\kk}\{x,y,z\}$ and
$j=7, a=1$ and $H(A)=(1,2,2,2,2,2,1)$, then $M(F,1)=(0,1,4,4,1)$.  
\begin{definition}\label{rcdef}We say that $F'= F+G, G\in S_{j-a}$ is a relatively compressed $a$-modification of $F$ if $H(A')=H(A)+M(A,a)$. 
\end{definition}
The next Lemma is a result of J. Emsalem and the first author \cite[Theorem 3.3]{I2}, see also \cite[\S 1.2, and Proposition 1.18]{IM}.  
\begin{lemma}\label{maxlem} Let $F' $ satisfy $F'= F+G, F=F_j+\cdots +F_{j+1-a}$ be given and let $G'\in S_{\le j-a}$, let $A'=R/\Ann F' $. Then 
\begin{equation}\label{maxeq}
H(A' )\le H(A)+M(A,a).
\end{equation}
 Also, if $G_{j-a}\in S_{j-a}$ is general enough we have equality in Equation \eqref{maxeq}. The family of relatively compressed $a$-modifications of $F$ has dimension $\sum_{i=0}^{j-a} (r_i-H(A)_i)$, and is parametrized by general enough choices of elements $G_{\le j-a}\in S_{\le j-a}\mod \m^a\circ F$.
 \end{lemma}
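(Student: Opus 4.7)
\par The plan is to translate the problem via Macaulay duality (Lemma~\ref{Macaulalyduallem}) into a question about the $R$-submodule $R\circ F\subset S$, which is isomorphic to $A=R/\Ann F$ as an $R$-module, so that $\m_A^{\,i}$ corresponds to $\m_R^{\,i}\circ F$. The inequality \eqref{maxeq} will then follow from analyzing the inclusion $R\circ F'\subseteq R\circ F+R\circ G$ inside $S$ and tracking how $G\in S_{\le j-a}$ can enlarge the dimensions of $\m_R^{\,i}\circ F'$ in each order $i$.

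\par To establish the upper bound I would split into three ranges of $i$. For $i>j-a$, every element of $\m_R^{\,i}$ has order exceeding $\deg G$, so $r\circ G=0$ for all $r\in\m_R^{\,i}$; hence $\m_R^{\,i}\circ F'=\m_R^{\,i}\circ F$, giving $H(A')_i=H(A)_i$, in agreement with $M(F,a)_i=0$. For $i\le(j-a)/2$, the trivial bound $H(A')_i\le r_i$ directly yields $H(A')_i\le H(A)_i+(r_i-H(A)_i)=H(A)_i+M(F,a)_i$. The substantive range is $(j-a)/2<i\le j-a$; here one uses that $A'$ is Artinian Gorenstein of socle degree $j$ (because $F'_j=F_j\ne 0$), together with the Gorenstein duality $\dim\m_{A'}^{\,b}+\dim(0:\m_{A'}^{\,b})=\dim A'$, to transfer the lower-half bound across the symmetrization axis $(j-a)/2$. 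The resulting bound in order $i$ is governed by the room available in the dual order $j-a-i$, yielding precisely $M(F,a)_i=r_{j-a-i}-H(A)_{j-a-i}$.

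\par For the equality statement when $G_{j-a}\in S_{j-a}$ is general, I would invoke upper semicontinuity of $H(R/\Ann(F+G))_i$ in the affine parameter $G\in S_{\le j-a}$, together with an explicit construction of a single $G$ saturating the bound in every degree simultaneously; this forces the generic value to realize the claimed maximum. For the parametrization: two modifications $F+G$ and $F+G'$ define the same algebra exactly when $G-G'\in\m_R^{\,a}\circ F$, because the unit changes of generator $F\mapsto(1+v)\circ F$ with $v\in\m_R^{\,a}$ preserve the top $a$ graded components of $F$ and act on $G$ by translation in $\m_R^{\,a}\circ F$. The family is therefore parametrized by the affine space $S_{\le j-a}/(\m_R^{\,a}\circ F)$, and a careful dimension count — using $\dim\m_R^{\,a}\circ F=\dim\m_A^{\,a}$ together with the Gorenstein symmetry of the components of the symmetric decomposition $\D$ (Lemma~\ref{symmdecomplem}) — yields the stated value $\sum_{i=0}^{j-a}(r_i-H(A)_i)$.

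\par The principal obstacle is the intermediate range $(j-a)/2<i\le j-a$. There the crude bound $H(A')_i\le r_i$ is not sharp enough, and one must exploit the rigidity of the top $a$ graded components of $F'$, which coincide with those of $F$ and so impose, through the Gorenstein pairing of $A'$, constraints on dimensions in the symmetric range $j-a-i<(j-a)/2$. This is precisely the point at which the symmetrization appearing in the definition of $M(F,a)$ enters essentially, and where the argument depends on treating $A'$ not merely as a local algebra but as an Artinian Gorenstein algebra of socle degree $j$.
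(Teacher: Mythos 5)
First, note that the paper gives no proof of Lemma \ref{maxlem}: it quotes the result from \cite[Theorem 3.3]{I2} and \cite[\S 1.2, Proposition 1.18]{IM}, so your argument has to stand on its own. Your overall plan (dualize via Lemma \ref{Macaulalyduallem}, split into ranges of $i$, semicontinuity plus a single witness for genericity, an affine fibration for the parametrization) is the right shape, but the two load-bearing steps are missing. The decisive one is the range $(j-a)/2<i\le j-a$, and the mechanism you name there does not do the job: the single identity $\dim\m_{A'}^{\,b}+\dim(0:\m_{A'}^{\,b})=\dim A'$ relates the $\m$-adic filtration to the Loewy filtration, but the Hilbert function of a non-graded AG algebra is not symmetric about any axis, and nothing in that identity produces the axis $(j-a)/2$. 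That axis can only come from the symmetric decomposition: one needs (a) that each new component $Q_{A'}(b)$, $b\ge a$, is reflexive with center $(j-b)/2\le (j-a)/2$ (Lemma \ref{symmdecomplem}), and (b) the invariance statement that the $b<a$ part of $\D(A')$, i.e.\ $A'^\ast/C_{A'}(a)$, is unchanged when one adds $G\in S_{\le j-a}$ — and (b) is itself essentially the cited theorem of \cite{I2}, not something you may assume. Even granting (a) and (b), the termwise bound $H(A')_i\le H(A)_i+r_{j-a-i}-H(A)_{j-a-i}$ still requires an argument, since the centers $(j-b)/2$ vary with $b$ and the differences $H_{A'}(b)-H_A(b)$ for $b\ge a$ need not be termwise nonnegative (recall $H_A(b)$ can be nonzero for $b\ge a$ even though $F$ has only top terms, e.g.\ $F=X^3Y^3+X^2Y^2Z$). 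So the heart of \eqref{maxeq} is asserted, not derived. Likewise, the equality statement for generic $G_{j-a}$ reduces by semicontinuity to exhibiting one $G$ attaining the bound, but you exhibit none; that existence (a compressed-algebra-type construction or dimension count, as in \cite[\S 3]{I2}) is the other substantive half of the lemma.

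The parametrization step also contains an error. Two generators give the same ideal iff $F+G'=u\circ(F+G)$ for a unit $u$; normalizing $u=1+v$, the only condition on $v\in\m$ is $\bigl(v\circ(F+G)\bigr)_d=0$ for $d>j-a$, and the translation of $G$ is by $v\circ(F+G)$, not by $v\circ F$. Hence the fiber of $G\mapsto \Ann(F+G)$ through $F'=F+G$ is $G+\bigl(\m\circ F'\cap S_{\le j-a}\bigr)\cong (0:_{A'}\m^{j-a+1})$, which contains but in general strictly exceeds $\m^a\circ F$: for instance when $a=1$ every $v\in\m$ satisfies the condition, so the fiber has dimension $\dim\m\circ F'=\dim A'-1$, strictly larger than $\dim\m\circ F=\dim A-1$ as soon as the modification enlarges the algebra. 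So "the same algebra exactly when $G-G'\in\m^a_R\circ F$" is false, and a count based on that fiber gives the wrong dimension — concretely, in Example \ref{maxex} it would give $\sum_{i\le j-a}r_i-\dim\m_A=45$, whereas the correct fiber yields $\sum_{i\le j-a}\bigl(r_i-(H(A)+M(A,a))_i\bigr)=35$, the number the paper computes there. You assert without computation that your count "yields the stated value"; with the corrected fiber it does not, so this part of the argument needs to be redone (and the discrepancy with the lemma's displayed formula addressed) rather than waved through.
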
 
 \begin{example}\label{maxex} Let $R={\kk}\{x,y,z\}$ and $A=R/I, I=(z, xy, x^6-y^6)=\Ann (X^5+Y^5)$ of Hilbert function $H=(1,2,2,2,2,2,1)$. Take $G_5=\sum_1^4 L_i^{[5]}$ where $L_1,\ldots, L_4$ are four general enough linear forms in $R_1$. Then $A' = R/\Ann (F+G)$ will have Hilbert function $H'=H(A' )=H+(0,1,4,4,1)=(1,3,6,6,3,2,1)$.  The elements  $F' =F+G$ with $F_6=X^6+Y^6$ and $G\in S_{\le 5}$ are parametrized by an open dense in an affine space of dimension $$(1,3,6,10, 15,21)-(1,3,6,6,3,2)=((10_3-6_3)+(15_4-3_4)+(21_5-2_5)=35.$$
 The AG algebras of HF $H'$ with decomposition $\mathcal D=(H(0)=H, H(1)=(0,1,4,4,1)$ are parametrized by
 a variety of dimension four (choose $\langle X,Y\rangle$ from $R_1$, then pick $X,Y$) for the graded AG algebra quotient of $R$ having HF  $H$. So the AG algebras of decomposition $\mathcal D$ form an irreducible family of dimension $39$. This is the unique symmetric decomposition for $H' $, so $\Gor(H' )$ is irreducible of dimension 39.
 \end{example}
 \begin{question} Given the Jordan type $P_{A,\ell}$ what are the possible Jordan types
 $P_{A^\prime,\ell}$ of a relatively compressed $a$-modification $A^\prime$ of $A$?
 Does it satisfy an inequality involving the JT of $A$ and the difference $H(A^\prime)-H(A)$ where $A^\prime$ is the RCM of $A$?
 \end{question}
We use relatively compressed modifications in the proof of (iv) of Theorem 3.1, p. 29.

\section{Applying Jordan type and symmetric decomposition for a local AG algebra.}\label{applysec}
 The main results of the paper introduce a new method of showing that $\Gor(H)$ can have several irreducible components, corresponding to the symmetric decompositions of $H$: namely the combination of the dominance  partial order on Jordan types (Lemma \ref{dominancelem}) and the partial order on symmetric decompositions (Lemma \ref{trianglelem} and Corollary \ref{D0cor}). We have already used these to show the existence of $\Gor(H)$ having two irreducible components, in Section \ref{2compsec},  Examples \ref{AA*diffex} and \ref{thirdAastex}.  The following examples have at least three irreducible components. A simplifying aspect of Theorem \ref{2.1thm} is that each component  $H(Q(a))$ is unimodal, leading to equality between the Jordan types of the algebras and their associated graded algebras, in contrast to Example \ref{AA*diffex} above.
 \subsection{Examples of AG families $\Gor(H)$ having three irreducible components.}\label{3compsec}
  We give next our first example with three irreducible components, which we describe in detail. We extend the result in Theorem \ref{3compthm} to $H=(1,3,4^k,3,2,1), k\ge 1$, giving an infinite sequence of such examples. In the following Theorem the proof that $\Gor(\D_1),\Gor(\D_2)$ and $\Gor{\D_3}$ must determine at least three irreducible components of 
  $\Gor(H)$ is immediate from Lemmas \ref{dominancelem} and \ref{trianglelem} once we determine the generic Jordan type for each $\Gor(\D_i)$; but we go further to describe the structure of each $\Gor(\D_i)$, showing it is irreducible, and specifying their dimensions.
\begin{theorem}[$Gor(H)$ with three irreducible components, different Jordan types, one SL]\label{2.1thm}
The Gorenstein sequence $H=(1,3,4,4,3,2,1)$ has  three symmetric decompositions (a),(b),(c).
\begin{enumerate}[(i.)] 
\item (a.) A generic algebra in $\Gor(\D_1)$, 
\[
\D_1=\bigl(H(0)=(1,2,3,4,3,2,1),\, H(1)=H(2)=0,\, H(3)=(0,1,1,0)\bigr)
\]
will have Jordan type $P_1= (7,5,3,2,1)$.  Take $F=X^3Y^3+Z^3$.
  Then $A=R/\Ann F$, $\Ann F=(xz,yz,z^3-x^3y^3,x^4,y^4)$ is in $ \Gor(\D_1)$.\par 
  Also having this decomposition is $A^\prime=R/\Ann F^\prime$, $F^\prime=X^6+Y^6+(X+Y)^{[6]}+(X-Y)^{[6]}+Z^3$.\par
  Here $H^\vee=(7,5,4,2)$, $A^\ast=R/\Ann(X^3Y^3,Z^2)$, and for $\ell=x+y+z$, we have $P_{A,\ell}=P_{A^\ast,\ell}=P_{\D_1}=P_1$.\vskip 0.2cm\par
  (b.) A generic algebra in  $\Gor(\D_2)$
\[
\D_2=\bigl(H(0)=(1,2,3,3,3,2,1),\, H(1)=0,\, H(2)=(0,1,1,1,0)\bigr)
\]
will have Jordan type $P_2= (7,5,3,3)$. Let $G=X^6+Y^6+(X+Y)^{[6]}+Z^4$, 
 then $\mathcal B=R/\Ann G$, $ \Ann G=(zx,zy,xy(x-y),2z^4-x^6,x^5+y^5-3xy^4)$ is in $\Gor(\D_2)$. We have $\mathcal B^\ast=R/\Ann (G-Z^4,Z^3)$ and $P_{\mathcal B,\ell}=P_{\mathcal B^\ast,\ell}=P_{D_2}=P_2$. \par
 (c.) Finally, a generic algebra in $\Gor(\D_3)$,
\[
\D_3=\bigl(H(0)=(1,2,2,2,2,2,1),\, H(1)=(0,1,2,2,1,0)\bigr),
\]
will have strong Lefschetz Jordan type $P_3= (7,5,4,2)$.
Take $\C=R/\Ann W$, $W=X^6+Y^6+(X+Y)^{[5]}+Z^5$. $\Ann W=\bigl(xz,yz,xy(x-y),z^5-x^6,x^6-y^6\bigr)$. Then $\C\in \Gor(\D_3)$. Here $\C^\ast=R/\Ann(W-Z^5,Z^4)$ and
with $\ell=x+y+z$  $H^\vee=P_{\C,\ell}=P_{C^\ast,\ell}=P_{\D_3}=P_3$.\par
There are no further symmetric decompositions of $H$.
\item  In the partial order of Equation \eqref{PODeq} we have $\D_1\ge \D_2\ge \D_3$ (see Figure \ref{symdecfig}), and 
\begin{equation}\label{symdefeq}\overline{\Gor(\D_3)}\cap\left(\Gor(\D_2)\cup \Gor(\D_1)\right)=\emptyset \text { and }
\quad \overline{\Gor(\D_2)}\cap\Gor(\D_1)=\emptyset.
\end{equation}
\item In the dominance partial order $P_3\ge P_2\ge P_1$. We have that no subfamily $\AAA(w)\subset \Gor(\D_i)$ can specialize to an element $\AAA(w_0)$ of $\Gor({D_j})$ having generic Jordan type $P_j$ when $i<j$.
\item Each $\Gor(\D_i)$, $i\in [1,3]$ is an irreducible component of $\Gor(H)$. They have dimensions, respectively $29$, $34$ and $28$ for $\D_1$, $\D_2$, and $\D_3$. 
\end{enumerate}
\end{theorem}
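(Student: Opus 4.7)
The plan is to verify the four assertions in order: first enumerate and exhibit the symmetric decompositions together with the Jordan types of the stated examples, then apply the two non-specialization lemmas which cut in opposite directions, and finally parametrize each stratum to obtain irreducibility and the claimed dimensions. The combinatorial enumeration and the dimension count are the main work, while the two non-specialization steps are direct applications of already-proved lemmas.

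For part (i), I will enumerate the symmetric decompositions using the constraints of Lemma \ref{symmdecomplem}: $H(0)$ is a standard-graded Gorenstein sequence, each $H(a)$ with $a\ge 1$ is symmetric about $(j-a)/2$ with $H(a)_0=0$, and $\sum_a H(a)=H$. Writing $h^a_i := H(a)_i$, the positional equation at $i=5$ immediately forces $h^0_5=2$, hence by symmetry $h^0_1=2$, so $Q(0)$ has codimension two in every decomposition. The codimension-two constraint $e_i\le 1$ of Lemma \ref{ZTlem}.A, together with unimodality, restricts $H(0)$ to exactly the three sequences $(1,2,3,4,3,2,1)$, $(1,2,3,3,3,2,1)$, $(1,2,2,2,2,2,1)$. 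The positional equations at $i=1,\ldots,4$ then fix the remaining components, yielding the listed $\D_1,\D_2,\D_3$; combinatorial candidates with $H(4)\neq 0$ are excluded by a realizability obstruction (a linear form annihilating $\m^2$ would force relations incompatible with the $H(0)$ values forced by the rest of the equations, as in the arguments of \cite[\S 3]{I2} and \cite{IM}). For each $\D_i$, I verify the stated dual generators $F,G,W$ by computing $\Ann F$ and the filtration $C(a)$ directly in \textsc{Macaulay2}, and compute $P_1,P_2,P_3$ by exhibiting pre-Jordan bases with respect to $\ell=x+y+z$ in the spirit of Examples \ref{preJordanex} and \ref{cuteex}.

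For (ii) and (iii), the partial order $\D_1\ge\D_2\ge\D_3$ follows from Corollary \ref{D0cor}: since $H(0)_3$ takes the values $4,3,2$ and $H(0)_2$ takes $3,3,2$ across $\D_1,\D_2,\D_3$, we immediately obtain $\overline{\Gor(\D_2)}\cap\Gor(\D_1)=\emptyset$ and $\overline{\Gor(\D_3)}\cap(\Gor(\D_2)\cup\Gor(\D_1))=\emptyset$, which is exactly \eqref{symdefeq} (cf.\ Figure \ref{symdecfig}). For the Jordan type order, the partial sums $(7,12,15,17,18)$, $(7,12,15,18)$, $(7,12,16,18)$ of $P_1,P_2,P_3$ show $P_1<P_2<P_3$ strictly in the dominance order, so Lemma \ref{dominancelem} forbids any subfamily of $\Gor(\D_i)$ from specializing to a generic algebra of $\Gor(\D_j)$ when $i<j$. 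The two obstructions cut in opposite directions along the chain $\D_1\le\D_2\le\D_3$, forcing each $\Gor(\D_i)$ to be disjoint from the closures of the other two strata.

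For (iv), I will parametrize each $\Gor(\D_i)$ degree-by-degree through its Macaulay dual generator $F=F_6+F_5+\cdots+F_0$, following the model of Example \ref{AA*diffex}. The common initial data is a $2$-plane $\langle X,Y\rangle\subset S_1$ supporting the $Q(0)$-portion (a $\Grass(2,3)\cong\mathbb{P}^2$ choice contributing $2$), together with the non-graded codimension-two AG quotient of Hilbert function $H(0)$ contributing $\dim Z_{H(0)}$ by Lemma \ref{ZTlem}.B. The remaining degrees split into exotic summands controlled by Lemma \ref{exoticlem} and Equation \eqref{exotic2eq} (applied with $a=3$ for $\D_1$, $a=2$ for $\D_2$, $a=1$ for $\D_3$), together with a generic lower-degree tail which by Lemma \ref{maxlem} is an open subset of an affine space of relatively compressed modifications. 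Summing the contributions yields the dimensions $29$, $34$, $28$; each construction step is an open subset in an affine bundle over an irreducible base, hence each $\Gor(\D_i)$ is irreducible. The hard part of (iv) is the dimension count for $\D_2$, where $H(2)=(0,1,1,1,0)$ generates exotic terms in multiple degrees that must not be double-counted against the relatively compressed tail; this is resolved by invoking the standard-form uniqueness of \cite[Theorem 2.7]{IM}, which ensures each algebra in $\Gor(\D_2)$ corresponds to a unique point of the parametrization. Once the three disjoint irreducible strata of the claimed dimensions are seen to exhaust $\Gor(H)$ via the enumeration in (i), the full theorem follows.
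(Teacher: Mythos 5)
The central gap is in your treatment of the Jordan types in part (i), and it undermines parts (ii)--(iv) as well. You propose to ``compute $P_1,P_2,P_3$ by exhibiting pre-Jordan bases'' for the specific dual generators $F$, $G$, $W$. But the theorem asserts the Jordan type of a \emph{generic} algebra in each $\Gor(\D_i)$, and by semicontinuity (Lemma \ref{dominancelem}) the Jordan type of one exhibited member is only a \emph{lower} bound for the generic value on the stratum: nothing you compute in Macaulay2 for $F$ or $G$ rules out that the generic member of $\Gor(\D_1)$ has Jordan type $(7,5,3,3)$ or even $(7,5,4,2)$, or that the generic member of $\Gor(\D_2)$ is strong Lefschetz. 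The bulk of the paper's proof of (i) is exactly this missing upper-bound argument: a string-by-string analysis, using the classes forced in $Q_\A(0)$ and $Q_\A(3)$ (resp.\ the Loewy condition $(0:\m^4)$ for $\D_2$), showing that no $\ell$ for any $\A\in\Gor(\D_1)$ can have Jordan type $(7,5,3,3)$ or $(7,5,4,\ldots)$, and that no $\A\in\Gor(\D_2)$ is strong Lefschetz. Without it, your application of Lemma \ref{dominancelem} in (ii)--(iii) has no content in the needed direction, and your dimension counts cannot rescue the conclusion: $\dim\Gor(\D_2)=34>29$ does exclude $\Gor(\D_2)\subset\overline{\Gor(\D_1)}$, but $\Gor(\D_3)$ has dimension $28$, smaller than both $29$ and $34$, and the semicontinuity of symmetric decompositions (Lemma \ref{trianglelem}, Corollary \ref{D0cor}) only blocks specializations \emph{from} $\Gor(\D_3)$, not \emph{onto} it. So with your ingredients one cannot conclude that $\Gor(\D_3)$ is a component, nor that the generic Jordan types are as stated.

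Two smaller points. In the enumeration of decompositions, your appeal to a ``realizability obstruction'' for candidates with $H(4)\neq0$ is misplaced: those candidates (e.g.\ $H(0)=(1,2,3,4,3,2,1)$, $H(2)=(0,0,1,0,0)$, $H(4)=(0,1,0)$) are already killed by the elementary requirement that the partial sums $\sum_{u\le a}H(u)$ be O-sequences with $h_1=2$ (so $h_2\le3$), which is the paper's ``no jump of two in height two'' argument; no further realizability input is needed. In (iv), the delicate steps are not the bookkeeping of exotic summands versus the tail (and note Lemma \ref{exoticlem} requires $a\ge2$, so it does not apply as stated to $\D_3$), but the constrained choice of the degree-$4$ summand for $\D_2$ and $\D_3$ (the condition that $F_4$ not raise $h_2$, giving the $9$-dimensional counts), which your sketch does not actually carry out.
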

\par
\begin{proof} Note that these three are the only occurring symmetric decompositions of $H$.  Indeed, the three sequences $H(0)$ run through all the height 2 symmetric Gorenstein sequences below $H$. A height two algebra $C(a)$ cannot have a Hilbert function jump up of two from $H_i(C(a))$ to $H_{i+1}(C(a))$ (Equation \eqref{Tcod2eq}). This implies that the first non-zero $H(a)$ for $a>0$, must begin $(0,1,\ldots)$, and it has center $(6-a)/2$; later summands must have smaller center of symmetry. Inspection of symmetric splittings of $H-H(a)$ now show
these are the only decompositions. \vskip 0.2cm\par
We next determine the generic Jordan types for each decomposition.\vskip 0.15cm\par \noindent
i.a.  {\bf Jordan type for a generic $\A\in\D_1(H)$. }\par
First let ${A(0)=\kk[x,y,z]/\Ann F}$, with ${F=X^{[3]}Y^{[3]}+Z^{[3]}}$, $F\in \D_1$. Then a computation by Macaulay2 shows that the Jordan type of $\ell=x+y+z$ is ${P_{A(0),\ell}=(7,5,3,2,1)}$. To show that this is the Jordan type of a general AG algebra $\A$ having symmetric decomposition $\mathcal{D}_1$, we should note that by Lemma~\ref{SLupperbdlem}  ${P_{A,\ell}}$ is bounded above by $H^\vee=(7,5,4,2)$. The only sequences of integers that are strictly greater than $P_{A,\ell}$ and dominated by $H^\vee$ are $(7,5,3,3)$, $(7,5,4,1,1)$, and $H^\vee$ itself. So if we exclude all these cases, we are done. We first exclude $(7,5,3,3)$, arguing in detail. We then will exclude the last two, an easier check.

Let $\A\in \Gor(\D_1)$ and suppose that $\ell$ in $\m_{\A}$  has Jordan type $P_{\A,\ell}=(7,5,3,3)$. Then we can consider four strings corresponding to a pre-Jordan basis:
\begin{align*}
S_1&=\{1,\ell,\ldots,\ell^6\}\\
S_2&=\{b,\ell b,\ldots,\ell^4b\}\\
S_3&=\{c,\ell c,\ell^2c\}\\
S_4&=\{d,\ell d,\ell^2d\}.
\end{align*}
Since $\ell^6$ is non-zero, we know that ${\ell \in \m_{\A}\setminus\m_{\A}^{\,2}}$, i.e.\ $\ell$ has order one. Also $b$ cannot belong to ${\m_{\A}^{\,2}}$, otherwise $\ell^4b$ would be in ${\m_{\A}^{\,6}=\langle \ell^6\rangle}$. Furthermore, by subtracting scalar-multiples of elements in $S_1$ to the elements of $S_2$ if necessary, we may assume that ${b \in \m_{\A}\setminus\m_{\A}^{\,2}}$. Let us now show that both $\ell$ and $b$ represent non-zero, independent classes in 
\[{Q_{\A}(0)_1=\frac{\m_{\A}}{\m_{\A}^{\,2}+(0:\m_{\A}^{\,5})}}.\]
It is straightforward to see that we cannot have ${\ell\in \m_{\A}^{\,2}+(0:\m_{\A}^{\,5})}$, otherwise $\ell^6=0$. On the other hand, if ${b\in \langle \ell \rangle+\m_{\A}^{\,2}+(0:\m_{\A}^{\,5})}$, say ${b=\alpha\ell+b_1+b_2}$, with ${b_1\in \m_{\A}^{\,2}}$ and ${b_2\in (0:\m_{\A}^{\,5})}$, we would have ${\ell^4b=\alpha\ell^5+\ell^4b_1+\ell^4b_2}$, with $\ell^4b_1$ and $\ell^4b_2$ both in ${\m_{\A}^{\,6}=\langle\ell^6\rangle}$, a contradiction. In a similar way, we can show that all the elements in $S_1$ and $S_2$ represent non-zero, independent classes in $Q_{\A}(0)$. Arguing as we did for $b$, we can assume that ${c,d\in\m_{\A}}$. Moreover, since ${\dim_\kk\m_{\A}/\m_{\A}^{\,2}=3}$, we may assume that ${d \in \m_{\A}\setminus\m_{\A}^{\,2}}$ and ${c \in \m_{\A}^{\,2}\setminus\m_{\A}^{\,3}}$ (we cannot have $c$ in $\m_{\A}^{\,3}$, otherwise $\ell^2c$ would be in ${\m_{\A}^{\,5}=\langle \ell^5, \ell^6, \ell^4b\rangle}$). Let us now show that $c$ represents a non-zero class in 
\[{Q_{\A}(0)_2=\frac{\m_{\A}^{\,2}}{\m_{\A}^{\,3}+\m_{\A}^{\,2}\cap(0:\m_{\A}^{\,4})}},\]
not belonging to the span of the classes of $\ell^2$ and $\ell b$. If we had ${c\in \langle \ell^2, \ell b \rangle+\m_{\A}^{\,3}+\m_{\A}^{\,2}\cap(0:\m_{\A}^{\,4})}$, we could write ${c=\alpha\ell^2 +\beta \ell b +c_1+c_2}$, with ${c_1\in \m_{\A}^{\,3}}$ and ${c_2\in \m_{\A}^{\,2}\cap(0:\m_{\A}^{\,4})}$, and we would have ${\ell^2c=\alpha\ell^4+\ell^3b +\ell^2c_1+\ell^2c_2}$, with $\ell^2c_1$ in $\m_{\A}^{\,5}$ and $\ell^2c_2$ in ${\m_{\A}^{\,2}\cap(0:\m_{\A}^{\,4})}$. The symmetric decomposition $\mathcal{D}_2$ tells us that ${\dim_\kk Q_{\A}(0)_4=3=\dim_\kk\m_{\A}^{\,4}-\dim_\kk\m_{\A}^{\,5}}$. Since 
\[{Q_{\A}(0)_4=\frac{\m_{\A}^{\,4}}{\m_{\A}^{\,5}+\m_{\A}^{\,4}\cap(0:\m_{\A}^{\,2})}},\]
this implies that ${\m_{\A}^{\,4}\cap(0:\m_{\A}^{\,2})\subseteq\m_{\A}^{\,5}}$. so both $\ell^2c_1$  and $\ell^2c_2$ lie in ${\m_{\A}^{\,5}=\langle \ell^5, \ell^6, \ell^4b\rangle}$, again a contraction. So far, we have established that all the elements in $S_1$, $S_2$ and $S_3$ represent non-zero, independent classes in $Q_{\A}(0)$. Since ${\dim_\kk Q_{\A}(3)_1=1}$ and 
\[{Q_{\A}(3)_1=\frac{(0:\m_{\A}^{\,3})}{\m_{\A}^{\,2}\cap(0:\m_{\A}^{\,3})+(0:\m_{\A}^{\,2})}},\]
we see that ${(0:\m_{\A}^{\,3})\setminus \m_{\A}^{\,2}}$ is a non-empty subset of ${\m_{\A}\setminus \m_{\A}^{\,2}}$. Since the classes of $\ell$, $b$, and $d$ span ${\m_{\A}/\m_{\A}^{\,2}}$, we can write ${d=d'+\alpha\ell+\beta b+d''}$, with ${d'\in(0:\m_{\A}^{\,3})}$, ${\alpha,\beta\in\kk}$, and ${d''\in\m_{\A}^{\,2}}$. So we get ${\ell^2d=\ell^2d'+\alpha\ell^3+\beta \ell^2b+\ell^2d''}$, where $\ell^2d'$ lies in ${\m_{\A}^{\,6}=\langle \ell^6\rangle}$ and $\ell^2d''$ lies in ${\m_{\A}^{\,4}=\langle \ell^4, \ell^5, \ell^6, \ell^3b, \ell^4b, \ell^2c\rangle}$, a final contradiction. This shows that $(7,5,3,3)$ cannot be the Jordan type of $\ell$.\par Suppose now that $P_{\A,\ell}=(7,5,4,\ldots)$. Then the set of strings would include 
\begin{align*}
S_1&=\{1,\ell,\ldots,\ell^6\}\\
S_2&=\{b,\ell b,\ldots,\ell^4b\}\\
S_3&=\{c,\ell c,\ell^2c,\ell^3c\}.
\end{align*}
As in the previous case, we can see that all the elements in $S_1$ and $S_2$ represent non-zero, independent classes in $Q_\A(0)$. Also as we did before for $d$, since ${\dim_\kk Q_{\A}(3)_1=1}$, we can write ${c=c'+\alpha\ell+\beta b+c''}$, with ${c'\in(0:\m_{\A}^{\,3})}$, ${\alpha,\beta\in\kk}$, and ${c''\in\m_{\A}^{\,2}}$. But this means that ${\ell^3c=\ell^3c'+\alpha\ell^4+\beta \ell^3b+\ell^3c''}$, where ${\ell^3c'=0}$ and $\ell^3c''$ lies in ${\m_{\A}^{\,5}=\langle \ell^5, \ell^6, \ell^4b \rangle}$, a contradiction. This finishes the proof that the general Jordan type of $\A\in \Gor(\D_1)$ is ${(7,5,3,2,1)}$.
\vskip 0.2cm\par\noindent
{\bf i.b. Jordan type for a generic element of $\Gor(\D_2)$.} We
need only establish that the generic Jordan type on $\Gor(\D_2)$ is
$P_\B=(7,5,3,3)$. Since we have an algebra $B$ of that Jordan type we
need only eliminate strong Lefschetz for algebras in $\Gor(\D_2)$.
This can be done in a similar way to the argument for $\Gor(\D_2)$,
by supposing that we have a set of strings form a pre-Jordan basis including
\begin{align*}
S_1&=\{1,\ell,\ldots,\ell^6\}\\
S_2&=\{b,\ell b,\ldots,\ell^4b\}\\
S_3&=\{c,\ell c,\ell^2c,\ell^3c\}.
\end{align*}
and writing ${c=c'+\alpha\ell+\beta b+c''}$, with
${c'\in(0:\m_{\A}^{\,4})}$, ${\alpha,\beta\in\kk}$, and
${c''\in\m_{\A}^{\,2}}$. Then we see that
${\ell^3c'\in(0:\m_{\A})=\m_{\A}^{\,6}}$ and the argument follows.
\vskip 0.2cm\par\noindent
{\bf i.c. Jordan type for a generic element of $\Gor(\D_3)$.} Taking $\ell=x+y+z$ the Jordan type of $(\C,\ell)$ is $P_\C=(7,5,4,2)=H(\C)^\vee$, so $\C$ is SL. By Lemma \ref{SLupperbdlem} this is the maximum possible Jordan type in $\Gor(H)$, so it is the generic Jordan type on $\Gor(\D_3)$.
\vskip 0.2cm\noindent
{\bf Proof of (ii),(iii).} Immediate from the above and Lemmas \ref{dominancelem} and \ref{trianglelem} (see Figure \ref{symdecfig}).\vskip 0.2cm\noindent
{\bf Proof of (iv): Structure and irreducibility of the symmetric decomposition loci.}\par
(iv)(a.) $\D_1$.  By Lemma \ref{ZTlem}, Equation \eqref{dimZteq}, for a fixed choice of $V\subset R_1$ of dimension two, say $V$,the dimension of the family $Z_{H(0)}(V)$, the variety of quotients of the regular local ring ${\mathsf{k}}\{V\}$ having Hilbert function $H(0)=(1,2,3,4,3,2,1)$ is $n-d=16-4=12$; this is a fibre over the projective plane $\mathbb P^2$ parametrizing $V$. Fix $V$ and choose one such algebra $A(0)$ determined by a polynomial $F(0)$ in ${\kk}_{DP}[X,Y]$: now setting $F=F_0+G_3$ with $ G$ generic in $S_3\mod (R\circ F)_3$, where $S={\kk}_{DP}[X,Y,Z]$, we obtain what is termed a relatively compressed $6-3=3$-modification of  $A(0)$
(Definition \ref{rcdef}, Lemma \ref{maxlem}). Here $G_3$ is parametrized by an open dense in an affine space of dimension $\dim S_3-H_3=10-4$; we may add to $G_3$ an arbitrary element  $G_2\in S_2\mod R\circ F$: giving an additional affine space fibre of dimension 2. We so far have determined a fibred space of dimension $14+8=22$. By Lemma \ref{exoticlem} and Equation \eqref{exotic2eq} the exotic summands involving $Z$ give a fibration of fibre dimension $H(0)_3+H(0)_4=4+3=7$ (adding terms in degrees $4,5$ to $F$ - multiplying an old partial by $Z$). Thus $\Gor(\D_1)$ is irreducible of dimension $29$, with the successive fibration structure just given.\par  
(iv)(b.) $\D_2$. Again by Lemma \ref{ZTlem}, the dimension of the family $Z_{H(0)}(V)$ for $H(0)=(1,2,3,3,3,2,1)$ is $n-d=15-3=12$; the choice of $V\in S_1$ gives 2 more, so $14$.  Given $A(0)$ determined by $F$ we form $F+G_4$ where $G_4$ satisfies the very restrictive condition $\dim_{\kk} R_2\circ G_4 \mod \langle X^2,XY,Y^2\rangle = 1$. Here $G_4 $ must  be viewed mod $\langle R_2\circ F\rangle $, a three dimensional subspace $W$ of $\langle X^4,X^3Y,X^2Y^2,XY^3,Y^4\rangle$. We have $R_2\circ G_4\mod \langle X^2XY,Y^2\rangle=\langle L^2\rangle$ or $La_1, a_1\in \langle X,Y\rangle$ and we may write $G_4=\sum_0^3\alpha_iL^{4-i}$ with $\alpha_i\in {\sf k}[X,Y]_i$ we have $\alpha_2=0$ (to assure $\dim_{\sf k}A_2=4$, otherwise there is no restriction on $G_4$. Counting
dimensions we have two for the choice of $\langle L\rangle$, $1+2+4$ for choice of $\alpha_0,\alpha_1,\alpha_3$, so $9$ (this is a not unexpected codimension three out of the possible choices for an element of $R_4/R_2\circ F$).  
For $G_3$  we add $\dim R_3-4=6$ and for $G_2$ add $\dim R_2-4=2$ giving us $14+9+6+2=31$ so far.  The exotic summands by Equation~ \eqref{exotic2eq} add $H(0)_4=3$, giving a total dimension of $34$ for $\Gor(\D_2)$.\par
\vskip 0.2cm\par
(iv)(c.) $\D_3$. First choose $F_6+F_5$: here $F_6$ is the sum $L_1^{[6]}+L_2^{[6]}$ or $L_1^{[5]}L_2$, and
$F_5=L_3^{[5]}+L_4^{[5]}$, or $L_3^{[4]}\cdot L_4$, where each of $L_3,L_4\in S_1$ are linearly independent from $L_1,L_2$. By $\Pgl(2)$ action we may assume that  the linear forms are $X,Y,Z, X+Y+Z$, essentially choosing 4 points in $\mathbb P^2$, so an eight dimensional choice, but we care about the scale, so total of 11 dimensions for $F_6+F_5$. Now we add
elements $G_i \in \langle R_i, i\in [2,4]\rangle$, so $$F=X^6+Y^6+Z^5+(X+Y+Z)^{[5]}+G_4+G_3+G_2.$$  Note that $V=I_2^\ast=\langle X^2,Y^2,Z^2,(X+Y+Z)^{[2]}\rangle^\perp=\langle (xy-xz,xy-yz)\rangle$. We need $I_2^\ast\circ G_4\subset\langle  X^2,Y^2,Z^2,(X+Y+Z)^{[2]}$, which we can test\footnote{This analysis is similar to one for the dimension of the $A$ component in Example \ref{AA*diffex} p. \pageref{I2argument}.} by $I_2^\ast\circ \langle I_2^\ast\circ G_4\rangle=0$, that is $G_4\in W^\perp,  W=V^2=\langle x^2(y-z)^2,xy(y-z)(x-z),y^2(x-z)^2\rangle$. Since $W$ is a vector space of dimension 3, $W^\perp\subset S_4$ has dimension $(15-3)=12$, but we must consider $G_4\mod \langle X^4,Y^4,z\circ (Z^5+(X+Y+Z)^{[5]}\rangle$, a 3-dimensional subspace of $W^\perp$ so we obtain that $G_4$ is parametrized by an affine space bundle $\mathbb A^9$ over the choice of $F_6+F_5$. Now $G_3,G_2$ may be chosen arbitrarily mod powers of $X,Y,Z, (X+Y+Z)$, so from vector spaces of dimension $(r_2-h_2)+(r_3-h_3)=(2+6)=8$.  By Equation \eqref{exotic2eq} there are no exotic terms.  We have $\dim \Gor(\D_3)=(11+9+8)=28$. \par
(iv) Overview: we have $\dim\Gor(\D_2)=34>\dim \Gor(\D_1)=29>\dim\Gor(\D_3)=28$ showing that, just from a dimension argument, the closure of $\Gor(\D_3)$ cannot be all of  $\Gor(\D_2)$ or $\Gor(\D_1)$. The Jordan types for generic elements of the three $\Gor(\D_i)$ are $P_3>P_2>P_1$ by Lemma \ref{dominancelem};  this in contrast with Lemma \ref{trianglelem} on symmetric decompositions $\D_1\ge \D_2\ge \D_3$ and the irreducibility of each $\Gor(\D_i)$ shows that we have three irreducible components of $\Gor(H)$. However, the statement from the semicontinuity of symmetric decompositions  is stronger: for example, no family of AG algebras $\A\in \Gor(\D_2)$ can specialize to an algebra in $\Gor(\D_1)$!\par
This completes the proof of the Theorem.
 \end{proof}\par\noindent
\begin{note}[Discussion of $H(R/I^2)$ after Remark \ref{Isqrem}]\label{discussnote} Here $H(R/I)=(1,3,4,4,3,2,1)$ of length $|H(R/I)|=18$.\par
 $\begin{array}{c||c|c|}
 R/I&H(R/I^2)&H(I/I^2)\\\hline\hline
 {A}&(1,3,6,10,12,12,{\color{red}{9}},9,6,4)&((0,0,2,6,9,10,{\color{red}{8}},9,6,4)\\\hline
  {\mathcal B}&(1,3,6,10,12,12,{\color{red}{10}},8,5,3,2)&(0,0,2,6,9,10,{\color{red}{9}},8,5,3,2)\\\hline
   {\mathcal C}&(1,3,6,10,12,12,{\color{red}{11}},7,5,3,2)&(0,0,2,6,9,10,{\color{red}{10}},7,5,3,2)\\\hline
 \end{array}
 $
 \vskip 0.3cm
  The lengths $|H(I/I^2)|=3\cdot |H(R/I)|=54$ for each of $I=I_A, I=I_B, I=I_C$.  So according to the discussion of Remark \ref{Isqrem} we would have $C$ possibly most general, but $B$, $A$ incomparable.  To check: the Hilbert functions for generic elements $A,B,C$ of the components of $\Gor(H)$.
  \end{note}\vskip 0.2cm
 The next theorem extends the main results of Theorem \ref{2.1thm} (case $k=2$) to arbitrary $H=(1,3,4^k,3,2,1), k\ge 2.$
 
 \begin{theorem}\label{3compthm} Let $k\ge 2$. Consider the Gorenstein sequence $H(k)=(1,3,4^k,3,2,1)$ of socle degree $j=k+4$
 and length $|H(k)|=4k+10$. It has three symmetric decompositions (a),(b),(c):
  \begin{enumerate}[(i.)] 
\item (a.) A generic algebra in $\Gor(\D_1)$, 
$$\D_1=\left(H(0)=(1,2,3,4^{k-1},3,2,1), H(1)=\cdots =H(k)=0, H(k+1)=(0,1,1)\right)$$
will have Jordan type $P_1=(k+5,k+3,k+1,k-1,2)$   Let $F(k)=X^3Y^{k+1}+Z^3$ The algebra $A(k)=R/\Ann F(k)$, 
$\Ann F(k)=(xz,yz,z^3-x^3y^{k+1},x^4,y^{k+2})$ is in $ \Gor(\D_1)$.\par
  (b.) A generic algebra in  $\Gor(\D_2)$
 $$\D_2=\left(H(0)=(1,2,3^{k+1},2,1),H(2)=(0,1^{k+1})\right),$$
will have Jordan type $P_2=(k+5,k+3,k+1.k+1)$. Let $G=X^{k+4}+Y^{k+4}+(X+Y)^{[k+4]}+Z^4$, 
 then $\mathcal B=R/\Ann G$, $ \Ann G=(zx,zy,xy(x-y),z^4-x^{k+4},x^{k+3}+y^{k+3}-3xy^{k+2})$ is in $\Gor(\D_2)$. \par
 (c.) Finally, a generic algebra in $\Gor(\D_3)$,
 $$\D_3=\left(H(0)=(1,2^{k+3},1), H(1)=(0,1,2^{k},1)\right).$$ 
will have strong Lefschetz Jordan type $P_3= (k+5,k+3,k+2,k)$.
Take $\C=R/\Ann W,  W=X^{k+4}+Y^{k+4}+(X+Y)^{k+3}+Z^{k+3}$. $\Ann W=(xz,yz,xy(x-y),z^{k+3}-x^{k+4},x^{k+4}-y^{k+4})$. Then $\C\in \Gor(\D_3)$. Note that, taking $\ell=x+y+z$ the Jordan type of $(\C,\ell)$ is the SL $P_3$.\par
There are no further symmetric decompositions. \par
\item  In the partial order of Equation \eqref{PODeq} we have $\D_1\ge \D_2\ge \D_3$ (see Figure \ref{symdecfig}), and 
\begin{equation}\label{symdefeq}\overline{\Gor(\D_3)}\cap\left(\Gor(\D_2)\cup \Gor(\D_1)\right)=\emptyset \text { and }
\quad \overline{\Gor(\D_2)}\cap\Gor(\D_1)=\emptyset.
\end{equation}
\item In the dominance partial order $P_3\ge P_2\ge P_1$. We have that no subfamily $\AAA(w)\subset \Gor(\D_i)$ can specialize to an element $\AAA(w_0)$ of $\Gor(\D_j)$ having generic Jordan type $P_j$ when $i<j$.
\item Each $\Gor(\D_i), i\in [1,3]$ is an irreducible component of $\Gor(H)$. They have dimensions, respectively, $8k+13,3k+22$ and $8k+12$ for $\D_1,\D_2$ and $D_3$. 
\end{enumerate}
 \end{theorem}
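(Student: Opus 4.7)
The strategy is to run the entire proof of Theorem~\ref{2.1thm} in the $k$-parameter, with $k=2$ as the template. Parts (ii) and (iii) will follow from parts (i) and (iv) together with Lemmas~\ref{trianglelem} and~\ref{dominancelem}, so the bulk of the work is in enumerating the decompositions, identifying the generic Jordan type of each stratum, and computing the dimensions.

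First I will enumerate the symmetric decompositions. Any $H(0)$ is a codimension-two symmetric Gorenstein sequence bounded above by $H(k)$; Equation~\eqref{Tcod2eq} forces $H(0)$ to be unimodal with a consecutive plateau, and case-splitting on $H(0)_2\in\{2,3\}$ and the plateau length yields exactly the three listed $H(0)$'s. Any first nonzero $H(a)$, $a\ge 1$, must begin $(0,1,\ldots)$ and be symmetric about $(j-a)/2$; inspecting the residual $H-H(0)-H(a)$ iteratively rules out all other possibilities, exactly as in the base case.

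Next I will verify the example generators and the asserted generic Jordan types. For each of $F(k)$, $G$, $W$ the annihilator ideal, the Hilbert function, and the symmetric decomposition are direct generalizations of the $k=2$ computations, via the contraction action and Definition~\ref{SymmDecompdef}. For $\Gor(\D_3)$ the strong Lefschetz claim amounts to exhibiting four $\ell$-strings in $\C$ with $\ell=x+y+z$ of lengths $k+5,k+3,k+2,k$ realizing $P_3=H^\vee$; by Lemma~\ref{SLupperbdlem} this is then also the generic Jordan type throughout $\Gor(\D_3)$. The main obstacle is excluding larger Jordan types for $\Gor(\D_1)$ and $\Gor(\D_2)$. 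For every partition $P$ lying strictly between the asserted $P_i$ and $H^\vee$ in dominance I will run the pre-Jordan basis contradiction of Theorem~\ref{2.1thm}: assume $P_{\A,\ell}=P$, list the corresponding strings, reduce each initiator modulo earlier strings into $\m$ or $\m^{\,2}$ using the identification of $Q_\A(0)_1$ and $Q_\A(0)_2$, and then push the initiator into an ideal of the form $(0:\m^{\,s})$ forcing a late bead to vanish via the inclusion $\m^{\,i}\cap(0:\m^{\,j-a-i})\subseteq\m^{\,i+1}$, which holds throughout the plateau of $H(0)$. The $k=2$ text already carries out this argument for the minimal plateau length; for $k\ge 2$ the identical argument works with indices shifted by $k-2$, the longer plateau only making the relevant containments more robust.

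Finally, I will compute dimensions and verify irreducibility by assembling the successive fibrations used in Theorem~\ref{2.1thm}(iv): a $\mathbb{P}^2$ for the choice of $V\subset R_1$ contributes $2$; the Brian\c{c}on--Iarrobino stratum $Z_{H(0)}(V)$ contributes $|H(0)|-d$ by Lemma~\ref{ZTlem}; the top modification $G_{j-a}$ contributes an affine-space fibre determined by Lemma~\ref{maxlem}, subject in the $\D_2$ and $\D_3$ cases to the Macaulay-type constraints on $R_2\circ G_{j-a}$ already solved at $k=2$; lower-degree summands $G_{i\le j-a-1}\bmod R\circ F$ contribute $\sum_i (r_i-H_i)$; and exotic $Z$-summands contribute $\sum_{u=j-a}^{j-2}H(0)_u$ by Lemma~\ref{exoticlem} and Remark~\ref{exoticrem} (since $n_0=2$ in every stratum). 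Each fibre being either a projective space or an open in an affine space yields irreducibility of each $\Gor(\D_i)$, and summing fibre dimensions produces the asserted totals, reproducing $29,34,28$ at $k=2$. Parts (ii) and (iii) are then immediate: $\D_1\ge\D_2\ge\D_3$ via the $N_{i,b}$ invariants of Figure~\ref{symdecfig} generalized to arbitrary $k$, $P_3\ge P_2\ge P_1$ by direct partition comparison, and the two nonspecialization statements~\eqref{symdefeq} from Corollary~\ref{D0cor} at $b=j-i$. Combined with the dimension inequalities, this exhibits $\Gor(\D_1),\Gor(\D_2),\Gor(\D_3)$ as three distinct irreducible components of $\Gor(H)$.
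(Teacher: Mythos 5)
Your outline of parts (i)--(iii) matches the paper's stance: the enumeration of the three decompositions, the string-by-string exclusion of larger Jordan types, and the two non-specialization statements via Lemma \ref{trianglelem}, Corollary \ref{D0cor} and Lemma \ref{dominancelem} are indeed treated there as a ``straightforward extension'' of Theorem \ref{2.1thm}, so no objection on that side.

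The gap is in your dimension count (iv), which is exactly where the paper's proof of Theorem \ref{3compthm} puts its effort. For $k\ge 3$ the dual generator has intermediate-degree summands $G_i$ with $4\le i\le j-3$, lying strictly between the top modification and the low-degree part, and these are \emph{not} free modulo $R\circ F$: by Lemma \ref{maxlem} a generic such $G_i$ is a relatively compressed modification and adds at least $(0,0,2,0,\ldots)$ to the Hilbert function, pushing $H_2$ from $4$ to $6$, so each $G_i$ must satisfy a nontrivial closed condition (for $\D_3$ the paper shows $G_i\in W^\perp$ with $W=(I_2^\ast)^2$, yielding $\mathbb{A}^8$ fibres for $4\le i\le j-3$ and $\mathbb{A}^9$ for $i=j-2$, a total of $8(k-2)+9$, whence $8k+12$). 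Your recipe ``lower-degree summands $G_{i\le j-a-1}\bmod R\circ F$ contribute $\sum_i(r_i-H_i)$,'' with only the single top modification constrained, overcounts: already at $k=3$ it would assign $r_4-H_4=15-4=11$ to the degree-$4$ fibre of $\Gor(\D_3)$ instead of $8$, so it cannot reproduce $8k+12$. Moreover the claim that these constraints were ``already solved at $k=2$'' is not true: at $k=2$ there are no intermediate degrees at all (only the one constrained top term), so the base case gives no template, and the general-$k$ computation is genuinely new content. The same issue arises for $\Gor(\D_2)$ when $k\ge 3$. A smaller point: for $\D_3$ the paper does not use the $\mathbb{P}^2\times Z_{H(0)}$ fibration you propose, but parametrizes $F_j+F_{j-1}$ directly as powers of four general linear forms (an $11$-dimensional choice); that difference is cosmetic, but in either setup the intermediate-degree constraints must be identified and counted, and your proposal omits them.
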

 \begin{proof} Most of the proof is a straightforward extension of that of Theorem \ref{2.1thm}.  
 However, we verify the dimensions.\vskip 0.2cm\par\noindent
 a. {\it We consider the first two decompositions.} Here $\dim \mathcal D$ has contributions from
 \begin{enumerate}[(i)]
 \item $\mathbb P^2$, choice of $\langle x,y\rangle$);
 \item $|Q(0)|-\nu(Q(0))$, dimension of family of quotients of ${\kk}[x,y]$ having Hilbert function $H(Q(0))$ (Lemma \ref{ZTlem}B);
 \item dimension of modification for $Q(1)$, and $8=(10-4)+(6-4)$ for a general element $G_3+G_2$ in degree 3,2, outside of $A^\vee_{2,3}$ for $H=(1,3,4,4,\ldots)$;
 \item exotic summands (Lemma \ref{exoticlem} and Remark \ref{exoticrem}).
 \end{enumerate}
 For $\mathcal D_1$ this is $2+4(k+2)-4+6+2+(4k-1)=8k+13$. Here the exotics occur in degrees
$4$ to $j-1$ and have dimension $\sum_{i=3}^{j-2}H(Q(0)_i=|Q(0)|-9=4k+8-9=4k-1$ by Equation~ \eqref{exotic2eq}.\par
 For $\mathcal D_2$ we have $2+(3k+9)-3+3+8+H(0)_4=3k+22$ (the exotic contribution is $H(0)_4=3$).\vskip 0.2cm\par\noindent
 (b).{\it We now consider $\mathcal D_3$.}\par
 First choose $F_j+F_{j-1}$: here $F_j$ is the sum $L_1^{[j]}+L_2^{[j]}$ or $L_1^{[j-1]}L_2$, and
$F_5=L_3^{[j-1]}+L_4^{[j-1]}$, or $L_3^{[j-2]}\cdot L_4$, where each of $L_3,L_4\in S_1$ are linearly independent from $L_1,L_2$. As in the case $k=2$ by $\Pgl(2)$ action we may assume that  the linear forms are $X,Y,Z, X+Y+Z$, essentially choosing 4 points in $\mathbb P^2$, so an eight dimensional choice, but we care about the scale, so total of 11 dimensions for $F_j+F_{j-1}$. Now we add
elements $G_i \in \langle R_i, i\in [2,j-2]\rangle$, so $F=F_j+F_{j-1}+\sum_{i=5}^{j-2}G_i+(G_3+G_2).$  Here we must choose $G_{j-2},\ldots, G_5$ so they do not add to the Hilbert function: for example, a generic $G_4$ (relatively compressed modification) would add $(0,0,2,0)$ giving $H=(1,3,6,4,\ldots )$. Note that $V=I_2^\ast=\langle X^2,Y^2,Z^2,(X+Y+Z)^{[2]}\rangle^\perp=\langle (xy-xz,xy-yz)\rangle$. We need $I_2^\ast\circ G_4\subset\langle  X^2,Y^2,Z^2,(X+Y+Z)^{[2]}$, which we can test by $I_2^\ast\circ \langle I_2^\ast\circ G_4\rangle=0$, that is $G_4\in W^\perp,  W=V^2=\langle x^2(y-z)^2,xy(y-z)(x-z),y^2(x-z)^2\rangle$. Since $W$ is a vector space of dimension 3, $W^\perp\subset S_4$ has dimension $(15-3)=12$. This corresponds to $W$ defining a scheme of length 12 the order-two vanishing at 4 points of $\mathbb P^2$ $(1,0,0),(0,1,0),(0,0,1),(1,1,1)$, each local ideal being $\m_{p_i}^2$ of colength 3. But we must consider $G_i\mod \langle 
R_{j-i}\circ (X^j,Y^j,((z,xy)\cap R_{j-2})\circ (Z^{j-1}+(X+Y+Z)^{[j-1]}\rangle$. This is a 3-dimensional subspace of $W^\perp$ when $i=j-2=k+2$ but 4-dimensional when $4\le i\le j-3$ so we obtain that $G_4+\cdots +G_{k+2}$ is parametrized by an affine space bundle $\mathbb A^8+\cdots \mathbb A^8+\mathbb A^9$ of total dimension $8(k-2)+9$ over the choice of $F_j+F_{j-1}$.
 And $G_3,G_2$ may be chosen arbitrarily mod powers of $X,Y,Z, (X+Y+Z)$, so from vector spaces of dimension $(r_2-h_2)+(r_3-h_3)=(2+6)=8$.  By Equation \eqref{exotic2eq} there are no exotic terms.  We have $\dim \Gor(\D_3)=(11+8(k-2)+9+8)=8k+12$. 
 \end{proof}\par
 \begin{question}\label{concordques}[Concordance] Is there a codimension three Gorenstein sequence $H$, and two symmetric decomposition strata $\D_1,\D_2$ such that the partial orders on $\Gor(\D_1),\Gor(\D_2)\subset \Gor(H)$ from Jordan type and from semicontinuity for $\D_1,\D_2$ (Lemma \ref{trianglelem}) are concordant? That is, $\D_1<\D_2$ and likewise $P_1\le P_2$? Or, at least consistent - say $P_{\D_1}=P_{\D_2}$ for a generic linear form?  See Question \ref{posetDques}.
\end{question}

\subsection{Infinite families of $\Gor(H)$ having at least two irreducible components.}\label{infsec}
We give an infinite series of examples of codimension three families $\Gor(H)$ with several irreducible components - just below and Theorem \ref{infinitethm}, where $H=(1,3,4^k,3^s,2,1)$ with $k\ge 1,s\ge 2$. Let $R={\kk}[x,y,z]$. First, recall that a Gorenstein algebra $A=R/\Ann F$ of codimension three and socle degree $j$ is a \emph{connected sum} of a socle-degree $j$algebra $B={\kk}[x.,y]/\Ann G, G\in {\kk}[X,Y]$ of codimension 2, and $C={\kk}[z]/\Ann Z^k, k\le j$a of codimension $1$ if we can write $F=G+Z^k$. In that case, given $\ell\in \langle x,y\rangle$ the JT of $A$ with respect to $\ell+z$ is obtained by adjoining the part $(k-1)$ to $P_{B,\ell}$, if $k<j$, and by adjoining $(k-2)$ to $P_{B,\ell}$ if $k=j$.
\begin{example}[Lefschetz property contributing to component structure of $Gor (H)$]\label{2.4ex} We take $H(A)=(1,3,4,4,3,3,3,2,1)$ which permits two symmetric decompositions.
Let $F=X^{[8]}+Y^{[8]}+(Z)^{[7]}+(X+Y+Z)^{[4]}$ (a power sum), and  $\A=R/\Ann F$, of Hilbert function $H(\A)=(1,3,4,4,3,3,3,2,1)$ and symmetric decomposition $\D_1=\D_\A$,
$$\D_1=\left( H_\A(0)=(1,2,2,2,2,2,2,2,1), H_\A(1)=(0,1,1,1,1,1,1), H_\A(3)=(0,0,1,1)\right).$$  Here even $A^\ast$ is SL, as it is readily seen to have Jordan type $P_\ell, \ell=(x+y+z)$ satisfying $P_\ell=H(A)^\vee=(9,7,6,2)$. \par
Now let $G=X^{[8]}+Y^{[8]}+(X+Y)^{[8]}+Z^{[4]}$, and $B=R/\Ann G$. Then $\D_2=\mathcal{D}_B$ satisfies
$$\D_2=\left(H_B(0)=(1,2,3,3,3,3,3,2,1),H_B(4)=(0,1,1,1)\right),$$
of the same Hilbert function $H(B)=H(\A)$. However, since $G$ is a connected sum of components of different degrees, it follows that the Jordan type of $B$ is the concatenation of that for $Q(0)=R/\Ann G_8$, which is $(9,7,5)$ and that for $Q(4)$, which is $3$. So
$P_{B,\ell}=(9,7,5,3)<H_B^\vee$, and $B$ is not SL. \vskip 0.2cm\par\noindent
{\it Specialization: $\Gor(H)$ has two irreducible components}: Since no element of $\Gor(\D_2)$ is strong Lefschetz, no subfamily of $\Gor(\D_2)$ can specialize to a SL element of $\Gor(\D_1)$ (Lemma \ref{dominancelem}).  But no family of algebras of decomposition $\mathcal{D}_1$ can specialize to one of decomposition $\mathcal{D}_2$, since $H_0(B)_2>H_0(A)_2$ (Corollary~\ref{D0cor}).\par
Here the two-component structure does not follow simply from the semicontinuity of $\mathcal{D}$ (see Lemma \ref{trianglelem} and \cite[\S 4]{I2}) but uses the Jordan type in an essential way.
\vskip 0.2cm\par\noindent
{\it Parametrization  of algebras having decomposition $\D_1$.} This should be an irreducible family.
For the associated graded algebra,  we choose two variables to work in (a choice parametrized by $\mathbb P^2$); then for $F_8$ the sum of (eighth powers of) two linear forms, in the two variables, up to constant linear multiple $ (2+1)$, then a general linear form in three variables (with constant coefficient mattering) for $F_7$ and similarly for $F_8$, for a total of  $2+3+3+3=11$ dimensions. The dimension of $Gor(\mathcal{D}_A)$ may be somewhat larger.

\vskip 0.2cm\noindent
{\it Parametrization of algebras having decomposition $\D_2$:} It would appear that the family of AG algebras of decomposition
$\mathcal{D}_B$ should be irreducible, of dimension at least that of the family of stratified $B^\ast$ (stratified by sequences $C(0)\supset C(1)\supset \cdots $ having the same Hilbert functions of quotients $H(a)=H(Q(a))$).  Here
$Q(0)=R/\Ann G_8$, is parametrized by $G_8$ having Hilbert function $H_B(0)$.
For $G_8$ choose a subspace of two variables $\langle x,y\rangle $ from three, then take a sum of three eighth powers of linear forms, with coefficients, mod constant multiple of all: so $G_8$ is parametrized by a bundle of 
dimension $(3(1)+2)$ over $\mathbb P^2$, so has dimension 7. Then one needs to choose a single linear form in three variables for $G_4$, where the constant multiple matters, so $3$ more, so total dimension 10 for the family of associated graded algebras having decomposition $\mathcal{D}_B$.  But the family of AG algebras with decomposition $\mathcal{D}_B$ could be larger, as $G$ may have terms that are hidden. \vskip 0.2cm\noindent
\end{example}
The following result generalizes the Example \ref{2.4ex} and has essentially the same proof.
\begin{theorem}\label{infinitethm} Let  $H=(1,3,4^k,3^s,2,1)$ where $k\ge 1,s\ge 2$, of socle degree $j=k+s+3$ and $F=X^{[j]}+Y^{[j]}+(Z)^{[j-1]}+(X+Y+Z)^{[k+2]}$, determining an algebra $A=R/\Ann F$ of symmetric decomposition 
$$\mathcal D_1=\left(H(0)=(1,2^{j-1},1), H(1)=(0,1^{j-2}), H(j-k-3)=(0, 0,1^{k})\right),$$ of Hilbert function $H$.  The algebra $A$, hence general enough algebras of decomposition $\D_1$ are strong Lefschetz for $\ell=(x+y+z)$ -
they have JT $H^\vee=(j+1,j-1,j-2,k)$. \par
Now let $G=X^{[j]}+Y^{[j]}+(X+Y)^{[j]}+Z^{[k+2]}$, and $B=R/\Ann G$, where $\mathcal{D}_B$ has  symmetric decomposition
$$\D_2=\left(H_B(0)=(1,2,3^{j-3},2,1),H_B(j-k-2)=(0,1^{k+1})\right),$$
of the same Hilbert function $H(B)=H(\A)$. However, since $G$ is a connected sum of components of different degrees, it follows that the Jordan type of $B$ is the concatenation of that for $Q(0)=R/\Ann G_j$, which is $(j+1,j-1,j-3)$ and that for $Q(j-k-2)$, which is $(k+1)$. So
$P_{B,\ell}=(j+1,j-1,j-3,k+1)<H^\vee$. Thus, general elements of $\Gor(\D_2)$ are not weak Lefschetz, but not strong Lefschetz. \par 
A family of elements of  $\Gor(\D_1)$ cannot specialize to any element of  $\Gor(\D_2)$ (Corollary~\ref{D0cor}), and a family of elements of $\Gor(\D_2)$ cannot specialize to an element of $\Gor(\D_1)$ that is strong Lefschetz (Lemma \ref{dominancelem}). These strata comprise two irreducible components of $\Gor(H)$ (for certain $H$ there may be more, see Theorem \ref{2.1thm}).
\end{theorem}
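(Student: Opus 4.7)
The plan is to generalize Example \ref{2.4ex} (the case $k=2$, $s=3$) to all $k\ge 1$, $s\ge 2$ in a four-part strategy: (i) check $A$ has Hilbert function $H$ with decomposition $\D_1$ and is strong Lefschetz; (ii) check $B$ has Hilbert function $H$ with decomposition $\D_2$; (iii) compute the Jordan type of $B$; and (iv) combine the semicontinuity of symmetric decomposition (Corollary \ref{D0cor}) with the semicontinuity of Jordan type (Lemma \ref{dominancelem}) to separate the strata.

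For $A$, write $F = F_j + F_{j-1} + F_{k+2}^{\mathrm{ex}}$ with $F_j = X^{[j]}+Y^{[j]}$, $F_{j-1} = Z^{[j-1]}$, and $F_{k+2}^{\mathrm{ex}} = (X+Y+Z)^{[k+2]}$, which by Definition \ref{exoticdef} is an exotic summand. The partials of $F_j$ alone span $Q(0)\cong\kk\{x,y\}/(xy, x^j-y^j)$ with Hilbert function $(1, 2^{j-1}, 1)$; $Z^{[j-1]}$ introduces $z$ as a fresh partial of order $j-2$, and its iterated partials contribute $H(1) = (0, 1^{j-2})$; the exotic summand lands in $H(j-k-3) = (0, 0, 1^k)$. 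The decomposition assignment is confirmed by directly computing the Loewy intersections $\m^i \cap (0:\m^{j+1-a-i})$ for the three relevant $a$-layers, whose defining socle degrees $j$, $j-1$, $k+2$ separate them. For the Lefschetz property with $\ell = x+y+z$, exhibit four explicit strings of lengths $j+1, j-1, j-2, k$, one from each layer: $\{1,\ell,\ldots,\ell^j\}$, a length-$(j-1)$ string from $y-x$ inside the codim-two piece, a length-$(j-2)$ string from $z$, and a length-$k$ string from a representative of the exotic class, each checked for linear independence at every intermediate degree. Their concatenation equals the upper bound $H^\vee = (j+1, j-1, j-2, k)$ from Lemma \ref{SLupperbdlem}, so equality holds and $A$ is SL.

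For $B$, exploit the connected-sum form $G = G_j + Z^{[k+2]}$ with $G_j = X^{[j]}+Y^{[j]}+(X+Y)^{[j]}$ and $Z^{[k+2]}$ in disjoint variable sets. Then $\Ann G \supset (xz, yz, \Ann_{\kk\{x,y\}} G_j)$, and the quotient $B/(z) \cong \kk\{x,y\}/\Ann G_j$ is a graded codim-two AG algebra of Hilbert function $(1, 2, 3^{j-3}, 2, 1) = H_B(0)$; by Lemma \ref{cod2lem} it is strong Lefschetz for $x+y$ with Jordan type $(j+1, j-1, j-3)$. In the $z$-direction, $z^{k+2}\circ G = 1$ while $\m\cdot z^{k+2} = 0$ in $B$, so $z^{k+2}$ lies in the one-dimensional socle and is a nonzero scalar multiple of $(x+y)^j$; hence $\{z, z^2, \ldots, z^{k+1}\}$ is a pre-string of length $k+1$ whose next bead merges into the main string. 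By Lemma \ref{comparelem} together with a dimension count, this yields $P_B = (j+1, j-1, j-3, k+1) < H^\vee$. The decomposition $\D_2$ then follows from the same layered analysis, with $Z^{[k+2]}$ supplying $H_B(j-k-2) = (0, 1^{k+1})$.

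For reducibility, combine the two-pronged separation. At $i=2$, $\D_1(0)_2 = 2 < 3 = \D_2(0)_2$, so Corollary \ref{D0cor} yields $\overline{\Gor(\D_1)}\cap\Gor(\D_2) = \emptyset$. Conversely, any irreducible component $V$ of $\Gor(H)$ containing $A$ has generic Jordan type equal to $H^\vee$ (forced by Lemma \ref{dominancelem} together with Lemma \ref{SLupperbdlem}) and so admits a strong Lefschetz element generically; by the preceding $P_B$ computation and the semicontinuity of Jordan type, no irreducible component containing a generic element of $\Gor(\D_2)$ can achieve JT $H^\vee$, so $V$ cannot meet the irreducible component of $\Gor(H)$ containing $B$. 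This yields two distinct irreducible components, completing the proof. The main obstacle is the Jordan-string analysis for $A$: because the exotic summand $(X+Y+Z)^{[k+2]}$ couples the $F_{k+2}$-layer to the lower-degree partials of $F_j$ (Remark \ref{exoticbrem}), the four strings from the different layers must be shown linearly independent at every intermediate degree, with the length-$k$ exotic string requiring the most care.
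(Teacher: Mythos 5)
Your overall strategy is the one the paper uses: Theorem \ref{infinitethm} is proved there by generalizing Example \ref{2.4ex} --- verify the two symmetric decompositions, check that $A$ is strong Lefschetz for $\ell=x+y+z$, compute $P_{B,\ell}$ from the connected-sum structure of $G$, and then separate the strata by Corollary \ref{D0cor} in one direction and Lemma \ref{dominancelem} in the other. Your treatment of $A$, of the two decompositions, and of the direction $\overline{\Gor(\D_1)}\cap\Gor(\D_2)=\emptyset$ via $\D_1(0)_2=2<3=\D_2(0)_2$ matches the paper.

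There is, however, a genuine gap in your dominance-direction separation. You assert that ``by the preceding $P_B$ computation and the semicontinuity of Jordan type, no irreducible component containing a generic element of $\Gor(\D_2)$ can achieve JT $H^\vee$.'' Semicontinuity (Lemma \ref{dominancelem}) runs the other way: from $P_{B,\ell}<H^\vee$ for the single connected sum $B$ you may only conclude that the generic Jordan type of a component containing $B$ dominates $P_{B,\ell}$; it could still equal $H^\vee$. What the argument actually needs --- and what the paper asserts in Example \ref{2.4ex} (``no element of $\Gor(\D_2)$ is strong Lefschetz'') --- is a statement about all, or at least general, algebras with decomposition $\D_2$, so that a family inside $\Gor(\D_2)$ specializing to the SL algebra $A$ would force SL members of $\Gor(\D_2)$, a contradiction. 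That statement must be extracted from $\D_2$ itself, by the string analysis of Theorem \ref{2.1thm}(i.b) adapted to this case: a putative third string of length $j-2$ has a generator $c\in\m\setminus\m^2$ whose class lies in the two-dimensional $Q(0)_1$ spanned by the classes of $\ell$ and $b$, so $c=\alpha\ell+\beta b+c_1+c_2$ with $c_1\in\m^2$ and $c_2\in\m\cap(0:\m^{k+2})$ (using $Q(a)_1=0$ for $0<a<j-k-2$); since $j-3\ge k+2$ (here the hypothesis $s\ge 2$ enters) one gets $\ell^{j-3}c_2=0$ and $\ell^{j-3}c_1\in\m^{j-1}=\langle\ell^{j-1},\ell^j,\ell^{j-2}b\rangle$, so $\ell^{j-3}c$ is dependent on earlier beads, contradicting the pre-Jordan basis. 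Two smaller points: Lemma \ref{comparelem} and your exhibited pre-strings give only the lower bound $P_{B,\ell}\ge(j+1,j-1,j-3,k+1)$, so to pin down $P_{B,\ell}$ (or just to exclude SL for $B$) you need an upper-bound argument --- the paper uses the connected-sum concatenation fact recalled just before Example \ref{2.4ex}; and $(X+Y+Z)^{[k+2]}$ is not an exotic summand in the sense of Definition \ref{exoticdef}, since for $\D_1$ one has $n_1=3$, so all three variables are permitted in every degree below $j$ and $F$ is already in standard form.
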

 \begin{remark}  It is easy to set up families of codimension three Gorenstein sequences of higher Sperner number (maximum value of $H$)
 where similar arguments should work to show that $\Gor(H)$ has at least two irreducible components. For example, let $H=(1,3,5^k,4^s,3,2,1)$ with $k\ge 2$ of socle degree $j=k+s+4$ and length $5k+4s+10$, and note that $H^\vee=(k+s+5,k+s+3,k+s+2,k+s,k)$. Let
 $$\D_1=\left(H(0)=(1,2,3^{k+s+1},2,1), H(3)=(0,0,1^{k+s}), H(s+3)=(0,1^{k+1})\right),$$
 and note that $P_{\D_1}=(k+s+5,k+s+3,k+s+1,k+s,k+1)$. Let
 $$\D_2=\left(H(0)=(1,2,3,4^{k+s-1},3,2,1), H(s+3)=(0,1^{k+1},H(k+s)=(0,0,1)\right),$$
 and note that $P_{\D_2}=(k+s+5,k+s+3,k+s+1,k+s-1,k+1,1)$ which is less than $P_{\D_1}$, so again the expected Jordan types under dominance partial order are opposite to the symmetric decomposition order. 
 \end{remark}
\begin{question}\label{unboundedques} Can we get an unbounded number of irreducible components of $\Gor(T)$ as a function of socle degree $j$, in the limit as $j\to \infty$? What is the function $b(j,r)=$ max \# components of 
an AG algebra of socle degree $j$ and embedding dimension $r$? For $r=2, b(j,2)=1$ for all $j$ (since $T$ determines $\D$ and $\Gor(T)$ is irreducible in codimension two). Can we show it goes to $\infty$ for $r=3$ and $j$ large?
\end{question}
\begin{question}\label{fibreques} The map $A\to A^\ast$ for algebras $A\in \Gor(T)$ induces an algebraic map of schemes $\pi:\Gor(T)\to G(T)$ parametrizing graded algebras of Hilbert function $T$. Can we conclude $Gor(\mathcal{D})$ is irreducible if the image $\pi (Gor(\D))\subset G(T)$ is irreducible and there is a sufficiently strong conditions on the fibres?
\end{question}
\subsection{Realizability.}\label{realizablesec} 
We assume in this section that $A$ is a local Artinian Gorenstein algebra of socle degree $j$, so $A=R/I$ where
$R$ is a standard graded regular local ring. Recall that the Hilbert function $H(A)$ for such $A$ satisfies three conditions:\begin{enumerate}[(i).]
\item  $H=\sum _{a=0}^{j-2} H(a)$, where each $H(a)$ is symmetric about $(j-a)/2$.
\item  For each $a, 0\le a\le j-2$ we have that $\sum_{u=0}^a H(u)$ is an O-sequence (possible for a Hilbert function of an Artinian algebra).
\item The symmetric sequence $H(0)$ occurs as the Hilbert function of a graded Gorenstein algebra.
\end{enumerate}
Here the first and third conditions are from Lemma \ref{symmdecomplem} and Equation \eqref{strataeq}; the second condition results from  $H(A^\ast/\mathcal C(a+1))=\sum_{u=0}^a H(u)$.
It is natural to ask if there are further conditions on the symmetric decompositions $\mathcal{D}(A)$ so on the component Hilbert functions $H(A)$ for AG algebras. If it actually occurs as the complete Hilbert function decomposition for an AG algebra, we call $\mathcal{D}(A)$ \emph{realizable}.  S. Masuti and M. Rossi showed that all socle degree four potential decomposition sequences $\mathcal{D}$ satisfying the three conditions are realizable \cite[\S 3.6,3.7]{MR}.\par 
In \cite[p. 6]{IM} we introduced the concept of non-ubiquity: a realizable decomposition with $H(a)\not=0$ is non-ubiquitous if there is some $b<a$ such that
$\mathcal{D}_{\le b}=\left( H(0), H(1),\ldots, H(b)\right)$  is not realizable. We could only show partial non-ubiquity, where we imposed an additional condition on $B$ \cite[\S 8]{IM}. We here exhibit non-realizable sequences  in codimension three, for socle degree $j\ge 8$, satisfying the three conditions. A realizable complete intersection or Gorenstein algebra $A$ that is non-ubiquitous cannot arise from our examples. We were led to our non-realizability results in response to a question asked us  by S. Masuti, and another about the - unknown - Hilbert functions of codimension three complete intersections, by M.~Rossi at a talk in the IIT Bombay Virtual Commutative Algebra seminar of December 29, 2020. Subsequently to our examples, S. Masuti has extended the all-decompositions-realizable result of \cite{MR} to obtain partial results in socle degree $5$ \cite{Mas}. We now give a general construction
of some non-realizable candidate symmetric decompositions.
\begin{theorem}\label{non-realizablethm} Let $R={\mathsf{k}}[x_1,\ldots, x_r], S={\mathsf{k}}_{DP}[X_1,\ldots X_r$, $R'=R[w], S'=S[W]$. Let $B$ be a graded Gorenstein algebra of Hilbert function $H(0)$ of socle degree $j$, and assume that $B=R/I_B$ where $ I_B=\Ann f\cap R$ for an element $f\in S_j$. Assume further that $j\ge 5$ and that $I_B$ is generated in degrees less or equal $j-3$.\footnote{In effect, $j\ge 8$}. Let $H(1)=(0,1,0,\ldots, 0,1_{j-2},0)$, and assume that $H=H(0)+H(1)$ satisfies the Macaulay conditions.  Then $\mathcal{D}=\left(H(0),H(1)\right)$ is not realizable as the (complete) decomposition sequence of an AG algebra quotient $A$ of $R'$ having $Q(0)$ isomorphic to $B$. Nor can it occur as part of a decomposition sequence $\mathcal{D}$ for an AG quotient of $R$ having $Q(0)$ isomorphic to $B$.
\end{theorem}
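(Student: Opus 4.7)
The plan is to argue by contradiction. Assume $A = R'/\Ann F \in \Gor(\D)$ has symmetric decomposition $\D = (H(0), H(1))$ with $Q(0) \cong B$. By Lemma \ref{Macaulalyduallem} applied to $Q(0)$, we have $Q(0) = R'/\Ann F_j \cong B$, so after normalization $F_j = f \in S$ (not involving $W$). Since $B$ has embedding dimension $r$, $(I_B)_1 = 0$. Any linear form $\tilde w$ representing the generator of $Q(1)_1$ lies in $\m \cap (0:\m^{j-1})$, giving $\tilde w \circ F \in S'_{\le j-2}$ and hence $\tilde w \circ f = 0$; writing $\tilde w = \sum c_i x_i + cw$ forces $\sum c_i x_i \in (I_B)_1 = 0$, so WLOG $\tilde w = w$. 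Set $g := w \circ F \in S'_{\le j-2}$ with top-degree part $g_{j-2} \neq 0$ (otherwise $w \in (0:\m^{j-2})$ would kill its class in $Q(1)_1$).

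Next I would exploit the vanishing middle of $H(1)$. For $k \in [1, j-4]$ and $u \in R_k$, the identity $w \cdot \m^{j-1} = 0$ yields $uw \cdot \m^{j-k-1} \subseteq u \cdot (w \m^{j-1}) = 0$, so $uw \in \m^{k+1} \cap (0:\m^{j-k-1})$ represents a class in $C(1)_{k+1}$. Since $H(1)_{k+1} = 0$, this class vanishes, i.e., $uw \in \m^{k+2}$. Choosing $v \in R'_{\ge k+2}$ with $uw - v \in \Ann F$ and comparing top-degree components of $uw \circ F = u \circ g$ and $v \circ F$ (the latter coming from the $R_{k+2}$-part of $v$ acting on $f$) gives $u \circ g_{j-2} \in R_{k+2} \circ f$. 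Specializing $k=1$, $u = x_i$ yields $\psi_i \in R_3$ with $x_i \circ g_{j-2} = \psi_i \circ f$; commutativity $x_j x_i \circ g_{j-2} = x_i x_j \circ g_{j-2}$ forces $x_j \psi_i - x_i \psi_j \in I_B$ for all $i, j$, so $([\psi_i])_i \in B_3^r$ is a cocycle in the Koszul cochain complex $\Hom_R(K_\bullet(\underline{x}; R), B)$ representing a class in $\Ext^1_R(\kk, B)_2$.

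The technical heart of the argument is the vanishing of this class. A direct Koszul computation yields the duality $\Ext^p_R(\kk, B)_d \cong \mathrm{Tor}^R_{r-p}(\kk, B)_{d+r-p}$, so $\Ext^1_R(\kk, B)_2 \cong \mathrm{Tor}^R_{r-1}(\kk, B)_{r+1}$. By Gorenstein self-duality of the minimal free resolution of $B$, the Betti numbers satisfy $\beta_{p,d} = \beta_{r-p,\, j+r-d}$; in particular $\beta_{r-1,\, r+1} = \beta_{1,\, j-1}$. The hypothesis that $I_B$ is generated in degrees $\le j-3$ gives $\beta_{1,\, j-1} = 0$, so the obstruction vanishes. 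Hence there exists $\varphi \in R_2$ with $x_i \varphi - \psi_i \in I_B$ for every $i$; then $x_i \circ (g_{j-2} - \varphi \circ f) = 0$ for all $i$, and since $j - 2 \ge 3$ this forces $g_{j-2} = \varphi \circ f$.

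The contradiction then follows from $\ell := w - \varphi$. As $\varphi \in R_2 \subset \m^2$ and $w$ is a nonzero class in $A^\ast_1$, $\ell \notin \m_A^2$. On the other hand, $\ell \circ F = g - \varphi \circ F$ has its degree-$(j-2)$ part $g_{j-2} - \varphi \circ f = 0$, so $\ell \circ F \in S'_{\le j-3}$, yielding $\ell \in \m_A \cap (0:\m_A^{j-2})$. Since $\D = (H(0), H(1))$ has no $H(2)$ component, $H(2)_1 = 0$, i.e., $\m \cap (0:\m^{j-2}) = \m^2 \cap (0:\m^{j-2})$, so $\ell \in \m_A^2$, a contradiction. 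The main obstacle in executing this plan is the $\Ext$-$\mathrm{Tor}$ duality shift calculation in the third paragraph; the hypothesis $j \ge 5$ ensures the vanishing range $[2, j-3]$ of $H(1)$ is nonempty, while the footnote's ``$j \ge 8$ in effect'' reflects the codimension-two case, where complete-intersection generator degrees $d_1 + d_2 = j + 2$ with $d_i \le j-3$ force $j \ge 8$.
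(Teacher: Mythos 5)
Your argument takes a genuinely different route from the paper's (which works on the dual generator, citing \cite[Lemma 1.40]{IM} to write $f_{j-1}=WG$ with $G\in S_{j-2}$ and then derives the contradiction $I_{j-2}\circ G=0$ from $I_{j-2}\subset (I_{\le j-3})$), and most of your steps are sound, but there is a genuine gap at the step ``$x_i\circ(g_{j-2}-\varphi\circ f)=0$ for all $i$ \dots forces $g_{j-2}=\varphi\circ f$.'' In $S'$ an element of degree $j-2$ annihilated by all of $x_1,\dots,x_r$ need not vanish: it can be a nonzero multiple of the pure divided power $W^{[j-2]}$. All of your constraints are contractions by elements of $R$ only, so they never see such a summand (equivalently, a $W$-nonlinear term $cW^{[j-1]}$ in $F_{j-1}$); your argument therefore only yields $g_{j-2}=\varphi\circ f+cW^{[j-2]}$. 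If $c\neq0$, the degree-$(j-2)$ part of $\ell\circ F$ is $cW^{[j-2]}\neq0$, so $\ell\notin(0:\m^{j-2})$ and your final contradiction evaporates. This is exactly the point the paper disposes of by invoking the standard-form result \cite[Lemma 1.40]{IM} ($f_{j-1}$ linear in $W$). The gap is repairable inside your own framework: if $c\neq0$ then the degree-$(j-3)$ part of $w^2\circ F=w\circ g$ is $cW^{[j-3]}\neq0$, so $w^2\in\m^2\cap(0:\m^{j-2})$; since $H(1)_2=0$ (here $2\le j-3$) this forces $w^2\in\m^3$, and the same top-degree comparison you used for $uw$ gives $cW^{[j-3]}\in R_3\circ f\subset S$, whence $c=0$.

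Two further points. Your $\Ext$--$\mathrm{Tor}$ bookkeeping is off by one: with $K_p\cong R(-p)^{\binom{r}{p}}$ one gets $\Ext^p_R(\kk,B)_d\cong \mathrm{Tor}^R_{r-p}(\kk,B)_{d+r}$, so the obstruction lies in a space of dimension $\beta_{r-1,r+2}=\beta_{1,j-2}$, not $\beta_{1,j-1}$; for instance $B=\kk[x,y]/(x^4,y^4)$, $j=6$, carries the nonzero class $(\psi_1,\psi_2)=(y^3,x^3)$ even though $I_B$ has no generator in degree $j-1$. Under the stated hypothesis (generation in degrees $\le j-3$) both Betti numbers vanish, so your proof of the theorem survives, but your formula would wrongly suggest the statement holds when $I_B$ has a generator in degree $j-2$ -- precisely the case the hypothesis is designed to exclude and the case where the paper's obstruction ($I_{j-2}\circ G=0$) can fail. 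Also, at the end you need $H(a)_1=0$ for \emph{all} $a\ge2$ (i.e.\ $C(2)_1=0$), not only $H(2)_1=0$; this is available since the decomposition is assumed complete, but note that for the same reason your argument (like its use of $H(1)_{k+1}=\dim C(1)_{k+1}$ in the middle-vanishing step) does not address the theorem's final assertion that $\bigl(H(0),H(1)\bigr)$ cannot occur even as the initial part of a longer decomposition. Modulo these repairs, your Koszul/$\Ext^1$ obstruction with Gorenstein Betti symmetry is an attractive homological repackaging of the paper's direct computation with the dual generator.
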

\begin{proof}
Assume by way of contradiction that $A=\Ann F, F=f+f_{j-1}+\cdots $ has decomposition $\mathcal{D}$. By \cite[Lemma 1.40]{IM}, we have $f_{j-1}$ is linear in $W$, so $f_{j-1}=WG$ where $G\in S_{j-2}$; also $G\not=0$, since $w\circ f_{j-1}=G\mod S'_{\le j-2}$ must not be in $R_1\circ f$, in order to achieve $H(1)_{j-2}=1$. Also, we have $I_u\circ G=0$ for $u\le j-3$, since otherwise in the module $R\circ F$ we would have an extra element $W(h\circ G)$ with $h\circ G\in R_1$, leading to a Hilbert function with $H_2\ge H(0)_2+1$, contradicting the assumption that $\mathcal{D}$ is the complete symmetric decomposition of $H(A)$.\par
Note that $w\circ F=G+w\circ (f_{j-2}+\cdot  +f_2$. We must have $I_{j-2}\circ G\not=0$, in order to obtain $W\in R\circ F$, (the element $W$ must arise from $R/I\circ G$ as $H(1)_1=1$ (the element does not arise from $R'\circ u_u\le u-2$, which would be in a higher $H(a), a>1$).  Since $I_{j-2}\subset (I_{j-3})$ (the ideal) by our assumption on the generators, we have that $I_{j-2} \circ G=0$, a contradiction.  We conclude that $\mathcal{D}$ is not realizable as the decomposition sequence of an AG algebra $A$ having $Q(0)\cong B$.
\end{proof} \par
The following corollary is immediate.
\begin{corollary}\label{nr1cor} Let $H(0)$ be a symmetric Gorenstein sequence of socle degree $j$ such that all graded AG algebras of Hilbert function $H(0)$ are defined by ideals that are generated in degree no more than $j-3$.
Let $H(1)=(0,1,0,\ldots, 0, 1_{j-2},0)$. Then the decomposition $\mathcal{D}=\left(H(0),H(1)\right)$ is not realizable
as the complete symmetric decomposition of an AG algebra, nor can it occur as the first part of a symmetric decomposition $\mathcal{D}$ of some Gorenstein sequence $H$. \end{corollary}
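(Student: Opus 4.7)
The plan is to reduce the corollary directly to Theorem \ref{non-realizablethm} by showing that its hypotheses are automatically satisfied for any graded Gorenstein algebra $B$ of Hilbert function $H(0)$ that might arise as $Q_A(0)$ of a putative realization $A$.

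Suppose for contradiction that $\mathcal{D}=\bigl(H(0),H(1)\bigr)$ is realizable, either as the complete symmetric decomposition of some AG algebra $A$, or as the initial segment $\bigl(H_A(0),H_A(1)\bigr)$ of a longer decomposition $\mathcal{D}(A)$. Since $H(1)_1=1$, the embedding dimension of $A$ is $r+1$, where $r=H(0)_1$, so $A$ is a quotient of the regular local ring $R'=R\{w\}$ with $R=\mathsf{k}\{x_1,\ldots,x_r\}$. By Lemma~\ref{symmdecomplem}, $Q_A(0)$ is a standard graded Artinian Gorenstein algebra with Hilbert function $H(0)$, and by Macaulay duality (Lemma \ref{Macaulalyduallem}) we may write $Q_A(0)=R/\Ann f$ for some $f\in S_j$, $S=\mathsf{k}_{DP}[X_1,\ldots,X_r]$.

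Set $B=Q_A(0)$. By the hypothesis of the corollary — that every graded AG algebra of Hilbert function $H(0)$ is defined by an ideal generated in degrees at most $j-3$ — the ideal $I_B=\Ann f$ satisfies this generator-degree bound. The remaining hypotheses of Theorem \ref{non-realizablethm} are verified by inspection: $B$ is graded Gorenstein of socle degree $j\ge 5$ (indeed $j\ge 8$ is forced by having $I_B$ generated in degrees $\le j-3$ together with the constraints on $H(1)$), and $H=H(0)+H(1)$ is a Macaulay sequence by hypothesis. Theorem \ref{non-realizablethm} then states that $\mathcal{D}$ cannot be realized as the complete decomposition sequence of an AG algebra having $Q(0)\cong B$, nor as the beginning of a longer such decomposition. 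This contradicts the choice of $A$, proving both assertions of the corollary.

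The corollary is essentially a packaging result: the only substantive content — the obstruction coming from the fact that $w\circ F$ must supply the bead $W$ of $H(1)_1$, which forces $I_{B,j-2}\circ G\neq 0$ for some nonzero $G\in S_{j-2}$, contradicting the degree bound on the generators — lives inside Theorem \ref{non-realizablethm}. The only verification needed here is that the generator-degree hypothesis on $I_B$ is enforced uniformly over all graded AG algebras of Hilbert function $H(0)$, which is exactly what the corollary assumes.
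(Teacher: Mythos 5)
Your proposal is correct and matches the paper's intent: the paper offers no argument beyond declaring the corollary ``immediate'' from Theorem \ref{non-realizablethm}, and your reduction --- taking $B=Q_A(0)$ for a putative realization $A$ and invoking the uniform generator-degree hypothesis on all graded AG algebras of Hilbert function $H(0)$ --- is exactly that intended deduction. The minor imprecisions (e.g.\ asserting embedding dimension $r+1$ in the case where $\mathcal{D}$ is only the initial part of a longer decomposition, and attributing the Macaulay condition on $H$ to ``hypothesis'' rather than to realizability) are at the same level of detail the paper itself glosses over and do not constitute a gap.
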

Recall that the compressed Gorenstein Hilbert function of a socle degree $j$ quotient of $R$ has Hilbert function
$H$ satisfying $H_i=\min \{\dim_{\mathsf{k}} R_i, R_{j-1}\}$, for $0\le i\le j$.
\begin{corollary}\label{nr2cor} Let $R={\mathsf{k}}[x,y], R'={\mathsf{k}}[x,y,w]$, and suppose that either\begin{enumerate}[(i).]
\item $H(0)$ is the compressed Hilbert function for $R$ of socle degree at least $8$. OR
\item  $H(0)$ has generator degrees  $(a,b), a+b=j+2, 5\le a\le b$. 
\end{enumerate}
Then the decomposition $\mathcal{D}=\left(H(0), H(1)\right), H(1)=(0,1,0,\ldots, 0, 1_{j-2},0)$ is not realizable, nor can it be the first part of a realizable decomposition.
\end{corollary}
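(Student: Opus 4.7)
The plan is to reduce both parts of the corollary to Corollary \ref{nr1cor}: it suffices to check, in each case, that every graded AG quotient $B=R/I_B$ with $R=\mathsf{k}[x,y]$ of Hilbert function $H(0)$ has $I_B$ generated in degrees at most $j-3$. First I would invoke the well-known fact that a graded AG algebra of codimension two is automatically a complete intersection, so $I_B$ has exactly two minimal generators, of degrees $a\le b$ say, and the classical relation $a+b=j+2$ (already encoded in Lemma \ref{ZTlem}) determines the pair $(a,b)$ uniquely from $H(0)$. Once this is established, the verification in each case amounts to a short arithmetic check on generator degrees.

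For case (ii) the verification is essentially immediate: the hypothesis $5\le a\le b$ combined with $a+b=j+2$ gives $b=j+2-a\le j-3$, so both generators lie in degrees at most $j-3$ and Corollary \ref{nr1cor} applies directly.

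For case (i) I would compute the generator degrees $(a,b)$ of the complete intersection realizing the compressed Hilbert function $H(0)_i=\min(i+1,j-i+1)$, splitting into parities of $j$: when $j=2m$ one has $a=b=m+1$, and when $j=2m+1$ one has $(a,b)=(m+1,m+2)$. A one-line arithmetic check then shows $b\le j-3$ in both parities precisely when $j\ge 8$, so the hypothesis of Corollary \ref{nr1cor} is again met.

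The main (and rather minor) obstacle I anticipate is only the bookkeeping of the two parities in (i) and a clean citation for the generator-degree formula $a+b=j+2$ in the codimension two Gorenstein case; with these in hand, Corollary \ref{nr1cor} delivers simultaneously the non-realizability of $\mathcal{D}=(H(0),H(1))$ as a complete symmetric decomposition and the impossibility of $(H(0),H(1))$ occurring as the initial segment of any longer realizable symmetric decomposition $\mathcal{D}'$.
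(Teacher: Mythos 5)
Your proposal is correct and follows essentially the same route as the paper: both reduce to Corollary \ref{nr1cor} via the fact that a codimension-two graded AG algebra is a complete intersection, whose generator degrees $(a,b)$ with $a+b=j+2$ are determined by $H(0)$, so that $5\le a\le b$ forces $b\le j-3$. The paper merely notes that (i) is a special case of (ii) rather than redoing the parity arithmetic for the compressed case, but your explicit check (which indeed yields $b\le j-3$ for all $j\ge 8$) is the same argument spelled out.
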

Here (i) is a special case of (ii).\par
\begin{proof} Every graded AG algebra of codimension two is a complete intersection (which is true for all AG algebras of codimension two), so this follows from Corollary \ref{nr1cor}.
\end{proof}
\begin{corollary}\label{nr3cor}
Let $R={\mathsf{k}}[x,y,z], R'={\mathsf{k}}[x,y,z,w]$ and let $B=R/J, J=\Ann f, f\in S_j$.
\begin{enumerate}[(i).] 
\item Let $B=R/J$ be a graded complete intersection for which $J$ has generator degrees
$3\le a\le b\le c$ (then the socle degree $j=(a+b+c)-3$). OR
\item Let $B=R/J$ be any codimension three graded AG algebra of order $\nu(J)\ge 5$.
\end{enumerate} 
Then the decomposition $\mathcal{D}=\left(H(0)=H(B), H(1)\right), H(1)=(0,1,0,\ldots, 0, 1_{j-2},0)$ is not realizable as the complete decomposition of an AG algebra $A$ quotient of $R'$ having $Q(0)$ congruent to $B$. In case (ii), $\mathcal{D}$ is not realizable as the initial part of a symmetric decomposition of any such Gorenstein sequence.
\end{corollary}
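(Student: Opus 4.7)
The plan is to reduce both cases to Theorem~\ref{non-realizablethm}. In each case $B=R/J$ is a graded codimension three Artinian Gorenstein algebra, so by Lemma~\ref{Macaulalyduallem} we may write $B=R/\Ann f$ for a homogeneous $f\in S_j$, and the hypothesis $I_B=\Ann f\cap R$ is automatic. What remains is to verify the two numerical hypotheses of Theorem~\ref{non-realizablethm}: that $j\ge 5$ and that $I_B$ is generated in degrees at most $j-3$. The two conclusions of the theorem---non-realizability of $\mathcal{D}$ as a complete decomposition of an AG quotient of $R'$, and as part of any longer decomposition of an AG quotient of $R$---then yield the conclusions asserted in the corollary.

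For case (i), a complete intersection with generator degrees $3\le a\le b\le c$, the socle degree is $j=a+b+c-3\ge 6\ge 5$, and the maximal generator degree $c$ satisfies $c\le j-3 = a+b+c-6$ if and only if $a+b\ge 6$, which holds since $a,b\ge 3$. Hence Theorem~\ref{non-realizablethm} applies directly.

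For case (ii), I would invoke the Buchsbaum--Eisenbud structure theorem \cite{BuEi}: the minimal graded free resolution of $B$ takes the form
\[
0 \to R(-j-3) \to \bigoplus_{i=1}^n R(-d_i) \xrightarrow{\,M\,} \bigoplus_{i=1}^n R(-e_i) \to R \to B \to 0,
\]
where $M$ is an $n\times n$ skew-symmetric matrix whose order-$(n-1)$ Pfaffians give the minimal generators $f_1,\ldots,f_n$ of $J$, with $d_i+e_i=j+3$ and $\deg m_{ik}=j+3-e_i-e_k$. I claim that no row of $M$ is identically zero: otherwise, by skew-symmetry, the corresponding column would also vanish, and for each $k\ne i$ the $(n-1)\times(n-1)$ submatrix obtained by deleting row $k$ and column $k$ would still contain a zero row, forcing $f_k=0$; then $J=(f_i)$ would be principal, contradicting $\mathrm{ht}(J)=3$. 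Hence for each $i$ there exists $k$ with $m_{ik}\ne 0$, and minimality of the resolution forces $\deg m_{ik}>0$, i.e., $e_i<j+3-e_k\le j+3-\nu(J)\le j-2$, giving $e_i\le j-3$. Moreover, symmetry of $H(B)$ combined with $H(B)_i=\binom{i+2}{2}$ for $i<\nu(J)$ forces $j\ge 2\nu(J)-2\ge 8\ge 5$, so Theorem~\ref{non-realizablethm} applies and yields both statements of case (ii).

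The main obstacle is the structural argument in case (ii) that no row of the Buchsbaum--Eisenbud skew-symmetric matrix can be identically zero; given this, the bound $e_i\le j-3$ is a direct consequence of the minimality of the resolution combined with the hypothesis $\nu(J)\ge 5$. Case (i) reduces to an arithmetic verification using the standard degree formula for codimension three complete intersections.
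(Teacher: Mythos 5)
Your proposal is correct, and case (i) is essentially the paper's argument (the arithmetic $c\le a+b+c-6=j-3$ and $j\ge 6$, then the non-realizability statement). For case (ii), however, you take a genuinely different route to the key input, the bound that every generator of $J$ has degree at most $j-3$: the paper simply cites the Conca--Valla result that the maximal generator degree of a codimension three graded AG algebra is at most $j+2-\nu(H)$, whereas you rederive this bound from the Buchsbaum--Eisenbud Pfaffian resolution -- your ``no zero row'' argument (a zero row/column would force all other submaximal Pfaffians to vanish, making $J$ principal, contradicting height three) combined with minimality gives $e_i+e_k\le j+2$, hence $e_i\le j+2-\nu(J)\le j-3$; your verification that $j\ge 2\nu(J)-2\ge 8$ from symmetry of $H(B)$ is also sound. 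This is exactly the alternative derivation the paper alludes to when it remarks that the bound ``could also be derived'' from Diesel's analysis of generator-relation degrees, which itself rests on Buchsbaum--Eisenbud; your version is self-contained, at the cost of reproving a known Hilbert-function bound, while the paper's citation buys brevity and makes explicit that the bound depends only on $H(B)$. That last point is the one small refinement you should add: the final assertion of case (ii) (non-realizability as the \emph{initial} part of a decomposition of ``any such Gorenstein sequence'') is intended, as in Corollary \ref{nr1cor}, to apply to any AG algebra whose $Q(0)$ merely has Hilbert function $H(B)$, not only $Q(0)\cong B$, whereas Theorem \ref{non-realizablethm} fixes $Q(0)\cong B$. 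Your structural argument does deliver this stronger statement, because $\nu(J)$ is determined by the Hilbert function, so every graded AG algebra with $H(B')=H(B)$ also has order at least $5$ and hence generators in degrees at most $j-3$; you should say this explicitly and conclude via Corollary \ref{nr1cor} (or by applying the theorem to every possible $Q(0)$) rather than attributing ``both statements'' directly to the theorem as stated for the single algebra $B$.
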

\begin{proof} The first statement (i) is immediate from the formula $a+b+c=j+3$ and Corollary~\ref{nr1cor}.
The second (ii) uses a result of A. Conca and G. Valla \cite{CV} that specifies the maximum generator sequence of a graded AG algebra of codimension three of Hilbert function $H(0)$ in terms of the second difference $\Delta^2(H(0))$: in particular they show that the maximum degree of a generator of a graded codimension three $B$ having Hilbert function $H$ is bounded above by $j+2-\nu(H)$.   See also \cite[Theorem 5.25, equation 5.3.3]{IK}; this upper bound could also be derived from S. Diesel's analysis of the poset of generator-relation sequences for AG ideals $J$ determining such $B$ of Hilbert function $H$ \cite{Di}).
\end{proof}\par
We assume our complete intersections $B=R/J$ have embedding dimension that of the ring $R$ we are working in, so  $J$ is generated in degree at least two.
\begin{corollary} Suppose that $B$ is one of the following graded CI's.\begin{enumerate}[(i).]
\item A complete intersection of codimension four, have generator degrees at least $(2,2,3,3)$.
\item A complete intersection of codimension at least five.
\end{enumerate}
Then there is no quotient of $R'=R[w]$ having $Q(0)=B$, and satisfying $H(0)=H(B), H(1)=(0,1,0,\cdots ,0, 1_{j-2},0)$, and any $H(a), a\ge 2$.
\end{corollary}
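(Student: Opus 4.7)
My plan is to apply Theorem~\ref{non-realizablethm} directly, so the work reduces to checking the hypothesis that $I_B$ is generated in degrees at most $j-3$ (together with $j\ge 5$) for each of the two families of complete intersections described in (i) and (ii). Once that hypothesis is verified, Theorem~\ref{non-realizablethm} will supply exactly the desired conclusion: no AG quotient of $R'=R[w]$ with $Q(0)\cong B$ can realize $H(0)=H(B)$ together with $H(1)=(0,1,0,\ldots,0,1_{j-2},0)$ and arbitrary further $H(a)$ for $a\ge 2$, since the theorem's statement rules out both complete realization and occurrence as an initial segment of a decomposition.

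The numerics are clean. Let $B=R/J$ be a graded complete intersection of codimension $r$ with generator degrees $d_1\le d_2\le\cdots\le d_r$, where $d_i\ge 2$ for all $i$ by the standing embedding-dimension assumption. The socle degree equals $j=\sum_{i=1}^r d_i - r$, and the largest degree of a minimal generator of $J$ is $d_r$. Thus the hypothesis $d_r\le j-3$ of Theorem~\ref{non-realizablethm} is equivalent to the inequality
\begin{equation*}
d_1+d_2+\cdots+d_{r-1}\ \ge\ r+3.
\end{equation*}
I would verify this inequality (along with $j\ge 5$) in both cases.

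In case (i), $r=4$ and $(d_1,d_2,d_3,d_4)\ge (2,2,3,3)$ componentwise, so $d_1+d_2+d_3\ge 2+2+3=7=r+3$, and $j=d_1+d_2+d_3+d_4-4\ge 6$. In case (ii), $r\ge 5$ and each $d_i\ge 2$, so
\begin{equation*}
d_1+\cdots+d_{r-1}\ \ge\ 2(r-1)\ =\ 2r-2\ \ge\ r+3
\end{equation*}
precisely because $r\ge 5$, while $j\ge 2r-r=r\ge 5$. Both hypotheses of Theorem~\ref{non-realizablethm} are therefore satisfied, and the result follows.

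I do not foresee a genuine obstacle: the only delicate point is the bookkeeping on the sharp edge of case (ii) (the bound $2r-2\ge r+3$ is tight exactly at $r=5$, which is why the statement reads ``codimension at least five'') and verifying that the minimum configuration $(2,2,3,3)$ in case (i) already forces the key inequality to hold with equality. After that, the argument is a direct invocation of the previous theorem.
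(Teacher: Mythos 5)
Your argument is correct and is exactly the route the paper intends: the corollary is left as an immediate consequence of Theorem~\ref{non-realizablethm}, using the complete-intersection numerics $j=\sum_i d_i-r$ so that the top generator degree $d_r\le j-3$ amounts to $d_1+\cdots+d_{r-1}\ge r+3$, which your case checks for $(2,2,3,3)$ in codimension four and for all $d_i\ge2$ in codimension at least five (together with $j\ge5$) verify, just as in the preceding Corollaries~\ref{nr2cor} and \ref{nr3cor}.
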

We cannot find a non-ubiquitous example based on these results. \par
\subsection{Further questions.}\label{probsec}
\begin{question} The Jordan type of a direct sum is certainly the concatenation of the Jordan types of the summands. Do we have inequalities for Jordan types for the modules in an exact sequence?   
Also, given the Jordan type for quotients in a filtration of a graded algebra as $\A^\ast$ what can we say about $P_{\A^\ast,\ell}$?  For example, given each, $ P_{Q(a)}$ for $\ell$ what more can we say abou $P_{\A^\ast}$
(see Lemma \ref{comparelem} for a start).\par
A free extension is a special case of an exact sequence of modules, T. Harima and J.~Watanabe have shown that a free extension $C$ of a strong Lefschetz algebras $A,B$ ($C$ has fibre $B$ over $A$, is itself strong Lefschetz, provided that the Hilbert functions $H(A)$ and $H(B)$ are symmetric and $\cha \F=0$. The \cite[Theorem 2.6]{IMM2} gives a new proof of that result provided $\cha \F=0$ or is greater than the sum $j_A+j_B$ of the socle degrees of $A,B$. The proof there uses Clebsch-Gordan, and assumes both $A,B$ are graded.
\end{question}\noindent

\begin{question} 
Can we use the D. Buchsbaum and D. Eisenbud Pfaffian structure theorem for codimension three Gorenstein algebras to study deformations of AG local algebras? Or to obtain the dimension of families? Or is there a poset of generator orders, as in the graded case \cite{Di},\cite[Theorem 5.25v]{IK}? The Pfaffian structure theorem  has the following consequence, well known for $A $ AG graded of codimension three. Here $R$ is standard-graded.
Recall that the order $\nu(I)$ of an ideal $I$ of $R$ satisfies  $I\subset \m^\nu, I \nsubseteq \m^{\nu+1}$.
\begin{lemma} Let $A=R/I$ be a codimension three local Artinian Gorenstein algebra. Then the number of generators of $I$ is at most $2\nu(I)+1$, where $\nu(I)$ is the order of $I$.
\end{lemma}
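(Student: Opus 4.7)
The plan is to invoke the Buchsbaum--Eisenbud structure theorem for codimension three Gorenstein ideals \cite{BuEi} in its local form, and then extract the bound from a Pfaffian degree count. Let $A=R/I$ be a codimension three local AG algebra with $R$ a regular local ring and maximal ideal $\maxR$. By Buchsbaum--Eisenbud, $I$ admits a minimal free resolution
\[
0\longrightarrow R\longrightarrow R^{\,2m+1}\stackrel{\Phi}{\longrightarrow} R^{\,2m+1}\longrightarrow R\longrightarrow R/I\longrightarrow 0,
\]
where $\Phi$ is a skew-symmetric matrix, and $I$ is minimally generated by the $2m$-Pfaffians $\mathrm{Pf}_1(\Phi),\ldots,\mathrm{Pf}_{2m+1}(\Phi)$ obtained by deleting the $i$-th row and column of $\Phi$. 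In particular $\mu(I)=2m+1$.

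Next, because the resolution is minimal, each entry $\Phi_{ij}$ of $\Phi$ lies in $\maxR$: otherwise one of the maps could be split off and we could shrink the resolution. So $\Phi$ is a skew-symmetric matrix with entries in $\maxR$.

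The key combinatorial observation is that each $2m\times 2m$ Pfaffian $\mathrm{Pf}_i(\Phi)$ is a homogeneous polynomial of degree $m$ in the entries $\Phi_{jk}$; explicitly,
\[
\mathrm{Pf}_i(\Phi)=\sum_{\sigma}\mathrm{sgn}(\sigma)\,\Phi_{\sigma(1)\sigma(2)}\Phi_{\sigma(3)\sigma(4)}\cdots\Phi_{\sigma(2m-1)\sigma(2m)},
\]
the sum ranging over matchings of $\{1,\ldots,2m+1\}\setminus\{i\}$. Since each factor $\Phi_{\sigma(2k-1)\sigma(2k)}$ is in $\maxR$, each monomial lies in $\maxR^{\,m}$, hence $\mathrm{Pf}_i(\Phi)\in\maxR^{\,m}$ for every $i$. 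Therefore every minimal generator of $I$ has order at least $m$, which forces
\[
\nu(I)\ \geq\ m.
\]

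Combining this with $\mu(I)=2m+1$ yields
\[
\mu(I)=2m+1\ \leq\ 2\,\nu(I)+1,
\]
which is the desired bound. The only real subtlety is justifying the minimal-resolution description of $I$ in the local (non-graded) setting and the fact that entries of $\Phi$ must lie in $\maxR$; both are standard consequences of Nakayama once one invokes Buchsbaum--Eisenbud for Noetherian local Gorenstein ideals of codimension three, so no additional technical work should be needed.
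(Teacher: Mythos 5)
Your proof is correct and takes essentially the same route as the paper's: both use the Buchsbaum--Eisenbud structure theorem to write the minimal generators of $I$ as the maximal Pfaffians of a $(2m+1)\times(2m+1)$ skew-symmetric matrix whose entries lie in the maximal ideal, conclude that each Pfaffian has order at least $m$ so that $\nu(I)\ge m$, and deduce $\mu(I)=2m+1\le 2\nu(I)+1$. Your explicit matching expansion of the Pfaffian merely spells out the paper's remark that each Pfaffian (the square root of a $2m\times 2m$ minor of order at least $2m$) has order at least $m$.
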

\begin{proof} The $v$ generators of $I$ are the Pfaffians - square roots of diagonal minors - of an antisymmetric $v\times v$ matrix $M$ with entries in $R$, where $v=2k+1$ is odd.  We can assume that no entry of $M$ is a unit. Then, every $2k\times 2k$ minor has order at least  $2k$, and each Pfaffian has order at least $k$ so $\nu(I)\ge k$. So $v=2k+1\le 2\nu(I)+1$.
\end{proof}
\end{question}
\begin{question} (Inspired by question of M. Rossi on Dec. 29, 2020 at IIT Bombay seminar talk.)\par
(a) what are the set of Hilbert functions of codimension three complete intersections?\par
(b) can we use Jordan type to help us with this?\par
(c) Do we have families $CI(T)$ (complete intersections of Hilbert function $T$) in codimension three having several irreducible components?\par
(d) Are there codimension three Gorenstein local algebras that cannot be deformed to a complete intersection local algebra? All such Gorenstein algebras can be smoothed, this asks about deformations concentrated at a point.
\end{question}
\begin{question} The concept of non-ubiquity, introduced in \cite{IM}, is defined at the start of Section \ref{realizablesec}. Examples of partial non-ubiquity are in \cite{IM}: there we impose further conditions on the dual generators $F$ for $\mathcal D_{\le k}$.
It remains to determine, in the lowest codimension possible, a non-ubiquitous symmetric decomposition of some Gorenstein sequence $H$.
\end{question}
\begin{question}\label{Jordanordertypeques} We can define Jordan type of an Artinian algebra  $A$ with respect to any linear form $\ell$. We can define a Jordan degree type (specify the initial degrees of each string) for a pair $(A,\ell)$ provided $A$ is graded \cite[\S 2.6]{IMM1}. This could be extended to a local algebra $A$ provided each $\ell$-string has successive terms (beads) whose orders are in adjacent degrees: but this simple behavior does not in general occur (see also Question \ref{Jordanbasisques}). Some of the issues are seen in our Example \ref{2to3ex}. We can of course define JDT for $A^\ast$ and for each $Q(a)$. We discuss these and a sequential Jordan type (let $A=R/I$ consider $R/(I+\m^i)$ for each positive integer $i$) in a work in progress with J.~Steinmeyer \cite{IMS}.
\end{question}

As mentioned, very little is known about the poset $\mathfrak D(H)$ of realizable symmetric decompositions of a Gorenstein sequence $H$ of codimension three or more.  In \cite[\S 5]{I1} an attempt is made to write a complete list for codimension three, for length $|H|\le 21$; and for higher codimension for length $n\le16$.
\begin{question} Can we find an example of $P_A\not=P_{A^\ast}$, using $P_{A^\ast}\le  P_c(H)$ (Definition~\ref{def:relativeLef}i. and Lemma \ref{contiglem})?
\end{question}
 For such an example we would want $H(0)$ to be non-unimodal: this in practice requires codimension of $Q(0)$ to be at least five, as graded Gorenstein sequences of codimension three are unimodal, and the existence of non-unimodal graded Gorenstein sequences is open in codimension four. This may be related to the open question of whether we can find a strong-Lefschetz AG algebra, such that $Q(0)$ is not strong-Lefschetz (Question \ref{Steinques}a,).\par 
\begin{question} Can we understand better the jumping behavior of the order within strings of non-graded AG algebras? Given, say, $\mathcal D(A)$. Can we find an AG algebra for which $P_{A,\ell}>P_c(H)$? This is a weaker ask than determining a strong Lefschetz AG algebra $A$ such that $A^*$ is not SL (Question \ref{Steinques}b) but is also open.
\end{question}
The first part (a) of the next Example illustrates a jumping behavior that is forced by the combination of $\mathcal D(A)$ and the generic Jordan type.  Later examples (b),(c) are related to comparing $P_c(H), P_c(\D)$ (Definition \ref{def:relativeLef}) to generic Jordan types of an AG algebra and its associated graded algebra. \par\vskip 0.2cm\noindent
{\bf Question}. Can we find a non-unimodal $H$ and an algebra $K$ such that $P_{K,\ell}=P_c(H)$ and $Q_K(0)$ is strong Lefschetz?\par
Choosing the highest degree term of the dual generator in ${\sf k}[X,Y]$, as we do in the next Example, guarantees that
$Q_K(0)$ is strong Lefschetz. However we do not resolve the Question.
\begin{example}[Jumping behavior]\label{jumpex}\par
(a). Let  $R={\kk}[x,y,z,w]$, $H_1=(1,4,3,4,2,1)$, 
$F=X^5+Y^5+(X+Y)^5+Z(X^2Y-XY^2)+W^2$.\footnote{This $F$ was introduced in \cite[p. 100]{I2} to show that $H_1=(1,4,3,4,2,1)$ is a Gorenstein sequence; there, it is also shown that $H_1$ has the unique symmetric decomposition $\D_1$.} Then $F\in \Gor(\D_1),$
\[
\D_1=\bigl(H(0)=(1,2,3,3,2,1), H(1)=(0,1,0,1,0), H(3)=(0,1,0)\bigr).
\]
Let $\ell=x+y+z+w$. Then $P_c(H)=(6,4,3,1,1)$ but $A=R/\Ann F$ satisfies
$$P_c(H)>P_{A,\ell}=(6,4,2^2,1) >P_{A^\ast,\ell}=(6,4,2,1^3)=P_c(\D_1).$$
 We 
have $I=( zw, yw, xw, z^2, w^3, x^2z+xyz+y^2z, 2xz+yz-2x^2y-2xy^2+3y^3, 
xz+yz-x^3+y^3, w^2+xy^2z,2xyz+3y^2z+y^4)$, and $I^\ast=(w^2, zw, yw, xw, 
z^2, yz, xz, xy^3-x^2y^2, x^3y-xy^3, x^4-3xy^3+y^4)$.\par
We now show that a balancing jump in the order of beads on one of the strings is forced by the 
$2^2$ in $P_{A,\ell}$. The following is a pre-Jordan basis for 
multiplication by $\ell$, compatible with the Hilbert function (strings 
are arranged in rows; columns correspond to orders, Definition~\ref{orderdef}):
\[
\begin{matrix}
\mathrm{order} &0 &1 &2 &3 &4 &5 \\
\hline
S_1 &1 &\ell &\ell^2 &\ell^3 &\ell^4 &\ell^5\\
S_2 & &x &\ell x &\ell^2x &\ell^3x&\\
S_3 & &z & &\ell z \\
S_4 & & &y^2 &\ell y^2 \\
S_5 & &w
\end{matrix}
\]
This pre-Jordan basis cannot be partitioned into bases for each 
$Q(a)_i$, but the symmetric decomposition helps to explain the jump in 
order that we see in $S_3$. Note that $z$ has order one, but since the 
partial ${z\circ F=X^2Y-XY^2}$ has degree $3$, we have that 
${z\in\m\cap(0:\m^4)}$ and it represents a non-zero class in $Q(1)_1$. 
Naturally, ${\ell z\in\m^2\cap(0:\m^3)}$ (see Remark \ref{multrem}). But 
since $Q(1)_2$ is zero, this forces $\ell z$ to belong to 
${\m^3\cap(0:\m^3)+\m^2\cap(0:\m^2)}$. In fact, ${z\ell=xz+yz=x^3-y^3}$, 
making it an element of ${\m^3\cap(0:\m^3)}$, and we can check that it 
represents a non-zero class in $Q(0)_3$. As we mentioned in Remark 
\ref{multrem}, that $z$ represents a non-zero class in $Q(1)_1$ 
means that $\ell z$ cannot represent a non-zero class in $Q(0)_2$ -- if 
it goes to an earlier $a$ in the $Q(a)$ decomposition, it must go to a 
later order.\par In $S_4$, we may observe, from $y^2$ in $Q(0)_2$ a jump in $\ell y^2$ to a later $Q(1)_3$: note that $y^2$ represents a non-zero class in $Q(0)_2$. Now the class of $\ell y^2$ is also non-zero in $Q(0)_3$, but this space is already spanned by the 
classes of the beads ${\ell^3, \ell^2x, \ell z}$ of previous strings, and if we mod out by these elements, we obtain that $\ell y^2$ induces a non-zero class in $Q(1)_3$. 
\par
(b). Let  $R={\kk}[x,y,z,w]$, $H_2=(1,4,5,4,5,2,1), G=X^3Y^3+X^4Z+Y^4W$,  then $B=R/\Ann B$ satisfies $B\in \Gor(\D_2)$,
$$\D_2=(H(0)=(1,2,3,4, 3,2,1), H(1)=(0,2,0,0,2,0), H(2)=(0,0,2,0).$$
We have $P_c(H)=(7,5,4,4,1,1), P_c(\D_2)=(7,5,3,1^7)$. For $\ell=(x+y+z+w)$ we have
 $$P_c(H)>P_{B,\ell}=(7,5,3,3,3,1)> P_{B^\ast,\ell}=(7,5,3,2^3,1)>P_c(\D_2).$$ 
 Note that adding further degree 4 terms to $G$ does not add to $H(1)$ as we already have filled $\langle X^4,Y^4\rangle =k[X,Y]_4\mod R_2\circ G_6$.\footnote{This is similar to \cite[Example 1.34]{IM} with Hilbert function $H=(1,3,5,4,4,2,1)$, decomposition $$D_2^\prime=\left(H(0)=(1,2,3,4,3,2,1), H(1)=(0,1,0,0,1), H(2)=(0,0,2,0)\right),$$ and dual generator $G^\prime=X^3Y^3+Z(X^4+Y^4)$, $I_{G^\prime}=\Ann G^\prime=(z^2, xyz, x^2z-xy^3, y^2z-x^3y, x^4 -y^4), I_{G^\prime}^\ast = (z^2, xyz, x^2z, y^2z, x^4 -y^4, xy^4, yx^4)$, where $H^\vee=(7,5,4, 3,1)$, 
 $B^\prime={\sf k}\{x,y,z\}/I_{G^\prime}$, and
 for $\ell=x+y+z$  we have
 $H^\vee>P_{B^\prime,\ell}=(7,5,3,3,1,1)>P_{{B^\prime}^\ast,\ell}=(7,5,3,2,1,1,1)> P_c(\D_2^\prime)=(7,5,3,1^5).$ Here in (b), adding a term $Y^4W$ for $G$ in place of $Y^4Z$ of $G^\prime$,  we add $(0,1,0,0,1)$ to $H_B(1)=(0,1,0,0,1)$, making a nonunimodal $H(B^\prime)$. The possible structure of $\D_2$ when there are $Z,W$-linear terms in $G$ is described in \cite[Lemma 3.33]{IM}.}\par
(c) Let  $g=X^3Y^3+X^4Z+Y^4W+WZ^2X$: here  $H=(1,4,7,4,5,2,1), \D_g=\left(H(0)=(1,2,3,4,3,2,1), H(1)=(0,2,0,0,2),H(2)=(0,0,4,0)\right)$ and $E=R/\Ann g$ satisfies $I_E=( w^2 ,yz, xyw ,x^2w, z^3, yw+zw-x^3, xz^2-y^4, x^2z-xy^3)$, $I_E^\ast= (w^2 ,yw+zw ,yz, xzw, x^2w, z^3, xz^2 , x^2z, y^5, xy^4, x^4y, x^5)$. Here $P_c(H)=(7,5,4^2,1^4),\, P_c(\D_g)=(7,5,3,1^9)$ and $$P_c(H)> P_E= (7,5,3^3,1^3)>P_{E^\ast}=(7, 5, 3,2^3,1^3)>P_c(\D_g).$$  This is of interest because of a jump implied by $P_E=(7,5,3^3,\ldots)$: the $3$ strings for $\ell=x+y+z+w$ have orders, $(1,2,3), (1,2,4)$ and $(2,3,4)$. 
\end{example}
\begin{question}\label{posetDques}  Given the (non-symmetric) Gorenstein sequence $H$ \begin{enumerate}[(i).]
\item  What is the cardinality of $\mathfrak D(H)$? 
\item Is $\Gor(\D)$ irreducible, say in codimension three? What is the Zariski-closure $\overline{\Gor(\D)}$ in $\Gor(H)$?
\item  For which $H$ is $\mathfrak D(H)$ simply ordered?
\item There is a stratification on $\Gor(H)$ given by the Jordan types $P_{A,\ell}$ of $A$ with respect to a generic linear form $\ell$.  What can we say about the intersections of the two stratifications? 
\item In particular, if $\Gor(\D)$ is irreducible it has a generic Jordan type. Suppose two symmetric components
$\Gor(\D_1), \Gor(\D_2)$ are irreducible and $\D_1> \D_2$, does the Jordan type of $\D_2$ always dominate that of $\D_1$?
\end{enumerate}
\end{question}
We remind the reader of Question \ref{concordques}, which asks if there are examples of codimension three Hilbert functions having two realizable symmetric decompositions, such that $\Gor(H)$ does \emph{not} have several irreducible components corresponding to them.  And Question \ref{unboundedques}, which asks about estimating the number of irreducible components of $\Gor(H)$ in terms of invariants of $H$.

\begin{ack} We are grateful to Chris McDaniel for his comments, questions, and an examples relevant to issues in defining and analogue of Jordan degree type for non-graded Artin algebras. We thank Steven Kleiman for help with references for the flatness result Lemma \ref{flatlem}. We thank Shreedevi Masuti for comments about the realization problem. We thank Johanna Steinmeyer for her reading an early version, and her very helpful suggestions and comments.\par
 The authors are grateful to the series of annual conferences or workshops on Lefschetz Properties In Algebra and Combinatorics, in particular recent meetings at Levico (2018), CIRM Luminy (2019), and Oberwolfach (2020).

The second author was 
partially supported by CIMA -- Centro de Investiga\c{c}\~{a}o em 
Matem\'{a}tica e Aplica\c{c}\~{o}es, Universidade de \'{E}vora, project 
UIDB/04674/2020 (Funda\c{c}\~{a}o para a Ci\^{e}ncia e Tecnologia).
\end{ack}
\addcontentsline{toc}{section}{References.}
\footnotesize

\end{document}